\let\openright=\clearpage
\def\ThesisTitle{Combinatorial Properties of Metrically Homogeneous Graphs}
\def\ThesisAuthor{Matěj Konečný}
\def\YearSubmitted{2018}
\def\Department{Department of Applied Mathematics}
\def\DeptType{Department}
\def\Supervisor{Mgr. Jan Hubi\v cka, Ph.D.}
\def\SupervisorsDepartment{Department of Applied Mathematics}
\def\StudyProgramme{Computer Science}
\def\StudyBranch{General Computer Science}
\def\Dedication{%
I would like to thank all the DocCourse people for the fun we had studying the metrically homogeneous graphs, both Honza Hubička and Professor Nešetřil for all the help and support they provided me and Honza for allowing me to use some of his pictures in this thesis. I would also like to thank my girlfriend Inka for her sympathy and all the kind encouragements. And last but not least, I would like to thank the Intel\textsuperscript{®} HD Graphics integrated graphic card for not being capable of running any fun games and thus granting me enough time to write this thesis.
}
\def\Abstract{%
Ramsey theory looks for regularities in large objects. Model theory studies algebraic structures as models of theories. The structural Ramsey theory combines these two fields and is concerned with Ramsey-type questions about certain model-theoretic structures. In 2005, Ne\v set\v ril initiated a systematic study of the so-called \emph{Ramsey classes of finite structures}. This thesis is a contribution to the programme; we find Ramsey expansions of the primitive 3-constrained classes from Cherlin's catalogue of metrically homogeneous graphs. A key ingradient is an explicit combinatorial algorithm to fill-in the missing distances in edge-labelled graphs to obtain structures from Cherlin's classes. This algorithm also implies the \emph{extension property for partial automorphisms} (\emph{EPPA}), another combinatorial property of classes of finite structures.
}
\def\Keywords{%
{Metric space}, {Ramsey theory}, {Homogeneous structure}, {Ramsey expansion}
}
\def\@makechapterhead#1{
  {\parindent \z@ \raggedright \normalfont
   \Huge\bfseries \thechapter. #1
   \par\nobreak
   \vskip 20\p@
}}
\def\@makeschapterhead#1{
  {\parindent \z@ \raggedright \normalfont
   \Huge\bfseries #1
   \par\nobreak
   \vskip 20\p@
}}
\theoremstyle{plain}
\newtheorem{theorem}{Theorem}
\numberwithin{theorem}{chapter}
\newtheorem{lemma}[theorem]{Lemma}
\newtheorem{prop}[theorem]{Proposition}
\newtheorem{corollary}[theorem]{Corollary}
\newtheorem{observation}[theorem]{Observation}
\theoremstyle{definition}
\newtheorem{definition}[theorem]{Definition}
\newtheorem{example}[theorem]{Example}
\newtheorem{question}{Question}
\theoremstyle{remark}
\newtheorem*{remark*}{Remark}
\newcommand{\Forb}{\mathop{\mathrm{Forb}}\nolimits}
\newcommand{\str}[1]{\mathbf{#1}}
\def\Fraisse{Fra\"{\i}ss\' e}
\def\rel#1#2{R_{\mathbf{#1}}^{#2}}
\newcommand{\arity}[1]{a(\rel{}{#1})}
\def\Age{\mathop{\mathrm{Age}}\nolimits}
\newcommand{\overbar}[1]{\mkern 1.5mu\overline{\mkern-1.5mu#1\mkern-1.5mu}\mkern 1.5mu}
\def\Aclass{\mathcal A^\delta_{K_1,K_2,C_0,C_1}}
\begin{document}


\pagestyle{empty}
\begin{center}


\vspace{-8mm}
\vfill

{\bf\Large BACHELOR THESIS}

\vfill

{\LARGE\ThesisAuthor}

\vspace{15mm}

{\LARGE\bfseries\ThesisTitle}

\vfill

\Department

\vfill

\begin{tabular}{rl}

Supervisor of the bachelor thesis: & \Supervisor \\
\noalign{\vspace{2mm}}
Study programme: & \StudyProgramme \\
\noalign{\vspace{2mm}}
Study branch: & \StudyBranch \\
\end{tabular}

\vfill

Prague \YearSubmitted

\end{center}

\newpage



\openright
\pagestyle{plain}
\pagenumbering{roman}
\vglue 0pt plus 1fill

\noindent
I declare that I carried out this bachelor thesis independently, and only with the cited
sources, literature and other professional sources.

\medskip\noindent
I understand that my work relates to the rights and obligations under the Act No.~121/2000 Sb.,
the Copyright Act, as amended, in particular the fact that the Charles
University has the right to conclude a license agreement on the use of this
work as a school work pursuant to Section 60 subsection 1 of the Copyright Act.

\vspace{10mm}

\hbox{\hbox to 0.5\hsize{%
In ........ date ............	
\hss}\hbox to 0.5\hsize{%
signature of the author
\hss}}

\vspace{20mm}
\newpage


\openright

\noindent
\Dedication

\newpage


\openright

\vbox to 0.5\vsize{
\setlength\parindent{0mm}
\setlength\parskip{5mm}

Title:
\ThesisTitle

Author:
\ThesisAuthor

\DeptType:
\Department

Supervisor:
\Supervisor, \SupervisorsDepartment

Abstract:
\Abstract

Keywords:
\Keywords

\vss}

\newpage

\openright
\pagestyle{plain}
\pagenumbering{arabic}
\setcounter{page}{1}


\tableofcontents

\chapter*{Introduction}
\addcontentsline{toc}{chapter}{Introduction}

A metrically homogeneous graph is a countable graph which gives rise to a homogeneous metric space when one computes the distances between all vertices. Cherlin~\cite{Cherlin2013} recently gave a catalogue of such graphs in terms of classes of finite subspaces of the metric spaces. In this thesis, we shall study the following question: ``Given a graph $G$ with edges labelled by $1,2,\ldots,\delta$, when is it possible to add the remaining edges and their labels such that the resulting structure, understood as a metric space, belongs to one of Cherlin's classes?'' Although the question might sound somewhat arbitrary, it is motivated by (and important for) applications in Ramsey theory and combinatorial model theory in general.

This thesis is based on joint work~\cite{Aranda2017a, Aranda2017c, Aranda2017} with Andr\'es Aranda, David Bradley-Williams, Eng Keat Hng, Jan Hubi\v{c}ka, Miltiadis Karamanlis, Michael Kompatscher and Micheal Pawliuk, which was done during and after the Ramsey DocCourse 2016 programme. Some of the results were also obtained independently by Rebecca Coulson in her PhD thesis~\cite{Coulson}. Ramsey expansion of the case $(\delta, K_1, K_2, C_0, C_1) = (3, 1, 3, 8, 9)$ was obtained by Soki\'c~\cite{Sokic2017}.

\section*{Organization of the thesis}
In Chapter~\ref{ch:history}, we briefly review the history of Ramsey theory and of the study of homogeneous structures. In Chapter~\ref{ch:catalogue} we present the relevant parts of Cherlin's catalogue of metrically homogeneous graphs, namely the primitive 3-constrained cases. Then, in Chapter~\ref{ch:preliminaries}, we define all necessary notions and present the results of~\cite{Hubicka2016} on multiamalgamation classes, which we need in Chapter~\ref{ch:ramseyeppa}.

Chapter~\ref{ch:metricspaces}, which serves as a warm-up, is concerned with the Ramsey property of finite metric spaces with distances $\{0,1,\ldots, n\}$ for some positive integer $n$. It illustrates how one uses Theorem~\ref{thm:hn} to prove the Ramsey property. In Chapter~\ref{ch:magiccompletion} we define a procedure to fill in the missing distances in partial metric spaces and prove that it produces a metric space from Cherlin's catalogue whenever it is possible. This result is then used in Chapter~\ref{ch:ramseyeppa} to find Ramsey expansions and prove the so-called EPPA of metrically homogeneous graphs.

\section*{Outline of the results}
We only present the necessary notions very briefly, they will be defined properly in the following chapters.

A structure is \emph{homogeneous} if every isomorphism between its finite substructures extends to an automorphism of the whole structure (see Section~\ref{sec:homog}).

A metric space with distances from $\{0,1,\ldots, \delta\}$, $\delta\geq 1$, is called a \emph{$\delta$-valued metric space}. One can view a $\delta$-valued metric space as a complete graph with edges labelled by $\{1,2,\ldots,\delta\}$ (a \emph{$\delta$-edge-labelled complete graph}) such that it does not contain any \emph{non-metric triangle}, that is, a triangle with edges labelled $a,b,c$ such that $a>b+c$.

A metrically homogeneous graph is a countable connected graph such that if one computes the distances between each two vertices, the resulting structure is then homogeneous as a metric space. Cherlin~\cite{Cherlin2013} (over 600 pages) gave a catalogue of metrically homogeneous graphs. A significant and key part of it are the primitive 3-constrained classes, which are homogeneous $\delta$-valued metric spaces such that some other triangles are also forbidden. Informally, the other forbidden triangles come from certain families of triangles of small odd perimeter, triangles of odd perimeter where two edges are much longer than the third one and triangles of large perimeter.

Let $\mathcal C$ be a class of $\delta$-valued metric spaces (for example one of Cherlin's classes) and let $\str G = (V, E, d)$ be an \emph{$\delta$-edge-labelled graph}, i.e. $(V,E)$ is a graph and $d$ is a function from $E$ to $\{1,2,\ldots,\delta\}$. Then we say that $\str G' = (V, {V\choose 2}, d')$ is a \emph{completion} of $\str G$ in $\mathcal C$ if $d'$ is a function from ${V\choose 2}$ to $\{1,\ldots,\delta\}$, $d'|_E = d$ and $(V,d')\in \mathcal C$. This says that one can add the missing distances to $\str G$ and get a metric space from $\mathcal C$.

The main theorem of the influential paper by Hubi\v cka and Ne\v set\v ril~\cite{Hubicka2016}, here stated in a weaker form as Theorem~\ref{thm:hn}, says very roughly the following: If one can characterize the $\delta$-edge-labelled graphs which admit a completion into $\mathcal C$ by a finite set of forbidden homomorphic images, then $\mathcal C$ is Ramsey. (What it means to have the Ramsey property, or be Ramsey, is defined in Chapter~\ref{ch:history}.)

For standard $\delta$-valued metric spaces one can use the \emph{shortest path completion}, which sets the distance between each two vertices to be the minimum over the lengths of all paths connecting the two vertices or $\delta$ if there are no such paths or all paths are longer that $\delta$. One can prove that the shortest path completion of a graph is a metric space if and only if the graph contains no (homomorphic image of a) non-metric cycle, i.e. a cycle with edges of lengths $\ell, a_1,\ldots,a_k$ with $\ell > \sum_{i=1}^k a_i$. As there are only finitely many such cycles --- their number is a function of $\delta$ --- Theorem~\ref{thm:hn} can then be used to prove that the class of all linearly ordered $\delta$-valued metric spaces has the Ramsey property.\footnote{Why one needs a linear order will become clear from the precise statement of Theorem~\ref{thm:hn}.}

For Cherlin's spaces, the nature of the argument is similar, but technically much more challenging. One could try to use the shortest path completion for all Cherlin's spaces. This, however, does not work as the shortest path completion sets each distance to be as large as possible and in Cherlin's spaces triangles with long perimeter are forbidden. To overcome this, instead of seting each distance to be as large as possible, our completion procedure (called the \emph{magic completion}) sets each distance as close as possible to some suitably chosen \textit{magic parameter} $M$ while satisfying some local constraints.

We first need to prove that the magic completion is correct (that is, produces a metric space in a given Cherlin's class whenever it is possible). The argument is much more difficult than for the shortest path completion. To be able to carry it out, we introduce a refined version of the magic completion called the \emph{magic completion algorithm} which works as follows:

First, given a Cherlin's class and a magic parameter $M$, we define an operation $\oplus$ on the set $\{1,\ldots,\delta\}$ describing that the magic completion algorithm will use the distance $a\oplus b$ to complete the \emph{fork} $a,b$ (i.e. an induced path of length $2$ with distances $a$ and $b$). The operation $\oplus$ will satisfy (and up to some special cases will be defined by) the property that $a\oplus b$ is the distance closest to $M$ such that the triangle with distances $a,b,a\oplus b$ is allowed.

Then we define a permutation $\pi_M$ of the set $\{1, \ldots, \delta\}$ such that $\pi_\delta = M$. The magic completion algorithm has $\delta$ stages, in stage $i$ it looks at all forks and if the $\oplus$-sum of some fork is $\pi_i$, it sets the missing distance in the fork to $\pi_i$. Notice that for $a\oplus b = \min(\delta, a+b)$ and $\pi_i = i$ the magic completion algorithm gives precisely the shortest path completion.

Having this step-by-step completion algorithm, we will then check that whenever it produces a forbidden triangle, then the input graph cannot be completed into the given class. It is a lot of technical work and inequality checking --- because of the way Cherlin's spaces are defined --- and uses some of the nice properties of the magic completion algorithm such as being as close to $M$ as possible for each pair of vertices independently.

Then, using the magic completion algorithm, we prove that the family of forbiden homomorphic images for each Cherlin's class is indeed finite: Take an arbitrary incomplete $\delta$-edge-labelled graph $\str G$ and take its magic completion. Either we get something from the class, or the resulting structure contains a forbidden triangle. Then we backtrack the magic completion algorithm and look for a substructure of $\str G$ which caused the magic completion to produce the forbidden triangle. In each stage, at worst every edge of the structure was added due to a fork, so the number of edges at most doubles with each backtrack-stage. Therefore, every forbidden substructure has at most a bounded number of edges and vertices, hence there are only finitely many of them.

Combining the last paragraph and Theorem~\ref{thm:hn} it follows that for all suitable parameters the class of all linearly ordered Cherlin's metric spaces has the Ramsey property.

We also prove that all Cherlin's classes have the \emph{extension property for partial automorphisms} (called \emph{EPPA}, see Definition~\ref{defn:eppa}). It follows from Theorem~\ref{thm:herwiglascar}, the fact that there are only finitely many forbidden homomorphic images and furthermore the fact that the magic completion algorithm is \emph{automorphism preserving} which means that whenever $g$ is an automorphism of an incomplete $\str G$ then $g$ is also an automorphism of the magic completion of $\str G$. Our paper~\cite{Aranda2017} was one of the first instances when the Ramsey property and EPPA were proved simultaneously using essentially the same techniques.
\chapter{History and background}\label{ch:history}
In this thesis, a \emph{relational language} $L$ is a collection $\{\rel{}{i}\}_{i\in I}$ of relational symbols with given arities. As we only work with relational languages, we simply write \emph{language}. An \emph{$L$-structure} $\str M$ is then a tuple $(M, \{\rel{M}{i}\}_{i\in I})$, where $M$ is a set and $\rel{M}{i}$ is an interpretation of $\rel{}{i}$ in $\str M$. Notationally, we shall distinguish structures from their underlying sets by typesetting structures in bold font. When the language $L$ is clear from the context, we will use it implicitly. We use $\arity{}$ to denote the arity of $\rel{}{}$.

Let $\str A$ and $\str B$ be $L$-structures. An \emph{embedding} $f\colon\str A\rightarrow \str B$ is an injection $f\colon A\rightarrow B$ such that for all $\rel{}{} \in L$ and for all $(a_1, \ldots, a_{\arity{}})$ it holds that $(a_1, \ldots, a_{\arity{}}) \in \rel{A}{}$ if and only if $(f(a_1), \ldots, f(a_{\arity{}}))\in \rel{B}{}$, i.e. $f$ preserves all relations and non-relations. An \emph{isomorphism} is a bijective embedding, an \emph{automorphism} is an isomorphism $\str A \rightarrow \str A$.

For $L$-structures $\str A$ and $\str B$ with $A\subseteq B$, we say that $\str A$ is a \emph{substructure} of $\str B$ and denote is as $\str A\subseteq \str B$ if the inclusion is an embedding.

If $f\colon X\rightarrow Z$ is a function and $Y\subseteq X$, we denote by $f|_Y$ the restriction of $f$ on $Y$.

For more on model theory see the textbook by Hodges~\cite{Hodges1993}.

\section{Homogeneous structures}\label{sec:homog}
The main reference for this section is the survey on homogeneous structures by~Macpherson~\cite{Macpherson2011}.

We say that an $L$-structure $\str M$ is \emph{homogeneous} (sometimes called \emph{ultrahomogeneous}) if for every finite $\str A,\str B\subseteq \str M$ and every isomorphism $g\colon  \str A\rightarrow \str B$ there is an automorphism $f$ of $\str M$ with $f|_A = g$.

Let $\mathcal C$ be a class of (not necessarily all) $L$-structures and let $\str M$ be an $L$-structure. We say that $\str M$ is \emph{universal} for $\mathcal C$ if for every $\str C\in \mathcal C$ there exists an embedding $\str C\rightarrow \str M$.

\begin{example}
The structure $(\mathbb Q, <)$, where $<$ is the standard linear order of the rationals, is homogeneous and universal for countable linear orders.

\end{example}

In the early 1950s \Fraisse{}~\cite{Fraisse1953, Fraisse1986} noticed that while $(\mathbb Q, <)$ is homogeneous and universal for countable linear orders, $(\mathbb N, <)$ is neither of those (though it is universal for finite linear orders) and as a (very successful) attempt to extract the necessary properties and generalise this phenomenon he proved Theorem~\ref{thm:fraisse}. The extracted properties are summarised in the following definition.

\begin{definition}[Amalgamation property~\cite{Fraisse1953}]\label{defn:amalgamation}
Fix a language $L$ and let $\mathcal C$ be a non-empty class of finite $L$-structures. We say that $\mathcal C$ is an \emph{amalgamation class} if it has the following properties:
\begin{enumerate}
\item $\mathcal C$ is closed under isomorphisms and substructures;
\item $\mathcal C$ has the \emph{joint embedding property (JEP)}: For all $\str B_1, \str B_2\in \mathcal C$ there is $\str C\in \mathcal C$ and embeddings $\beta_1\colon  \str B_1\rightarrow \str C$ and $\beta_2\colon \str B_2\rightarrow \str C$; and
\item $\mathcal C$ has the \emph{amalgamation property (AP)}: For all $\str A, \str B_1, \str B_2\in \mathcal C$ and embeddings $\alpha_1\colon \str A\rightarrow \str B_1$ and $\alpha_2\colon \str A\rightarrow \str B_2$, there is $\str C\in \mathcal C$ and embeddings $\beta_1\colon  \str B_1\rightarrow \str C$ and $\beta_2\colon \str B_2\rightarrow \str C$ such that $\beta_1\circ\alpha_1 = \beta_2\circ\alpha_2$.
\end{enumerate}
It is common in the area to identify members of $\mathcal C$ with their isomorphism types.

An amalgamation class is a \emph{\Fraisse{} class} if it contains only countably many pairwise non-isomorphic structures (which happens for example when the language is countable).
\end{definition}
The joint embedding property intuitively says that for every two structures in $\mathcal C$ there is a structure in $\mathcal C$ containing both of them. If present in $\mathcal C$, their disjoint union could play such a role. But at the other extreme, if $\str B_1 = \str B_2$ then one can also put $\str C \cong \str B_1$.

The amalgamation property roughly says that if one glues two structures $\str B_1$ and $\str B_2$ over a common substructure $\str A$, there is a structure $\str C\in\mathcal C$ which contains this patchwork, see Figure~\ref{fig:amalgamation}. By definition it is possible that the embeddings of $\str B_1$ and $\str B_2$ in $\str C$ overlap by more vertices than just the vertices of $\str A$ and also that $\str C$ has more vertices than just $\beta_1(B_1)\cup \beta_2(B_2)$.

If $\mathcal C$ contains the empty structure, then AP for $\str A$ being the empty structure is precisely JEP.

\begin{definition}\label{defn:strongamalgamation}
We say that a class has the \emph{strong amalgamation property} (or is a \emph{free amalgamation class}) if it has the amalgamation property as in Definition~\ref{defn:amalgamation} such that $C=\beta_1(B_1)\cup\beta_2(B_2)$ and $\beta_1(x_1) = \beta_2(x_2)$ if and only if $x_1\in\alpha_1(\str A)$ and $x_2\in\alpha_2(\str A)$.
\end{definition}
Intuitively, strong amalgamation means that in the amalgam one only glues over the necessary substructure $\str A$ and nothing more.

Let $\str M$ be an $L$-structure. Then the \emph{age of $\str M$} is defined as $$\Age(\str M) = \left\{\str A\mid \str A\text{ is a \textbf{finite} $L$-structure with an embedding $\alpha\colon \str A\rightarrow \str M$}\right\}.$$
Again, it is common to identify the age with the class of all isomorphism types of structures in the age.

\begin{figure}
\centering
\includegraphics{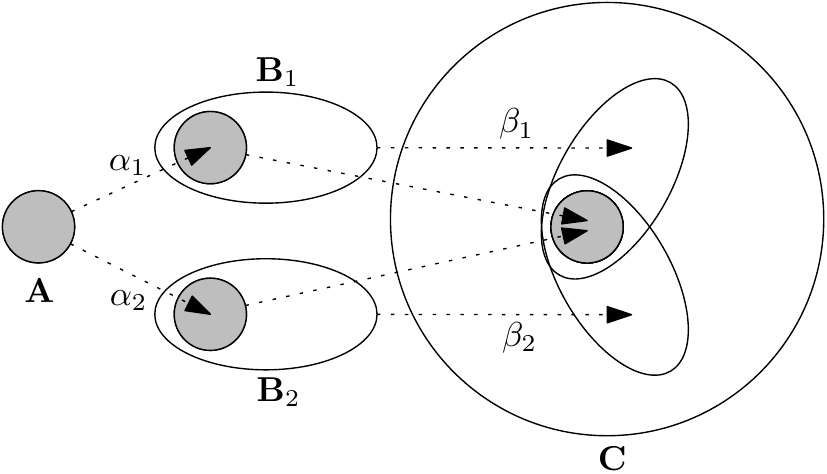}
\caption{An amalgamation of $\str{B}_1$ and $\str{B}_2$ over $\str{A}$.}
\label{fig:amalgamation}
\end{figure}

\begin{theorem}[\Fraisse{}~\cite{Fraisse1953}]\label{thm:fraisse}\leavevmode
\begin{enumerate}
\item Let $\str M$ be a countable homogeneous $L$-structure. Then $\Age\left(\str M\right)$ is a \Fraisse{} class.
\item Let $\mathcal C$ be a \Fraisse{} class. Then there is a countable homogeneous $L$-structure $\str M$ with $\Age(\str M) = \mathcal C$. Furthermore, if $\str N$ is a countable homogeneous $L$-structure with $\Age(\str N) = \mathcal C$, then $\str M$ and $\str N$ are isomorphic.
\end{enumerate}
We call the structure $\str M$ from the second point the \emph{\Fraisse{} limit of $\mathcal C$}. 
\end{theorem}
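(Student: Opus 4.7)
\medskip
\noindent\textbf{Proof plan.} The statement splits naturally into the three assertions (i) age $\Rightarrow$ \Fraisse{} class, (ii) \Fraisse{} class $\Rightarrow$ existence of a homogeneous limit, and (iii) uniqueness of that limit. I would prove them in this order.

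For (i), suppose $\str M$ is countable and homogeneous and let $\mathcal C = \Age(\str M)$. Closure under isomorphisms is built into the definition of $\Age$, and closure under substructures is immediate since a substructure of a finite substructure of $\str M$ is again a finite substructure of $\str M$. Only countably many isomorphism types occur because $M$ is countable, so there are countably many finite subsets. For JEP, given $\str B_1, \str B_2 \in \mathcal C$ fix embeddings $\beta_i\colon \str B_i \to \str M$ and let $\str C$ be the substructure of $\str M$ on $\beta_1(B_1) \cup \beta_2(B_2)$; then $\str C \in \mathcal C$ and absorbs both. For AP this is where homogeneity is used: given $\alpha_i\colon\str A \to \str B_i$ and embeddings $\beta_i\colon \str B_i \to \str M$, the two images $\beta_1\circ\alpha_1(A)$ and $\beta_2\circ\alpha_2(A)$ are isomorphic finite substructures of $\str M$, so homogeneity yields an automorphism $f$ of $\str M$ with $f\circ\beta_1\circ\alpha_1 = \beta_2\circ\alpha_2$; taking $\str C$ to be the substructure of $\str M$ on $(f\circ\beta_1)(B_1) \cup \beta_2(B_2)$ completes the amalgamation.

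For (ii), I would build $\str M$ as the union of an increasing chain $\str M_0 \subseteq \str M_1 \subseteq \cdots$ of members of $\mathcal C$. Using countability of $\mathcal C$ and of each $\str M_i$, I would enumerate in a bookkeeping list all pairs $(\str A \subseteq \str M_i,\ \alpha\colon \str A \to \str B)$ with $\str B\in\mathcal C$, and at each step use AP to amalgamate the next outstanding $\str B$ over $\str A$ into $\str M_i$, producing $\str M_{i+1}\in\mathcal C$; additionally I would use JEP to interleave copies of every isomorphism type in $\mathcal C$. Setting $\str M = \bigcup_i \str M_i$, universality and $\Age(\str M) \supseteq \mathcal C$ are immediate, while $\Age(\str M) \subseteq \mathcal C$ follows because every finite substructure of $\str M$ lies in some $\str M_i \in \mathcal C$ and $\mathcal C$ is substructure-closed. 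Homogeneity is proved by a one-sided back-and-forth: given an isomorphism $g\colon\str A \to \str B$ between finite substructures of $\str M$, enumerate $M = \{m_0, m_1, \ldots\}$ and at odd steps extend the domain to include the next $m_j$ via the amalgamation step built into the construction, at even steps extend the range symmetrically; the union is the desired automorphism.

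For (iii), given two homogeneous structures $\str M, \str N$ with the same age, I would run the genuine two-sided back-and-forth: enumerate $M$ and $N$, and alternately extend a partial isomorphism $g_k\colon \str A_k \to \str B_k$ (with $\str A_k\subseteq \str M$ finite, $\str B_k\subseteq\str N$ finite) by adjoining the next vertex from $M$ or from $N$, each extension step supplied by homogeneity of the opposite structure after noting that $\str A_k$ together with the new vertex embeds into $\str N$ (because its isomorphism type lies in the common age). The union is an isomorphism $\str M\to\str N$.

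The main technical obstacle is (ii): one must arrange the bookkeeping so that \emph{every} amalgamation request raised at any finite stage is eventually honoured, while keeping each $\str M_i$ finite and in $\mathcal C$. This is where a careful dovetailing of enumerations (isomorphism types of $\mathcal C$, finite substructures produced so far, one-point extensions into members of $\mathcal C$) is essential; once the chain is built correctly, both the homogeneity of $\str M$ and the back-and-forth for uniqueness fall out cleanly from iterated applications of AP.
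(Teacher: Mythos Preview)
The paper does not actually prove this theorem: it is stated as the classical result of \Fraisse{} with a citation and no argument, serving purely as background in the historical chapter. Your proposal is the standard textbook proof (back-and-forth for uniqueness, an amalgamation chain with bookkeeping for existence, homogeneity to verify AP for the age), and it is correct as sketched; there is nothing to compare it against in the paper itself.
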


\Fraisse{}'s theorem gives a correspondence between amalgamation classes and homogeneous structures. Besides the age of $(\mathbb Q, <)$, which is the class of all finite linear orders, there are many more known homogeneous structures and amalgamation classes. A prominent example is for example the random graph (often called the Rado graph), which we understand as a structure with one binary relation $E$ whose age is the class of all finite graphs and which is actually universal for all countable graphs. Or the random triangle-free graph, which is the \Fraisse{} limit of all finite triangle-free graphs and again is universal for all countable triangle-free graphs. An example from a very different area is Hall's universal group~\cite{Hall1959} (a nice exposition is in~\cite{Siniora2}) which, of course, doesn't fall into our framework as we only work with relational structures, but the same theory including an appropriate variant of the \Fraisse{} theorem holds for structures in languages containing both relations and functions. Hall's universal group is universal for all countable locally-finite groups and it is a countable homogeneous group, which means that every isomorphism between finite subgroups can be extended to an automorphism of the whole group.

Another example, which is especially interesting to us, is the Urysohn space, which is a homogeneous separable metric space universal for all countable separable metric spaces. It was constructed by Urysohn in~1924~\cite{Urysohn1927}. For a historical and metric-space-theoretical context, see~\cite{HusekUrysohn2008} where the author compares Urysohn's, Hausdorff's and later Kat\v etov's approaches to the problem.

The Urysohn space $\mathbb U$ is constructed as the completion (in the metric space sense) of the rational Urysohn space $\mathbb U_{\mathbb Q}$, which is a homogeneous countable metric space with rational distances which is universal for all finite metric spaces with rational distances. $\mathbb U_{\mathbb Q}$ is constructed by a procedure in principle not very different from what \Fraisse{} used to prove Theorem~\ref{thm:fraisse}. Hence Urysohn was ahead of \Fraisse{} by roughly 30 years and the whole theory should perhaps be called Urysohn-\Fraisse{} theory instead of just \Fraisse{} theory.

We shall study $\mathbb U_{\mathbb Q}$ and its relatives in more detail in Chapter~\ref{ch:metricspaces} which serves as a warm-up for later chapters.

So far we have seen a couple of examples of homogeneous structures, which are homogeneous for ``good reasons''. On the other hand, $(\mathbb N, <, (U_i)_{i\in \mathbb N})$, the structure consisting of natural numbers with the standard order plus infinitely many unary relations such that $U_i^{\mathbb N} = \{i\}$ (each vertex gets its own unary relation), is also homogeneous, but for ``stupid reasons'', namely the complete lack of isomorphisms between substructures.

\subsection{Classification results}
Homogeneous structures are studied from several different perspectives. In this thesis we promote the combinatorial one. Other possible perspective is one of group theory: The automorphism groups of homogeneous structures are very rich (unless --- of course --- the structure is for example $(\mathbb N, <, (U_i)_{i\in \mathbb N})$). There are many results and notions connected to automorphism groups of homogeneous structures and a survey by Cameron~\cite{Cameron1999} serves as a very good starting reference. In model theory, homogeneous structures are studied for example from the stability point of view, see~\cite{Macpherson2011} for details. And last but not least, the automorphism group can be equipped with a natural topology --- the \emph{pointwise-convergence topology} --- and then studied from the point of view of topological dynamics. 

But the initial direction after the \Fraisse{} theorem was on classification. In this section, we briefly and partially overview some classification results as their statements are usually very long (and their proofs all the more so). We start with a theorem of Lachlan and Woodrow on the classification of all countably infinite homogeneous (undirected) graphs.

\begin{theorem}[Classification of countably infinite homogeneous graphs~\cite{Lachlan1980}]\label{thm:lachlanwoodrow}
Let $\str G$ be a countably infinite homogeneous undirected graph. Then $\str G$ or $\overbar{\str G}$ (the complement of $\str G$) is one of the following:
\begin{enumerate}
\item The random graph $\str R$ (i.e. the \Fraisse{} limit of the class of all finite graphs);
\item the \emph{generic} (that is, universal and homogeneous) $K_n$-free graph for some finite clique $K_n$, which is the \Fraisse{} limit of the class of all finite $K_n$-free graphs;	or
\item the disjoint union of complete graphs of the same size (either an infinite union of $K_n$'s for some $n<\infty$, or a finite or infinite union of $K_\omega$'s).
\end{enumerate}
\end{theorem}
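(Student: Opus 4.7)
The plan is to analyse $\mathcal{C} := \Age(\str G)$, which by Theorem~\ref{thm:fraisse} is a \Fraisse{} class of finite graphs whose limit is $\str G$. Graph complementation is a bijection on finite graphs that sends \Fraisse{} classes to \Fraisse{} classes and commutes with taking \Fraisse{} limits, so throughout the argument we may freely replace $(\str G,\mathcal{C})$ by $(\overbar{\str G},\overbar{\mathcal{C}})$. The case split is driven by two small graphs: the induced path $P_3$ on three vertices and its complement $\overbar{P_3}$, which is an edge plus an isolated vertex.

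First I would handle the case $P_3 \notin \mathcal{C}$. This says that whenever $xy$ and $yz$ are edges in $\str G$ then so is $xz$, so the edge relation is an equivalence relation and $\str G$ decomposes as a disjoint union of cliques. Homogeneity of $\str G$ forces all of these cliques to have the same cardinality, which is either a fixed finite $n \geq 1$ (in which case countability of $\str G$ forces infinitely many components) or infinite (in which case any nonzero number of components is consistent with homogeneity). This is precisely alternative~3 of the theorem. If instead $\overbar{P_3}\notin\mathcal{C}$, I would run the same argument on $\overbar{\str G}$.

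The hard case, and the main obstacle of the proof, is $P_3,\overbar{P_3}\in\mathcal{C}$. Set $\omega := \sup\{n : K_n \in \mathcal{C}\}$ and $\alpha := \sup\{n : \overbar{K_n} \in \mathcal{C}\}$; Ramsey's theorem applied to the edge/non-edge $2$-colouring inside $\str G$ rules out $\omega,\alpha$ both finite, so after complementing we may assume either $\omega = \infty$ or ($\omega<\infty$ and $\alpha=\infty$). The heart of the argument is a \emph{classification lemma}: under these hypotheses, if $\omega = \infty$ then $\mathcal{C}$ is the class of all finite graphs, and if $\omega < \infty$ then $\mathcal{C}$ is the class of all finite $K_{\omega+1}$-free graphs. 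I would prove this by induction on the number of vertices of a target graph $\str H$ satisfying the appropriate clique bound: delete a vertex $v$ with prescribed neighbourhood $N$ to obtain $\str H' \in \mathcal{C}$ by induction, and then realise the extension of $\str H'$ by $v$ inside $\mathcal{C}$ by amalgamating $\str H'$ with a ``connector'' structure $\str K \in \mathcal{C}$ carrying $v$ with the correct local pattern over $N$, built by iteratively splicing together the guaranteed building blocks $K_n$, $\overbar{K_n}$, $P_3$ and $\overbar{P_3}$. The subtle point, especially in the $\omega<\infty$ regime, is to engineer the connector and the amalgam so that no forbidden $K_{\omega+1}$ is created; this is where having both $P_3$ and $\overbar{P_3}$ in $\mathcal{C}$, rather than only one of them, is essential, since both allow us to freely toggle individual edges and non-edges at the cost of a single bookkeeping vertex.

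Once the classification lemma is established, the three alternatives read off immediately: the $\omega=\infty$ subcase gives $\mathcal{C}$ as all finite graphs, whose limit is the random graph $\str R$; the $\omega<\infty$ subcase gives $\mathcal{C}$ as all finite $K_{\omega+1}$-free graphs, whose limit is the generic $K_{\omega+1}$-free graph; and the earlier $P_3\notin\mathcal{C}$ case, possibly after complementation, gives the disjoint unions of equal cliques of alternative~3.
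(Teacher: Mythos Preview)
The paper does not prove this theorem at all: it is quoted as a classical result of Lachlan and Woodrow with a citation, in the historical overview, and no argument is given or sketched. So there is no ``paper's own proof'' to compare against; your proposal must be judged on its own.

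Your outline has the right opening moves (the $P_3$/$\overbar{P_3}$ dichotomy and the Ramsey argument that one of the clique/anticlique numbers is infinite), but the ``classification lemma'' step is where essentially the entire difficulty of Lachlan--Woodrow lies, and the sketch you give does not work as stated. The problem is that amalgamation gives you \emph{some} amalgam in $\mathcal C$, not a prescribed one. Concretely: to force $C_4\in\mathcal C$ you would delete a vertex to get $P_3=a\!-\!b\!-\!c$, then try to attach a new vertex $v$ adjacent to $a,c$ and non-adjacent to $b$ by amalgamating with a connector $a\!-\!v\!-\!c$ over $\{a,c\}$. But the amalgamation property is perfectly happy to return the diamond $K_4\setminus e$ (adding the edge $vb$), or even to identify $v$ with $b$; nothing in your argument rules this out, so you have not produced $C_4$. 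The same failure mode recurs at every step of your induction: you control $v$'s edges to $N$ via the connector, but you have no leverage over edges between $v$ and $H'\setminus N$, and the slogan ``toggle individual edges at the cost of a bookkeeping vertex'' does not translate into an actual amalgamation argument.

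The genuine Lachlan--Woodrow proof is substantially more delicate than this and does not proceed by a straightforward one-vertex-at-a-time induction; it involves a careful analysis of which configurations force which others under amalgamation, and the original paper runs to some length. If you want to sketch a proof here you would need either to invoke strong amalgamation (which you have not established and which is in fact part of the conclusion, not a hypothesis), or to reproduce a nontrivial portion of their structural analysis.
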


This theorem implies that there are only countably many countable homogeneous graphs, which is in contrast with an earlier result of Henson~\cite{Henson1972} who found $2^{\aleph_0}$ non-isomorphic countable homogeneous directed graphs. They are analogues of the $K_n$-free graphs, but while forbidding $K_n$ and $K_m$ is the same as forbidding $K_{\min(m, n)}$, Henson is forbidding tournaments and one can construct infinite sets of pairwise \textit{incomparable} tournaments. Cherlin~(\cite{Cherlin1998}), more than 20 year later, then gave a full classification of countably infinite homogeneous directed graphs. The proof takes more than 170 pages and even the list itself is too long and complicated for our small historical overview; it contains for example the much older classification of homogeneous partial orders~(\cite{Schmerl1979}).

There are many more classification results. In Chapter~\ref{ch:catalogue} we present part of Cherlin's catalogue of metrically homogeneous graphs which by its complexity (and the proof length) substantially surpasses his work on homogeneous digraphs.



\section{Ramsey theory}
Surveying the rich history of Ramsey theory could easily be a topic for more than one thesis, but not a very good topic as there are many good references already. Here we only mention the results which are directly relevant to our problem, for more see for example Ne\v set\v ril's Chapter in~\cite[Ch. 25]{Graham1995}. Some of the more recent developments in the structural Ramsey theory have been surveyed for example by Bodirsky~\cite{Bodirsky2015}, Nguyen Van Th\'e~\cite{NVT14} and Solecki~\cite{Solecki2013}.

In~1930 Ramsey published a paper where he proves the folowing theorem (which we state in today's language, by $[n]$ we mean the set $\{0, 1, \ldots, n-1\}$, and, for a set $A$, by ${A\choose p}$ we mean the set of all $p$-elements subsets of $A$):
\begin{theorem}[Ramsey's theorem~\cite{Ramsey1930}]
For every triple of natural numbers $n,p,k$ there is $N$ such that the following holds:

For every colouring $c\colon  {[N]\choose p} \rightarrow [k]$ there is an $n$-element subset $H\in {[N]\choose n}$ such that $c|_{H\choose p}$ is constant.

\end{theorem}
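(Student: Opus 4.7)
The plan is to prove Ramsey's theorem by induction on $p$, the size of the subsets being coloured. For the base case $p = 1$, the statement reduces to the pigeonhole principle: colouring $N = k(n-1)+1$ elements of $[N]$ by $k$ colours forces some colour class to have size at least $n$.

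For the inductive step, assume the theorem for $p-1$ (with all parameters) and fix $n$ and $k$; we want to produce $N = N(n,p,k)$. The idea is to build a sequence $x_1, x_2, \ldots, x_m$ of distinct elements of $[N]$ together with shrinking witness sets $[N] = A_0 \supseteq A_1 \supseteq \cdots \supseteq A_m$ such that $x_{i} \in A_{i-1}$, $A_i \subseteq A_{i-1}\setminus\{x_i\}$, and crucially the following \emph{coherence} property holds: for every $i$ and every $(p-1)$-subset $S \subseteq A_i$, the colour $c(\{x_i\}\cup S)$ depends only on $i$. To guarantee coherence at step $i$, I apply the inductive hypothesis to the auxiliary colouring $c_i\colon \binom{A_{i-1}\setminus\{x_i\}}{p-1} \to [k]$ defined by $c_i(S) = c(\{x_i\}\cup S)$, which yields a monochromatic subset $A_i$ of the desired size.

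Once the sequence is built with $m = k(n-1)+1$, I assign to each $x_i$ the common colour $\gamma_i \in [k]$ produced by its coherence property. A pigeonhole on $[k]$ yields indices $i_1 < \cdots < i_n$ with $\gamma_{i_1} = \cdots = \gamma_{i_n}$, and then every $p$-subset of $H = \{x_{i_1}, \ldots, x_{i_n}\}$, written as $\{x_{i_j}\} \cup S$ with $S \subseteq \{x_{i_{j+1}}, \ldots, x_{i_n}\} \subseteq A_{i_j}$, receives colour $\gamma_{i_j}$; hence $H$ is monochromatic.

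The main obstacle is the bookkeeping that ensures $N$ is finite: at step $i$ one invokes the inductive hypothesis on $A_{i-1}\setminus\{x_i\}$ to obtain $A_i$ of a prescribed size, so working backwards, if $R(q,p-1,k)$ denotes the inductive bound needed to guarantee a monochromatic set of size $q$, one defines recursively $s_0 = 1$ and $s_{t+1} = R(s_t + 1,\, p-1,\, k) + 1$, and sets $N = s_m$ with $m = k(n-1)+1$. This is finite by the inductive hypothesis, which completes the proof; it also explains the tower-type growth of the classical Ramsey numbers, though for the existence statement no such sharpness is needed.
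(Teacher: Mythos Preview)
The paper does not give its own proof of Ramsey's theorem: the statement appears in the historical background chapter with a citation to Ramsey's 1930 paper and is immediately followed by ``There are many Ramsey-like theorems.'' So there is no in-paper argument to compare your proposal against.

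Your argument is the standard induction on $p$ (pigeonhole for $p=1$, then the ``shrinking witness set'' construction for the step), and it is correct. One very small bookkeeping point: with $s_0 = 1$ the final set $A_m$ may have fewer than $p-1$ elements when $p\geq 4$, so the ``common colour'' $\gamma_m$ is not determined by any actual $(p-1)$-subset. This is harmless---either take $s_0 = p-1$ so that every $A_i$ has at least $p-1$ elements, or assign an arbitrary colour to the last few $\gamma_i$ and note that whenever $x_{i_j}$ is the minimum of a $p$-subset of $H$ the remaining $p-1$ elements lie in $A_{i_j}$, so $\gamma_{i_j}$ was genuinely defined. Either fix is a one-line adjustment and does not affect the structure of the proof.
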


There are many Ramsey-like theorems. This thesis is a contribution to the structural Ramsey theory, where instead of finite subsets of $\mathbb N$ one is working with structures and instead of $p$-tuples one is colouring embeddings.

\subsection{Structural Ramsey theory}

\begin{definition}[Ramsey property]
Let $L$ be a finite relational language and $\mathcal C$ be a class of finite $L$-structures. 

For $\str A, \str B\in \mathcal C$ we denote by ${\str B\choose \str A}$ the set of all embeddings $f\colon \str A\rightarrow \str B$.

We say that $\mathcal C$ has the \emph{Ramsey property} (or \emph{is Ramsey}) if for every $\str A, \str B\in \mathcal C$ and every $k\geq 1$ there is $\str C\in \mathcal C$ such that for every colouring $c\colon {\str C\choose \str A}\rightarrow [k]$ there is an embedding $\beta\in {\str C\choose \str B}$ such that for every $\alpha_1, \alpha_2\in {\str B\choose \str A}$ it holds that $c(\beta\circ \alpha_1) = c(\beta\circ \alpha_2)$ (``all copies of $\str A$ in $\str B$ are monochromatic'').
\end{definition}

In~1977 Ne\v set\v ril and R\"odl and independently in~1978 Abramson and Harrington proved the following:
\begin{theorem}[Ne\v set\v ril-R\"odl~\cite{Nevsetvril1977,Nevsetvril1977b}, Abramson-Harrington~\cite{Abramson1978}]\label{thm:nrgraph}
The class of all linearly ordered finite graphs is Ramsey.
\end{theorem}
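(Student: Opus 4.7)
My plan is to prove the theorem via the partite construction of Nešetřil and Rödl, bootstrapping from the case of ordered complete graphs. The base case is immediate from Ramsey's theorem: if $\str{A}$ and $\str{B}$ are complete graphs on $a$ and $b$ vertices with their natural linear orders, then an embedding $\str{A} \hookrightarrow K_N$ (with $K_N$ carrying the natural linear order on $[N]$) is determined by its image, i.e.\ by an element of ${[N]\choose a}$, so Ramsey's theorem applied with parameters $(b,a,k)$ produces the required witness $\str{C} = K_N$.

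For arbitrary ordered graphs, set $a = |A|$ and introduce the notion of an \emph{$a$-partite ordered graph}: a linearly ordered graph whose vertex set is partitioned into convex blocks $X_1 < X_2 < \cdots < X_a$ with edges allowed only between distinct blocks. A \emph{transversal copy} of $\str{A}$ in such a structure is an embedding $f\colon \str{A} \rightarrow \str{P}$ sending the $i$-th vertex of $\str{A}$ (in its order) into $X_i$. The central intermediate step is the \emph{Partite Lemma}: for every finite $a$-partite ordered graph $\str{P}$ and every $k \geq 1$, there exists a finite $a$-partite ordered graph $\str{Q}$ such that any $k$-coloring of the transversal copies of $\str{A}$ in $\str{Q}$ admits a partite-preserving embedding $\str{P} \hookrightarrow \str{Q}$ on which all transversal copies of $\str{A}$ are monochromatic. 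I would prove this lemma by iterating the classical Ramsey theorem once per block, blowing up $X_a$ first to Ramsify over the last coordinate of a transversal, then $X_{a-1}$, and so on.

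Given the Partite Lemma and the base case, I would finish with the \emph{outer partite construction}. Choose $K_N$ Ramsey for complete graphs of size $|B|$ in $k$ colors, and let $\str{P}_0$ be its $a$-partite blow-up: replace each vertex of $K_N$ by an $a$-tuple, one vertex in each part, and connect two vertices iff they lie in different parts and their underlying $K_N$-vertices are distinct. Enumerate the partite copies of $\str{B}$ in $\str{P}_0$ as $B_1, \ldots, B_m$, and inductively construct $\str{P}_0, \str{P}_1, \ldots, \str{P}_m$ so that $\str{P}_{i+1}$ is obtained from $\str{P}_i$ by applying the Partite Lemma to the subpicture of $B_{i+1}$, ensuring that every $k$-coloring of transversal $\str{A}$-copies in $\str{P}_{i+1}$ restricts to a monochromatic coloring on some partite copy of $\str{P}_i$ around $B_{i+1}$. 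The candidate $\str{C}$ is the \emph{shadow} of $\str{P}_m$, namely the ordered graph obtained by forgetting the partition.

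The main obstacle, and the part demanding the most technical care, is the bookkeeping that lets a back-tracking argument turn an arbitrary $k$-coloring of ${\str{C}\choose \str{A}}$ into a monochromatic copy of $\str{B}$. Two subtleties need to be managed: first, an embedding of $\str{A}$ into the shadow of $\str{P}_m$ need not be transversal, so the blow-up $\str{P}_0$ must be chosen large enough that every such embedding factors through a transversal copy inside some $\str{P}_i$; second, one must verify that the linear orders and edge sets remain compatible with the block structure across all amalgamations in the Partite Lemma, since otherwise the iteratively produced $\str{P}_i$ could fail to be ordered graphs. Once these points are pinned down, peeling off the $m$ partite stages and invoking the base case on $K_N$ yields the desired monochromatic copy of $\str{B}$ in $\str{C}$.
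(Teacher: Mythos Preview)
The paper does not give a self-contained proof of this theorem: it is stated as a known result from the literature and then noted to be an immediate special case of the general Ne\v set\v ril--R\"odl theorem (Theorem~\ref{thm:nr}), which is also only cited. So there is no ``paper's proof'' to compare against beyond that one-line deduction.

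Your outline via the partite construction is the right framework and is essentially the original argument, but the description of the picture $\str P_0$ is wrong in a way that breaks the proof. The outer picture must be $N$-partite (one part per vertex of the Ramsey witness $K_N$), not $a$-partite. Concretely, for each $b$-element subset $S=\{s_1<\cdots<s_b\}\subseteq [N]$ one places a fresh disjoint copy of $\str B$ with its $i$-th vertex in part $s_i$; then every transversal copy of $\str A$ in $\str P_0$ (and in each later $\str P_i$) projects to an $a$-subset of $[N]$, i.e.\ to a copy of $K_a$ in $K_N$, and this is what lets the base case fire at the end. Your $a$-partite blow-up of $K_N$ does not work: since $|B|=b$ may exceed $a$, there is no transversal place to put a copy of $\str B$ at all, so ``enumerate the partite copies of $\str B$ in $\str P_0$'' is vacuous; and even transversal copies of $\str A$ fail to exist in general, because in your $\str P_0$ a non-edge between parts $i$ and $j$ forces equal $K_N$-coordinates, which can be inconsistent (e.g.\ if $\str A$ has non-edges $12$ and $23$ but an edge $13$). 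With the correct $N$-partite picture, the Partite Lemma is applied to the $b$ parts supporting each $B_i$ rather than to an $a$-partite object, and the back-tracking you describe then goes through: after the $m$ amalgamation steps the resulting colouring of transversal $\str A$-copies depends only on the underlying $a$-subset of $[N]$, and Ramsey for $K_N$ yields a monochromatic $K_b$, hence a monochromatic copy of $\str B$.
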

Note that the language is $L=\{E, \leq\}$ and the embeddings also have to preserve the order.

The techniques of Ne\v set\v ril and R\"odl actually prove much more. Let $L = \{\rel{}{1}, \rel{}{2}, \ldots, \rel{}{k}\}$ be a finite relational language. We say that an $L$-structure $\str M$ is \emph{irreducible} if for every two vertices $x\neq y \in M$ there is a relation $\rel{}{i}$ and a tuple $(z_1, z_2, \ldots, z_{\arity{i}}) \in \rel{}{i}$ such that $x = z_a$ and $y = z_b$ for some $a,b$. If $L=\{E\}$ is the graph language, then $\str G$ is irreducible if and only if $\str G$ is the complete graph.

Now we state what is known as the Ne\v set\v ril-R\"odl theorem:
\begin{theorem}[Ne\v set\v ril-R\"odl~\cite{Nevsetvril1977,Nevsetvril1977b}]\label{thm:nr}
Let $L$ be a finite relational language and let $\mathcal F$ be a finite collection of irreducible finite $L$-structures. Define $\overrightarrow{\mathcal F}$ to be the collection of all linear orderings of structures from $\mathcal F$. Then the class of all linearly ordered finite $L$-structures $\str M$ such that there is no $\str F\in \overrightarrow{\mathcal F}$ with an embedding $\str F\rightarrow \str M$ is Ramsey.
\end{theorem}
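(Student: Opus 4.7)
The plan is to follow the classical partite construction of Nešetřil and Rödl, which reduces the statement stepwise to Ramsey's theorem itself. The argument has two conceptually separate layers: first an unconstrained ordered structural Ramsey theorem, and then a reduction showing that irreducible forbidden substructures are never created during the construction.

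First, I would establish the unconstrained version, where $\mathcal F = \emptyset$ and the class is simply all linearly ordered finite $L$-structures. I would proceed by induction on $|B|$ for the target structure $\str B$. The base case (when $\str B$ is a single relation-tuple or a single vertex) reduces, via a colour-product argument handling the finitely many symbols of $L$ in parallel, to the classical Ramsey theorem on $p$-element subsets of $[N]$. The inductive step is the partite lemma: given a Ramsey witness $\str C'$ for the structure obtained by deleting the $\leq$-last vertex of $\str B$, one assembles $\str C$ from many fresh copies of an $|B|$-partite template, and then transfers colourings of embeddings $\str A\hookrightarrow\str C$ to colourings of tuples to which Ramsey's theorem applies. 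Linear orderings are essential here, because they pin down which vertex is identified with which part and eliminate the rigid-embedding issues that would otherwise spoil the counting.

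The second layer is where the irreducibility of $\mathcal F$ becomes crucial. My plan is to arrange that every amalgamation used inside the partite construction is \emph{free}, in the sense that whenever two structures $\str B_1, \str B_2$ are glued over a common substructure $\str A$, no relational tuple is placed on a mixed tuple containing vertices from both $B_1\setminus A$ and $B_2\setminus A$. Under this condition I would argue that any irreducible substructure of the amalgam is contained entirely in one of the two factors: if such a substructure had vertices $u\in B_1\setminus A$ and $v\in B_2\setminus A$, then irreducibility would demand a relational tuple witnessing the pair $(u,v)$, contradicting freeness. Consequently, if each factor lies in $\Forb(\overrightarrow{\mathcal F})$, so does the amalgam, and inductively the entire construction stays inside the class.

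The main obstacle is the partite lemma in the unconstrained step for languages of arbitrary arity: one must simultaneously track embeddings of $\str A$ into $\str B$ and the colouring induced on the fresh copies of the last vertex, and the bookkeeping becomes delicate when arities exceed two and many embeddings of $\str A$ cross several parts. Once this lemma is in hand and freeness is maintained throughout, the step from the unconstrained theorem to $\Forb(\overrightarrow{\mathcal F})$ is essentially automatic: the only way the amalgamated Ramsey witness could leave the class would be by acquiring a new forbidden substructure, which the free amalgamation argument rules out.
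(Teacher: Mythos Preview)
The paper does not prove this theorem at all: it is stated as a result of Ne\v set\v ril and R\"odl with a citation and then used as a black box (for instance, inside the proof of Corollary~\ref{cor:metricareramsey} and implicitly behind Theorem~\ref{thm:hn}). So there is no ``paper's own proof'' to compare your proposal against.

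That said, your outline is recognisably the classical partite construction, which is the standard route to this theorem. Two remarks on the sketch itself. First, what you describe as a single ``partite lemma'' is normally split into two pieces: the \emph{partite lemma}, which handles a single $|A|$-partite system and is proved by an iterated free amalgamation (or via the Hales--Jewett theorem), and the \emph{partite construction}, which iterates the lemma over the embeddings of $\str A$ into a pre-existing Ramsey witness. Your inductive-on-$|B|$ framing and ``delete the last vertex'' wording do not quite match either piece; the actual argument does not proceed by induction on $|B|$ in that way. Second, your account of why forbidden irreducible structures are never created is correct in spirit --- free amalgamation confines every irreducible substructure to one factor --- and this is exactly the reason the theorem needs irreducibility. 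If you intend to write this up in full, it would be worth aligning the first layer more closely with the standard two-step partite machinery rather than the vertex-deletion induction you sketched.
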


Theorem~\ref{thm:nrgraph} is a direct consequence of Theorem~\ref{thm:nr}. Theorem~\ref{thm:nr} also implies that the class of ordered $K_n$-free graphs is a Ramsey class. Thanks to the Ne\v set\v ril-R\"odl theorem, we now know a lot of Ramsey classes. We also know that every Ramsey class must consist of rigid structures (otherwise, if a structure $\str A$ had a nontrivial automorphism, we could colour its embedding according to the automorphisms). The following observation of Ne\v set\v ril from~1989 gives (under a mild assumption) a strong necessary condition for Ramsey classes and also connects the Ramsey theory and the theory of homogeneous structures:

\begin{theorem}[Ne\v set\v ril~\cite{Nevsetvril1989a,Nevsetril2005}] \label{thm:ramseyamalg}
Let $\mathcal C$ be a Ramsey class of finite structures with the joint embedding property. Then $\mathcal C$ has the amalgamation property.
\end{theorem}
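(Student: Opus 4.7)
The plan is to chain JEP and a $2$-color instance of the Ramsey property: JEP lets me place $\str B_1$ and $\str B_2$ jointly inside one structure $\str D$, and Ramsey then lets me upgrade $\str D$ to a larger $\str C$ in which a well-chosen ``extension obstruction'' coloring collapses to a single color, which I will arrange to be exactly the color that encodes the existence of the amalgamation I want.

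First, given $\str A, \str B_1, \str B_2 \in \mathcal C$ with embeddings $\alpha_i\colon \str A \to \str B_i$, I would apply JEP to obtain $\str D \in \mathcal C$ with embeddings $\delta_i\colon \str B_i \to \str D$. Set $f_i = \delta_i \circ \alpha_i\colon \str A \to \str D$. If $f_1 = f_2$ then $\str D$ itself (together with $\delta_1, \delta_2$) already amalgamates $\str B_1$ and $\str B_2$ over $\str A$, so from now on I assume $f_1 \neq f_2$; that is, $\str D$ contains two genuinely distinct distinguished copies of $\str A$.

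Second, I apply the Ramsey property to $(\str A, \str D)$ with $k = 2$ colors, obtaining $\str C \in \mathcal C$ with the property that any $\chi\colon {\str C \choose \str A} \to \{0,1\}$ admits an embedding $g\colon \str D \to \str C$ along which $\chi$ is constant on all images $g \circ e$, $e \in {\str D \choose \str A}$. I define $\chi$ by declaring $\chi(e) = 0$ iff there exists an embedding $h\colon \str B_1 \to \str C$ with $h \circ \alpha_1 = e$, and $\chi(e) = 1$ otherwise, and let $g$ be the monochromatic embedding supplied by Ramsey.

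Third, I observe that $\chi(g \circ f_1) = 0$ automatically, since $g \circ \delta_1\colon \str B_1 \to \str C$ realizes the required extension of $g \circ f_1$. Hence the constant color on $g$'s image of ${\str D \choose \str A}$ is $0$, forcing $\chi(g \circ f_2) = 0$. This yields an embedding $h\colon \str B_1 \to \str C$ with $h \circ \alpha_1 = g \circ f_2 = (g \circ \delta_2) \circ \alpha_2$; setting $\beta_1 = h$ and $\beta_2 = g \circ \delta_2$ produces the sought amalgam inside $\str C$.

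The only genuinely creative step — and the one place where the argument could easily be miswired — is the definition of the coloring. Deliberately breaking the symmetry between $\str B_1$ and $\str B_2$ (extendability to $\str B_1$ through $\alpha_1$, not to $\str B_2$) is what guarantees that $\chi(g \circ f_1) = 0$ \emph{for free}, which pins the monochromatic color to $0$ and allows the extension of $g \circ f_2$ through $\str B_1$ to be matched with the ready-made copy $g \circ \delta_2$ of $\str B_2$. I do not expect further technical obstacles: JEP is used only once, Ramsey is used only with two colors, and no additional hypothesis on $\mathcal C$ is required.
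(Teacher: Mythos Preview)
Your proof is correct and follows essentially the same approach as the paper: use JEP to place $\str B_1$ and $\str B_2$ inside a common $\str D$, apply the Ramsey property for $(\str A,\str D)$ with two colors, and color copies of $\str A$ by whether they extend to $\str B_1$ through $\alpha_1$. The only cosmetic difference is that the paper phrases the endgame as a proof by contradiction (assuming no amalgam exists in $\str C$ and deriving two colors inside a monochromatic copy of $\str D$), whereas you argue directly that the monochromatic color must be $0$ and read off the amalgam; the underlying mechanism is identical.
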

\begin{figure}
\centering
\includegraphics{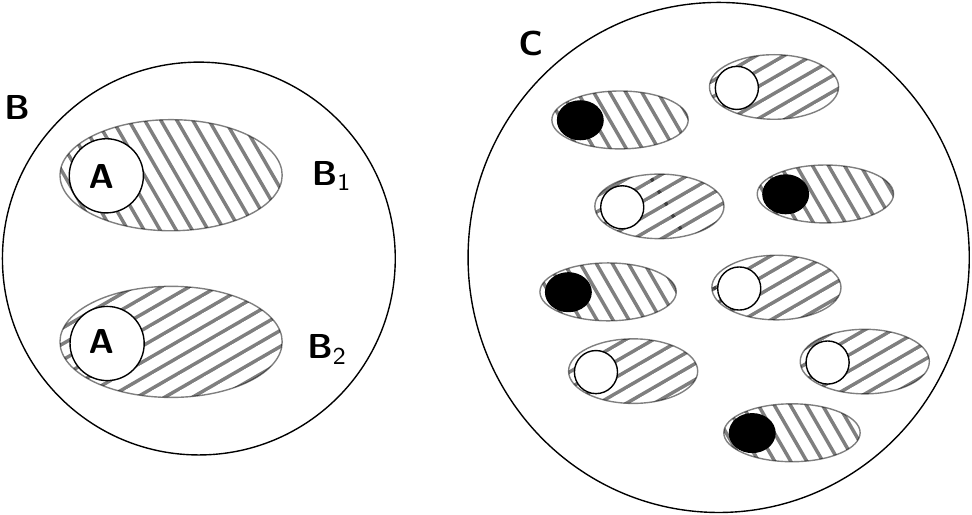}
\caption{Illustration of the proof of Theorem~\ref{thm:ramseyamalg}. Copies of $\str A$ from $\str B_1$ are coloured black, copies of $\str A$ from $\str B_2$ are coloured white.}
\label{fig:ramseyamalg}
\end{figure}
\begin{proof}
We need to show that for every $\str A,\str B_1,\str B_2\in \mathcal C$ and embeddings $\alpha_1\colon  \str A\rightarrow \str B_1$ and $\alpha_2\colon  \str A\rightarrow \str B_2$ there is $\str C\in \mathcal C$ and embeddings $\beta_1\colon  \str B_1\rightarrow \str C$ and $\beta_2\colon  \str B_2\rightarrow \str C$ such that $\beta_1\circ \alpha_1 = \beta_2\circ \alpha_2$.

Let $\str B$ be a joint embedding of $\str B_1$ and $\str B_2$ and take such $\str C\in \mathcal C$ that $\str C \longrightarrow (\str B)^\str A_2$. We will prove that $\str C$ is the amalgam we are looking for.

Assume the contrary which means that there is no embedding $\alpha\colon  \str A\rightarrow \str C$ with the property that there are embeddings $\beta_1\colon  \str B_1\rightarrow \str C$ and $\beta_2\colon  \str B_2\rightarrow \str C$ such that $\beta_i\circ\alpha_i = \alpha$ for $i\in\{1,2\}$. Hence, for every $\alpha\colon  \str A\rightarrow \str C$ there is at most one such embedding $\beta_i\colon  \str B_i\rightarrow \str C$. Define the colouring $c\colon  {\str C\choose \str A} \rightarrow \{0, 1\}$ by letting
$$c(\alpha) = 
 \begin{cases} 
   0 & \text{if there is } \beta_1 \colon  \str B_1\rightarrow \str C\text{ such that }\alpha = \beta_1\circ\alpha_1 \\
   1       & \text{otherwise }.
  \end{cases}$$

For an illustration, see Figure~\ref{fig:ramseyamalg}.

But then, as $\str C \longrightarrow (\str B)^\str A_2$, there is an embedding $\beta \colon  \str B \rightarrow \str C$ such that $c|_{\beta(\str B)}$ is constant. But there are at least two embeddings of $\str A$ into $\beta$ --- one is given by $\alpha_1$ and the other is given by $\alpha_2$. And $\alpha_1$ can be extended to an embedding of $\str B_1$, while $\alpha_2$ can be extended to an embedding of $\str B_2$, hence they got different colours, which is a contradiction.
\end{proof}

\subsection{Expansions}


By the Ne\v set\v ril-R\"odl theorem, the class $\overrightarrow{\mathrm{Gra}}$ of all ordered finite graphs is Ramsey, while one can show that the class $\mathrm{Gra}$ of all finite graphs is not Ramsey. To be able to talk about this difference formally and in general, we need the following definitions.

\begin{definition}[Expansion and reduct]
Let $L$ be a language, let $L^+$ be another language such that $L\subseteq L^+$ (i.e. $L^+$ contains all symbols that $L$ contains and they have the same arities). Then we call $L^+$ an \emph{expansion} of $L$ and we call $L$ a \emph{reduct} of $L^+$.

Let $\str M$ be an $L$-structure and let $\str M^+$ be an $L^+$-structure such that $\str M^+|_L = \str M$ (by this we mean that $\str M$ and $\str M^+$ have the same sets of vertices and the interpretations of symbols from $L$ are exactly the same in both structures). Then we call $\str M^+$ an \emph{expansion} of $\str M$ and we call $\str M$ a \emph{reduct} of $\str M^+$.

If $\mathcal C$ is an amalgamation class of finite $L$-structures, we say that $\mathcal C^+$, an amalgamation class of finite $L^+$-structures is its \emph{expansion} if $\mathcal C = \Age(\str M)$, $\mathcal C^+ = \Age(\str M^+)$ and $\str M^+$ is an expansion of $\str M$.
\end{definition}
\begin{remark*}
In model theory reduct and expansion often mean something more general, but for our purposes this definition is sufficient.
\end{remark*}

Historically, expansions are often called \emph{lifts} in the Ramsey-theoretic context and reducts are called \emph{shadows}. We say that a class has a Ramsey expansion if it has an expansion which is Ramsey.

So far we have only added all linear orders, which is clearly a special expansion (and corresponds to adding the dense linear order with no endpoints to the \Fraisse{} limit which is independent from the rest of the relations). But there are examples where it is not enough.

To sum up, we know that every Ramsey class (with the joint embedding property) has the amalgamation property. Amalgamation classes (of finite structures in a countable language) correspond to homogeneous structures, their \Fraisse{} limits. And as we have seen, by adding some more structure on top of a homogeneous structure and looking at the age, one can get a Ramsey class.

In 2005, Ne\v set\v ril~\cite{Nevsetril2005} started the classification programme of Ramsey classes --- the counterpart of the Lachlan-Cherlin classification programme of homogeneous structures. Its goal is to classify all possible Ramsey classes, a goal quite ambitious, but in some cases achievable; the classification programme of homogeneous structures offers lists of possible Ramsey classes, or rather base classes for expansions. This thesis is a contribution to the Ne\v set\v ril classification programme.

Having read this far into the historical introduction, the reader has probably already asked themselves: Does every amalgamation class have a Ramsey expansion?

The answer to this question is positive, but by cheating: One can add infinitely many unary predicates and let each vertex have its own predicate. Then every structure has at most one embedding to any other and the Ramsey question becomes trivial. There are several ways how to fix the question:
\begin{question}[Bodirsky-Pinsker-Tsankov, 2011~\cite{Bodirsky2011a}]
Does every amalgamation class in a finite language has a Ramsey expansion in a finite language?
\end{question}
This question still remains open. Another possible fix (by Nguyen Van Th\'e) is motivated by topological dynamics and asks whether every amalgamation class with an $\omega$-categorical \Fraisse{} limit has a \textit{precompact} Ramsey expansion. While it is important for the area, it is not necessary for this thesis and we will omit the details. We just note that very recently Evans, Hubi\v cka and Ne\v set\v ril~\cite{Evans2} answered this question negatively.

However, topological dynamics and Nguyen Van Th\'e's results also give a notion of what a ``good'' (or a ``minimal'') expansion is.

\begin{definition}[Expansion property~\cite{The2013}]
Let $\mathcal C$ be a class of finite structures and let $\mathcal C^+$ be its expansion. We say that $\mathcal C^+$ has the \emph{expansion property} (with respect to $\mathcal C$) if for every $\str B\in \mathcal C$ there is $\str C\in \mathcal C$ such that for every $\str B^+\in \mathcal C^+$ and for every $\str C^+\in \mathcal C^+$ such that $\str B^+$ is an expansion of $\str B$ and $\str C^+$ is an expansion of $\str C$ there is an embedding $\str B^+ \rightarrow \str C^+$.
\end{definition}
An expansion has the expansion property if for every small structure $\str B$ in the non-expanded class there is a large structure $\str C$ in the non-expanded class such that every expansion of $\str C$ contains every expansion of $\str B$. The expansion property is a generalization of the ordering property studied by Ne\v set\v ril and R\"odl in the 70's and 80's~\cite{Nesetril1975} and it turns out that it expresses well what an intuitively ``good'' expansion is.

Nguyen Van Th\'e also proves that under certain assumptions there is, up to bi-definability, only one Ramsey expansion with the expansion property. And this is in some sense the best one. It is worth noting that Kechris, Pestov and Todor\v cevi\' c~\cite{Kechris2005} proved this for the special case when the expansion is all linear orders (i.e. the expansion property is the ordering property).

\chapter{Cherlin's catalogue of metrically homogeneous graphs}\label{ch:catalogue}
We are going to study certain classes of finite metric spaces of diameter $\delta$ (that is, metric spaces with distances in the set $\{0, 1, \ldots, \delta\}$) for some positive integer $\delta \geq 3$, i.e. \emph{$\delta$-valued metric spaces}. But usually, we will just call them metric spaces and $\delta$ will be clear from the context. There are two equivalent ways how to look at these metric spaces: One can consider them to be the tuple $(V, d)$, where $V$ is the vertex set and $d\colon  V^2\rightarrow \{0, \ldots, \delta\}$ is the \emph{metric} satisfying the axioms of metric spaces, namely
\begin{enumerate}
\item $d(x,y) = 0$ if and only if $x = y$;
\item $d(x,y) = d(y,x)$; and
\item $d(x,y)+d(y,z)\geq d(x,z)$, the \emph{triangle inequality}.
\end{enumerate}

Equivalently, one can consider the metric space $(V,d)$ to be the relational structure $(V, \rel{}{1}, \ldots, \rel{}{\delta})$, where $(a,b)\in \rel{}{i}$ if and only if $d(a,b) = i$.

Such a structure is essentially a complete undirected graph with edges labelled by integers from $\{1,\ldots,\delta\}$ such that it contains no non-metric triangle (a triangle with labels $a,b,c$ with $a>b+c$) as a substructure. We do not explicitly represent that $d(a,a) = 0$. We shall freely switch between these three interpretations without explicit notice. And thanks to these different points of view, we can now use all the (model-theoretic) notions from the earlier chapters also for metric spaces.

The tuple $(\{x,y,z\}, d)$ with $d(x,y) = d(y,x) = a$, $d(y,z) = d(z,y) = b$ and $d(x,z) = d(z,x) = c$ and $d(x,x)=d(y,y)=d(z,z)$ is called a \emph{triangle}. We will abuse notation and refer as \emph{triangle} to both the three vertices $u,v,w$ and the distances $a,b,c$, but we will try to use the letters $a$, $b$ and $c$ when talking about distances and $u$, $v$ and $w$ when talking about vertices. Finally, to save some visual comma-noise, we often omit the commas and talk about triangles $abc$ or $uvw$.

We say that a $\delta$-valued metric space $\str M = (M, d)$ \emph{omits the triangle $abc$} if there are no $x,y,z\in M$ with $d(x,y) = a$, $d(y,z) = b$ and $d(x,z) = c$. Equivalently, when viewed as a relational structure, there is no embedding of the triangle $abc$ into $\str M$. Let $\mathcal F$ be a class of $\delta$-valued triangles. We say that a class $\mathcal C$ of $\delta$-valued metric spaces is given by forbidding $\mathcal F$ if $\mathcal C$ consists precisely of those $\delta$-valued metric spaces which omit all triangles from $\mathcal F$.

A \emph{metrically homogeneous graph of diameter $\delta$} is a connected countable graph $\str G = (V, E)$ such that the structure $\str M = (V, d)$ is a homogeneous metric space of diameter $\delta$.\footnote{If we look at metric spaces as at relational structures in the language $\{\rel{}{i}\}_{i=1}^\delta$ and identify the edge relation $E$ with $\rel{}{1}$, then the metric space $\str M$ is a \emph{homogenization} of $\str G$, that is, a (minimal) expansion which is homogeneous.} Here $d\colon  V^2\rightarrow \mathbb \{0, \ldots, \delta\}$ assigns each pair of vertices the length (number of edges) of the shortest path connecting them.

We can now give the list of the metric spaces corresponding to the \emph{primitive 3-constrained metrically homogeneous graphs} by means of forbidden triangles. In doing that, we restrict definitions and theorems from~\cite{Cherlin2013}. (Primitive means that there are no \emph{algebraic closures} and no \emph{definable equivalence relations}, 3-constrained means that they are determined by constraints on \emph{3-types}, see~\cite{Cherlin2013}.) The other classes from Cherlin's catalogue are either very simple, or are extremal variants of the primitive 3-constrained classes and they are handled similarly to the primitive 3-constrained ones. For more about the other classes, see Chapter~\ref{ch:conclusion}.

 To describe these classes, five numerical parameters are needed: $(\delta, K_1, K_2,\allowbreak C_0, C_1)$. As we have already seen, $\delta$ is the diameter of the metric spaces. The other parameters are used to describe several types of forbidden triangles. There will be two ways of restricting which sequences are of interest. The \emph{acceptability conditions} give rough restrictions on the parameters and also ensure that no class can be described by more than one sequence of parameters.

\begin{definition}[Acceptable numerical parameters]
A sequence of positive integers $(\delta,K_1,K_2,C_0,C_1)$ is \emph{acceptable} if it satisfies the following conditions:
\begin{itemize}
  \setlength\itemsep{0em}
  \item $3\leq \delta < \infty$;
  \item $1\leq K_1\leq K_2\leq \delta$; and
  \item $2\delta+2\leq C_0,C_1\leq 3\delta+2$. Furthermore $C_0$ is even and $C_1$ is odd.
\end{itemize}
\end{definition}

Now we can describe the parametrized classes $\Aclass$ in terms of forbidden triangles:

\begin{definition}[Triangle constraints]
\label{defn:numerical}
Given acceptable parameters $\delta$, $K_1$, $K_2$, $C_0$ and $C_1$ we consider the class
$\Aclass$ of all finite $\delta$-valued metric spaces $\str{M}=(M,d)$ such that for every three distinct vertices $u\neq v\neq w\in M$ the following are true, where $p=d(u,v)+d(u,w)+d(v,w)$ is the perimeter of the triangle $u,v,w$ and $m=\min\{d(u,v),\allowbreak d(u,w),\allowbreak d(v,w)\}$ is the length of the shortest edge of $u,v,w$.
\begin{description}
 \setlength\itemsep{0em}
 \item[$K_1$-bound] If $p$ is odd then $2K_1 < p$;
 \item[$K_2$-bound] if $p$ is odd then $p < 2K_2 + 2m$;
 \item[$C_1$-bound] if $p$ is odd then $p<C_1$; and
 \item[$C_0$-bound] if $p$ is even then $p<C_0$.
\end{description}
\end{definition}
Intuitively, the parameter $K_1$ forbids all odd cycles shorter than $2K_1+1$, while $K_2$ ensures that the difference in length between even- and odd-distance paths connecting any
pair of vertices is less than $2K_2+1$. The parameters $C_0$ and $C_1$
forbid long even and odd cycles respectively.

Not every combination of numerical parameters leads to a strong amalgamation class (Definitions~\ref{defn:amalgamation} and~\ref{defn:strongamalgamation}). Those that do are characterised by the following theorem

\begin{theorem}[Cherlin's Admissibility Theorem~\cite{Cherlin2013}, simplified for primitive 3-constrained cases]
\label{thm:admissible}
Let $(\delta,K_1,K_2,C_0,C_1)$ be an acceptable sequence of parameters (in particular, $\delta\geq 3$). Then
the associated class $\mathcal A^\delta_{K_1,K_2,C_0,C_1}$ is a strong amalgamation class if
one of the following three groups of conditions is satisfied, where we write $C$ for $\min(C_0,C_1)$
and $C'$ for $\max(C_0,C_1)$:
\begin{enumerate}[label=(\Roman*)]
  \setcounter{enumi}{1}

\setlength\itemsep{0em}
\item\label{II} $C\leq 2\delta+K_1$, and
\begin{itemize}
 \setlength\itemsep{0em}
 \item $C=2K_1+2K_2+1$;
 \item $K_1+K_2\geq \delta$;
 \item $K_1+2K_2\leq 2\delta-1$, and:
\end{itemize}
\begin{enumerate}[label=(II\Alph*)]
\setlength\itemsep{0em}
\item\label{IIa} $C'=C+1$, or
\item\label{IIb} $C'>C+1, K_1=K_2$, and $3K_2=2\delta-1$.
\end{enumerate}
\item\label{III} $C\geq 2\delta+K_1+1$, and:
\begin{itemize}
 \setlength\itemsep{0em}
 \item $K_1+2K_2\geq 2\delta-1$ and $3K_2\geq 2\delta$;
 \item If $K_1+2K_2=2\delta-1$ then $C\geq 2\delta+K_1+2$;
 \item If $C'>C+1$ then $C\geq 2\delta+K_2$.
\end{itemize}
\end{enumerate}
\end{theorem}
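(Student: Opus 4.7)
The plan is to verify the three clauses of Definition~\ref{defn:amalgamation} together with the strongness condition of Definition~\ref{defn:strongamalgamation}. Closure under isomorphisms and substructures is immediate, since $\Aclass$ is defined by a set of forbidden irreducible substructures (the non-admissible triangles). The joint embedding property will follow from strong amalgamation once one takes $\str A$ to be a single vertex (every nonempty $\delta$-valued metric space contains one). So the real work is establishing strong amalgamation.

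Given $\str A, \str B_1, \str B_2 \in \Aclass$ and embeddings $\alpha_i\colon \str A \to \str B_i$, identify $\str B_1$ and $\str B_2$ along $\str A$ to form a partial structure $\str P$ on $A \cup (B_1 \setminus A) \cup (B_2 \setminus A)$ in which only the distances inside $\str B_1$ and inside $\str B_2$ are defined. One must assign a distance $d(b_1, b_2) \in \{1, \dots, \delta\}$ for every cross-pair $(b_1, b_2) \in (B_1 \setminus A) \times (B_2 \setminus A)$ so that no forbidden triangle is created. The new triangles split into three types: (a) two vertices in some $\str B_i$ and one in $\str B_{3-i} \setminus A$, which tests a single new distance against a distance already present in $\str B_i$; (b) one vertex in each of $B_1 \setminus A$ and $B_2 \setminus A$ together with one vertex from $A$, which tests a new distance against the base; and (c) triples containing two or three new edges, which test consistency of several new distances with each other.

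The first step is to compute, for each cross-pair, the range of admissible distances. The ordinary triangle inequality through each $a \in A$ yields $|d(b_1, a) - d(b_2, a)| \le d(b_1, b_2) \le d(b_1, a) + d(b_2, a)$; intersecting over $a \in A$ gives a nonempty range $[L, U]$ by the standard Urysohn argument for metric spaces. The bounds $K_1, K_2, C_0, C_1$ further restrict which values in $[L, U]$ are permissible: the odd-perimeter bounds can forbid all values of one parity on a subinterval, and the $C_0, C_1$ bounds impose an additional upper cutoff. The acceptability and admissibility conditions in the statement of the theorem are tuned precisely so that this refined admissible set remains nonempty for every cross-pair.

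The main obstacle is type (c): the individual choices must be made consistently across cross-pairs. A greedy choice can easily be forced into a dead end by a later triangle spanning both sides. My plan is to replace ``greedy'' by a canonical rule: pick $d(b_1, b_2)$ as close as possible to a single ``magic'' parameter $M = M(\delta, K_1, K_2, C_0, C_1)$ subject to lying in the admissible range. With such a rule, any type~(c) triangle has two edges close to $M$ and a third edge determined inside some $\str B_i$, so the casework reduces to checking that $M$ has been chosen with enough slack on all four bounds. The hardest case will be~\ref{II}, where the equality $C = 2K_1 + 2K_2 + 1$ leaves almost no room against the $K_2$-bound; I would split \ref{IIa} from \ref{IIb}, exploiting the extra relation $3K_2 = 2\delta - 1$ in \ref{IIb} to pin down $M$. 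In case~\ref{III}, the inequality $C \geq 2\delta + K_1 + 1$ opens a wide admissible window, and the verification should be considerably more forgiving.
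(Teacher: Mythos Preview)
The paper does not prove this theorem: it is stated as a simplified form of a result from Cherlin's monograph and used as a black box throughout, so there is no ``paper's own proof'' to compare against directly.

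That said, your strategy is precisely the one the paper develops in Chapter~\ref{ch:magiccompletion} for a stronger purpose. The magic completion algorithm (Definition~\ref{defn:magiccompletion}) and Theorem~\ref{thm:magiccompletion} show that \emph{any} $\delta$-edge-labelled graph with some completion in $\Aclass$ can be completed by choosing distances close to a magic parameter $M$. Strong amalgamation is the special case where the input is the free amalgam $\str B_1\cup_{\str A}\str B_2$, so the paper's machinery yields an independent proof of the theorem you are attacking, as a by-product.

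Where your sketch diverges from what is actually needed, and where it is too optimistic:

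Your rule ``pick $d(b_1,b_2)$ as close as possible to $M$ within the admissible range'' is a one-shot assignment computed from the original data. The paper's algorithm is iterative (Definition~\ref{defn:ftmcompletion}): distances are added one value at a time in an order prescribed by a time function $t_M$, and each step creates new forks that feed into later steps. This iterative structure is what makes the Optimality Lemma~\ref{lem:bestcompletion} and the Parity Lemma~\ref{lem:sameparity} provable by induction on the step number, and those two lemmas are the engine of the correctness proof.

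Your handling of type~(c) is the real gap. You assert that both new edges in such a triangle will end up ``close to $M$'', reducing the check to slack around $M$. But this is not automatic: the admissible range for a cross-pair can exclude $M$ entirely (for instance the fork $(\delta,\delta)$ in Case~\ref{IIb}, where the triangle $\delta\delta M$ is $C$-forbidden, forcing the algorithm to $M-1$; see Observation~\ref{obs:FCforks}). The paper does not argue via ``closeness to $M$'' at all. Instead it compares the magic completion edge-by-edge against an \emph{arbitrary} completion (Lemma~\ref{lem:bestcompletion}), tracks parity (Lemma~\ref{lem:sameparity}), and then, for each of the five forbidden-triangle types separately (Lemmas~\ref{lem:Cbound}--\ref{lem:K2bound}), shows that a forbidden triangle in the magic completion would force a forbidden triangle in every completion. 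The admissibility inequalities in Cases~\ref{II} and~\ref{III} are invoked line by line inside that case analysis, and it is substantially longer than your outline suggests --- Case~\ref{IIb} in particular generates special subcases in almost every lemma.

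Minor point: your type~(c) cannot have three new edges. Any three vertices of $B_1\cup_A B_2$ have at least two on the same side, hence at least one old edge between them.
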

A sequence of parameters $(\delta,K_1,K_2,C_0,C_1)$ is called {\em admissible} if and only if it satisfies one of the sets of conditions in Theorem~\ref{thm:admissible}. Note that there is no Case I, as it corresponds to the bipartite spaces in Cherlin's catalogue which are not primitive and hence not considered in this thesis.

\begin{example}
All admissible parameters with $\delta=3$ are listed in Table~\ref{tab:delta3}.
\end{example}

\begin{table}[h]
\centering 
\begin{tabular}{|rrrr|r|r|l|}
\hline
 $K_1$   & $K_2$&$C_0$& $C_1$& $M$ & Case  & Structure \\ \hline
 1       & 2 & 10 &  9 & 2 & \ref{III} & No $\delta\delta\delta$, $1\delta\delta$ triangles\\ 
 1       & 2 & 10 & 11 & 2 & \ref{III} & No $1\delta\delta$ triangles\\ 
 1       & 3 &  8 &  9 & 2 & \ref{III} & No 5-anticycle ($\delta\delta\delta\delta\delta$) \\ 
 1       & 3 & 10 & 9 & 2 & \ref{III} & No $\delta\delta\delta$ triangles\\ 
 1       & 3 & 10 & 11 & $2, 3$ & \ref{III} & All metric spaces \\ 
 2       & 2 & 10 & 9 & 2 & \ref{III}  & No $\delta\delta\delta$, $1\delta\delta$, $111$\\ 
 2       & 2 & 10 & 11 & 2 & \ref{III} & No $1\delta\delta$, $111$ triangles\\ 
 2       & 3 & 10 & 9 & 2 & \ref{III} & No $\delta\delta\delta$, $111$ triangles\\ 
 2       & 3 & 10 & 11 & $2, 3$ & \ref{III} & No $111$ triangles \\ 
 3       & 3 & 10 & 11 & 3 & \ref{III} & No 5-cycle \\ \hline
\end{tabular}
\caption{All admissible parameters for $\delta=3$. The second column lists the possible choices for magic distances (see Definition~\ref{defn:magicdistance}). The ``Structure'' column tries to describe the defining structural property; it does not list all the forbidden substructures.}
\label{tab:delta3}
\end{table}

\chapter{Preliminaries}\label{ch:preliminaries}
We are going to work with \textit{incomplete} metric spaces (in the sense that not all distances are defined). Recall from previous chapters that a structure $\str G = (V, E, d)$ is a \emph{$\delta$-edge-labelled graph} if $(V,E)$ is an undirected graph without loops and $d\colon E\rightarrow\{1,\ldots,\delta\}$ is a (distance) function.

Clearly the edge relation is redundant as it can be inferred from the domain of $d$. We can also treat $d$ as a partial function from $V^2$ to $\{0, 1, \ldots, \delta\}$, which is symmetric, zero for $d(x,x)$ an undefined whenever $xy\notin E$. The last possible way of looking at a $\delta$-edge-labelled graph is as on a relational structure with relations $\rel{}{1}, \ldots, \rel{}{\delta}$, all of them binary and symmetric, where each pair of vertices is in at most one relation. We will use all these perspectives and will switch between them implicitly.

By $\mathcal G^\delta$ we will denote the class of all finite $\delta$-edge-labelled graphs.

\section{The Hubi\v cka-Ne\v set\v ril theorem}
To prove the Ramsey property, we are going to employ a deep theorem by Hubi\v cka and Ne\v set\v ril~\cite{Hubicka2016}, which is going to do all the heavy lifting and complex constructions for us, we ``only'' check the conditions under which their theorem can be applied. We will state a weaker variant of the results which is simpler, yet sufficient for our purposes.

Recall that a structure is irreducible if every pair of vertices is in some relation.
\begin{definition}
Let $L$ be a relational language and $\str A, \str B$ arbitrary $L$-structures. We say that a map $f\colon  A\rightarrow B$ is a \emph{homomorphism} if for every relation $\rel{}{} \in L$ it holds that whenever $(x_1, \ldots, x_{\arity{}}) \in \rel{A}{}$, then also $(f(x_1), \ldots, f(x_{\arity{}})) \in \rel{B}{}$. We write $f\colon \str A\rightarrow \str B$ for a homomorphism $f$ to emphasize that it respects the structure.

A homomorphism $f\colon \str A\rightarrow \str B$ is a \emph{homomorphism-embedding} if for every irreducible $\str C\subseteq \str A$ it holds that $f$ restricted to $\str C$ is an embedding.

For a class of $L$-structures $\mathcal F$, by $\Forb(\mathcal F)$ we denote the class of all finite $L$-structures $\str A$ such that there is no $\str F\in \mathcal F$ with a homomorphism $\str F\rightarrow \str A$.
\end{definition}

\begin{definition}[Completion]\label{defn:completion}
Let $L$ be a language and $\str C$ be a structure. An irreducible structure $\str C'$ is a \emph{(strong) completion} of $\str C$ if there is an injective homomorphism-embedding $f\colon \str C\rightarrow \str C'$.
\end{definition}

\begin{definition}[Locally finite subclass~\cite{Hubicka2016}]
Let $L$ be a language, $\mathcal R$ be a class of finite irreducible structures and $\mathcal K\subseteq \mathcal R$ a subclass of $\mathcal R$. We say that $\mathcal K$ is a \emph{locally finite subclass of $\mathcal R$} if for every $\str C_0 \in \mathcal R$ there exists an integer $n=n(\str C_0)$ such that for every finite $L$-structure $\str C$ there exists $\str C' \in \mathcal K$ which is a completion of $\str C$ provided that:
\begin{enumerate}
\item There exists a homomorphism-embedding from $\str C$ to $\str C_0$; and
\item for every substructure $\str S\subseteq \str C$ such that $\str S$ has at most $n$ vertices there exists $\str S'\in \mathcal K$ which is a completion of $\str S$.
\end{enumerate}
\end{definition}

Now we can state the main result of~\cite{Hubicka2016}.
\begin{theorem}[Hubi\v cka-Ne\v set\v ril~\cite{Hubicka2016}]\label{thm:hn}
Let $L$ be a language and $\mathcal R$ be a class of finite irreducible $L$-structures which has the Ramsey property. Let $\mathcal K\subseteq \mathcal R$ be a locally finite subclass of $\mathcal R$ which has the strong amalgamation property and is hereditary (if $\str B\in \mathcal K$ and $\str A\subseteq \str B$, then $\str A\in \mathcal K$). Then $\mathcal K$ is Ramsey.
\end{theorem}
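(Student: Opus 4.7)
Fix $\str A, \str B \in \mathcal K$ and $k \geq 1$; we seek $\str C \in \mathcal K$ with $\str C \to (\str B)^{\str A}_k$. The overall strategy is to use the Ramsey property of $\mathcal R$ as a black box, carry out a \emph{partite blow-up} of the resulting witness so that all small substructures become completable in $\mathcal K$, and then invoke local finiteness to land inside $\mathcal K$. Concretely, since $\str A, \str B \in \mathcal K \subseteq \mathcal R$, Ramseyness of $\mathcal R$ supplies some $\str D \in \mathcal R$ with $\str D \to (\str B)^{\str A}_k$; let $n = n(\str D)$ be the integer supplied by local finiteness of $\mathcal K$ in $\mathcal R$ applied with $\str C_0 := \str D$.

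Next I would construct, by the Ne\v set\v ril--R\"odl partite method, an auxiliary $L$-structure $\str C^*$ together with a homomorphism-embedding $\pi \colon \str C^* \to \str D$ such that
\begin{enumerate}
\item[(i)] for every embedding $\beta \colon \str B \to \str D$ there is a lift $\hat\beta \colon \str B \to \str C^*$ with $\pi \circ \hat\beta = \beta$; and
\item[(ii)] every substructure of $\str C^*$ on at most $n$ vertices has a completion in $\mathcal K$.
\end{enumerate}
This $\str C^*$ is built by iteratively strong-amalgamating, in $\mathcal K$, one disjoint copy of $\str B$ for each embedding $\str B \to \str D$, glued only along their common copies of $\str A$; the strong amalgamation property of $\mathcal K$ ensures both that the intermediate amalgams sit inside $\mathcal K$ (giving (ii)) and that copies of $\str A$ in different copies of $\str B$ which project to the same copy of $\str A$ in $\str D$ get \emph{identified} in $\str C^*$ -- this last point is what makes the colour pullback below well-defined. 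Local finiteness then produces a strong completion $\str C \in \mathcal K$ via an injective homomorphism-embedding $\iota \colon \str C^* \to \str C$; since $\str A$ and $\str B$ are irreducible (as members of $\mathcal R$), each $\iota \circ \hat\beta$ is an honest embedding of $\str B$ into $\str C$.

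Finally, to verify the Ramsey arrow, given $c \colon {\str C\choose \str A} \to [k]$ define $c' \colon {\str D\choose \str A} \to [k]$ by $c'(\alpha) := c(\iota \circ \hat\alpha)$, where $\hat\alpha$ is the (by construction unique) lift of $\alpha$ through $\pi$. Ramseyness of $\str D$ yields a $c'$-monochromatic $\beta \colon \str B \to \str D$, and then $\iota \circ \hat\beta$ is a $c$-monochromatic copy of $\str B$ in $\str C$: every copy of $\str A$ inside $\hat\beta(\str B)$ is of the form $\hat\beta \circ \alpha'$ for some embedding $\alpha' \colon \str A \to \str B$, and by uniqueness of lifts equals $\widehat{\beta \circ \alpha'}$, so $c(\iota \circ \hat\beta \circ \alpha') = c'(\beta \circ \alpha')$, which is constant in $\alpha'$. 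The main obstacle is the partite construction of $\str C^*$: one must simultaneously secure (i), (ii), the identification of copies of $\str A$ projecting to the same element of ${\str D\choose \str A}$, and the homomorphism-embedding property of $\pi$. This coordinated iterative amalgamation is the technical heart of the theorem and is where the \emph{strong} amalgamation hypothesis on $\mathcal K$ (as opposed to mere amalgamation) plays its essential role -- without it, different copies of $\str A$ could collapse during amalgamation and destroy the pullback mechanism.
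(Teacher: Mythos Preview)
The paper does not prove this theorem; it is quoted from \cite{Hubicka2016} as a black box. So there is no ``paper's proof'' to compare against, and I evaluate your sketch on its own merits.

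The overall architecture (use Ramseyness of $\mathcal R$ to get $\str D$, build an auxiliary $\str C^*$ over $\str D$, invoke local finiteness to land in $\mathcal K$, then pull colours back) is correct and is indeed the shape of the Hubi\v cka--Ne\v set\v ril argument. The final paragraph, reducing a colouring of ${\str C\choose\str A}$ to one of ${\str D\choose\str A}$ via unique lifts, is fine once $\str C^*$ exists with the stated properties.

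The gap is in the construction of $\str C^*$. You propose to build it by ``iteratively strong-amalgamating, in $\mathcal K$, one disjoint copy of $\str B$ for each embedding $\str B\to\str D$, glued only along their common copies of $\str A$'', and claim this forces the intermediate amalgams to lie in $\mathcal K$, hence gives (ii). This does not work. When you adjoin a new copy of $\str B$ to the partial amalgam, that copy will typically share \emph{several} copies of $\str A$ with previously added copies of $\str B$ (one for each embedding $\str A\to\str D$ lying in the intersection of the corresponding images in $\str D$). You would then have to amalgamate over the union of those $\str A$-copies inside the partial structure, but that union need not embed into the new copy of $\str B$ at all, so this is not an instance of the amalgamation property. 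Consequently there is no reason the resulting $\str C^*$ lies in $\mathcal K$, nor that its small substructures complete into $\mathcal K$; and note that if your argument \emph{did} show $\str C^*\in\mathcal K$ outright, the local finiteness hypothesis would be superfluous.

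This is precisely why the actual proof in \cite{Hubicka2016} uses the full Ne\v set\v ril--R\"odl iterated partite construction (partite lemma followed by a partite construction indexed by the vertices of $\str D$), rather than gluing whole copies of $\str B$ directly. In that machinery the completion/local finiteness hypothesis is invoked inside the partite lemma to amalgamate one vertex-fibre at a time, which is exactly what makes condition (ii) provable. Your sketch correctly identifies \emph{that} this construction is the heart of the theorem, but does not supply it.
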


In Chapter~\ref{ch:metricspaces} we give an easy example of an application of Theorem~\ref{thm:hn} and then in the rest of the thesis we use it for Cherlin's metric spaces: In Chapter~\ref{ch:magiccompletion}, we prove that for every class $\Aclass$ there is a finite family $\mathcal F$ of $\delta$-edge-labelled cycles such that a finite $\delta$-edge-labelled graph $\str G$ has a completion in $\Aclass$ if and only if $\str G \in \Forb(\mathcal F)$. The finiteness of $\mathcal F$ in particular ensures that $\Aclass$ is a locally finite subclass of all $\delta$-edge-labelled graphs. To obtain the Ramsey property, we need to add linear orders, but as they are independent from the metric structure, the argument will be essentially the same.

\section{EPPA}
There is another combinatorial property for which there turns out to be a theorem similar to Theorem~\ref{thm:hn} and which we, hence, get almost for free.\footnote{The paper~\cite{Aranda2017} is one of the first papers where the Ramsey property and EPPA are proved at the same time.}

Here we only prove a weak variant of what is proved in~\cite{Aranda2017} as the goal is not to present a lot of necessary definitions, but to advocate the usefulness of the study of completions to various classes. For an overview of the whole area consult the PhD thesis of Daoud Siniora~\cite{Siniora2}.

\begin{definition}[EPPA]\label{defn:eppa}
Let $L$ be a language and $\mathcal C$ be a class of finite $L$-structures. We say that $\mathcal C$ has the \emph{extension property for partial automorphisms} (\emph{EPPA}) if for every $\str A\in \mathcal C$ there exists $\str B\in \mathcal C$ and an embedding $\alpha\colon \str A\rightarrow \str B$ such that for every partial automorphism $f\colon  \str A\rightarrow \str A$ (isomorphism of substructures of $\str A$) there exists an automorphism $g$ of $\str B$ such that $g$ extends $f$, or formally $\alpha\circ f \subseteq g\circ \alpha$.
\end{definition}
EPPA is sometimes also called the \emph{Hrushovski property}, because Hrushovski was the first to prove that the class of all finite graphs has EPPA~\cite{hrushovski1992}.

A distant analogue of the Hubi\v cka-Ne\v set\v ril theorem for EPPA is the following result by Herwig and Lascar.
\begin{theorem}[Herwig-Lascar~\cite{herwig2000}]\label{thm:herwiglascar}
Let $L$ be a language, $\mathcal F$ be a finite family of $L$-structures and $\str{A}\in \Forb(\mathcal F)$ a finite $L$-structure. If there exists an $L$-structure $\str{M}$ containing $\str{A}$ such that every partial automorphism of $\str A$ extends to an automorphism of $\str{M}$ and moreover there is no $\str{F}\in \mathcal F$ with a homomorphism $\str{F}\to\str{M}$, then there exists a finite structure $\str{B}\in \Forb(\mathcal F)$ containing $\str A$ such that every partial automorphism of $\str A$ extends to an automorphism of $\str B$.
\end{theorem}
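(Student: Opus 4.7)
The plan is to prove Herwig-Lascar's theorem in two stages. First I would apply Herwig's earlier EPPA theorem for finite relational languages (which ignores any forbidden family $\mathcal{F}$) to obtain a finite $L$-structure $\str{B}_0$ containing $\str{A}$ with the property that every partial automorphism of $\str{A}$ extends to a full automorphism of $\str{B}_0$. This $\str{B}_0$ is in general \emph{not} a member of $\Forb(\mathcal{F})$, so the real work is to replace it by a finite $\str{B}\supseteq \str{A}$ that lies in $\Forb(\mathcal{F})$ while retaining the extension property for partial automorphisms of $\str{A}$.

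The construction of $\str{B}$ from $\str{B}_0$ is where the hypothesis on $\str{M}$ becomes essential. The strategy is to build a finite ``covering'' of $\str{B}_0$ guided by $\str{M}$. Concretely, let $G$ be the free group generated by a chosen set of automorphisms of $\str{B}_0$ that witness the extensions of partial automorphisms of $\str{A}$, and consider the Cayley-graph-like structure on $\str{B}_0 \times G$. Using the infinite ``universal cover'' provided by $\str{M}$ and the fact that partial automorphisms of $\str{A}$ extend to automorphisms of $\str{M}$, one shows that this universal cover homomorphism-embeds into $\str{M}$, and hence admits no homomorphism from any $\str{F}\in\mathcal{F}$. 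Quotienting by a suitable normal subgroup $N$ of finite index in $G$ produces a finite structure $\str{B}$. Choosing $N$ carefully (with sufficiently high index) ensures that every would-be homomorphism $\str{F}\to \str{B}$, for $\str{F}\in\mathcal{F}$, lifts to a homomorphism into the universal cover, and thus into $\str{M}$, contradicting the assumption. Finiteness of $\mathcal{F}$ is what lets one pick a single $N$ that works for every forbidden structure simultaneously.

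Once $\str{B}$ is constructed, the EPPA property is inherited almost for free: any partial automorphism $f$ of $\str{A}$ extends to an automorphism $g_0$ of $\str{B}_0$, and $g_0$ acts on the covering structure by translation in $G$ and thus descends to an automorphism $g$ of $\str{B}$ extending $f$, once one identifies $\str{A}$ with a fixed lift inside the coset indexed by the identity.

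The main obstacle is the choice of the finite-index normal subgroup $N\trianglelefteq G$: it must be normal and invariant under the lifts of partial automorphisms of $\str{A}$, it must have index large enough to kill every homomorphism from every $\str{F}\in\mathcal{F}$, and it must be compatible with the relational structure on $\str{B}_0$ so that the quotient really is an $L$-structure. This is exactly the point at which Herwig and Lascar invoke Ribes-Zalesskii-type results on the profinite topology of free groups; I would expect this to be the technical heart of the argument, and everything else to be comparatively routine bookkeeping.
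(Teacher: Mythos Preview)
The paper does not prove this theorem at all: Theorem~\ref{thm:herwiglascar} is quoted from Herwig and Lascar~\cite{herwig2000} and used as a black box, with no proof or proof sketch given. There is therefore nothing in the paper to compare your proposal against.

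That said, your outline is a reasonable high-level summary of the actual Herwig--Lascar argument: one does start from an unrestricted EPPA witness, build a tree-like (free-group-indexed) cover that maps into the assumed model $\str{M}$, and then pass to a finite quotient chosen so that forbidden homomorphisms would lift; the Ribes--Zalesskii theorem on the profinite topology of free groups is indeed the technical core that makes the choice of the finite-index normal subgroup possible. Your description of how the extension property survives the quotient is also essentially correct. The main place where your sketch is vague is exactly where the real difficulty lies --- articulating precisely which property of $N$ guarantees that every homomorphism from some $\str{F}\in\mathcal F$ into the quotient lifts to the cover, and why Ribes--Zalesskii delivers such an $N$ --- but you correctly identify this as the heart of the matter rather than pretending it is routine.
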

If $\str A\in \mathcal K\subseteq \Forb(\mathcal F)$ where $\mathcal K$ is a \Fraisse{} class, then one can simply take $\str M$ as the \Fraisse{} limit of $\mathcal K$.

In this thesis, we are dealing with complete structures, so the Herwig-Lascar theorem is not sufficient by itself, one needs a very easy corollary.

\begin{definition}[Automorphism-preserving completion]
Let $L$ be a language, $\mathcal K$ be a class of finite irreducible $L$-structures and $\str C$ be an $L$-structure. We say that $\str C$ has an \emph{automorphism-preserving completion} in $\mathcal K$ if there is $\str C'\in \mathcal K$ which is a strong completion of $\str C$ (i.e. there exists an injective homomorphism-embedding $f\colon \str C\rightarrow \str C'$, cf. Definition~\ref{defn:completion}) and further if $\alpha\colon \str C\rightarrow \str C$ is an automorphism of $\str C$, then there is an automorphism $\beta\colon \str C'\rightarrow \str C'$ such that $f\circ \alpha = \beta\circ f$ (every automorphism of $\str C$ can be extended to an automorphism of $\str C'$).
\end{definition}

\begin{corollary}\label{cor:herwiglascar}
Let $L$ be a language and $\mathcal K$ be a \Fraisse{} class of finite irreducible $L$-structures. If there exists a finite family $\mathcal F$ of finite $L$-structures such that $\mathcal K\subseteq \Forb(\mathcal F)$ and every $\str C\in \Forb(\mathcal F)$ has an automorphism-preserving completion in $\mathcal K$, then $\mathcal K$ has EPPA.
\end{corollary}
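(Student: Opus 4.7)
The plan is to apply Theorem~\ref{thm:herwiglascar} using the \Fraisse{} limit of $\mathcal K$ as the auxiliary structure $\str M$, obtain a finite witness $\str B_0 \in \Forb(\mathcal F)$ with the required extension property, and then use the automorphism-preserving completion hypothesis to pass from $\str B_0$ to a genuine member $\str B$ of $\mathcal K$ containing $\str A$.

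First, fix $\str A \in \mathcal K$ and let $\str M$ denote the \Fraisse{} limit of $\mathcal K$. Since $\mathcal K = \Age(\str M)$, there is an embedding of $\str A$ into $\str M$, and by homogeneity of $\str M$ every partial automorphism of (the image of) $\str A$ extends to an automorphism of $\str M$. I also need $\str M \in \Forb(\mathcal F)$ in the sense used in Theorem~\ref{thm:herwiglascar}: if there were a homomorphism $\str F \to \str M$ with $\str F \in \mathcal F$, its image would lie in some finite substructure $\str N \subseteq \str M$, and since $\str N \in \mathcal K \subseteq \Forb(\mathcal F)$, this would immediately contradict $\str N \in \Forb(\mathcal F)$.

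Having verified the hypotheses of Theorem~\ref{thm:herwiglascar}, I obtain a finite $\str B_0 \in \Forb(\mathcal F)$ containing $\str A$ such that every partial automorphism of $\str A$ extends to an automorphism of $\str B_0$. By the hypothesis of the corollary, $\str B_0$ admits an automorphism-preserving completion $\str B \in \mathcal K$ via an injective homomorphism-embedding $f \colon \str B_0 \to \str B$. Because $\str A$ is irreducible (as $\str A \in \mathcal K$) and a homomorphism-embedding restricts to an embedding on every irreducible substructure of its domain, the map $f|_A$ is a genuine embedding of $\str A$ into $\str B$.

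It remains to check EPPA for $\str B$ relative to this embedding of $\str A$: given a partial automorphism $p$ of $\str A$, first extend it to an automorphism $\alpha$ of $\str B_0$ (provided by Herwig-Lascar), and then lift $\alpha$ to an automorphism $\beta$ of $\str B$ satisfying $f \circ \alpha = \beta \circ f$ (provided by automorphism-preservation). Restricting this identity to the domain of $p$ shows that $\beta$ extends the partial automorphism $f \circ p \circ f^{-1}$ on the embedded copy $f(\str A) \subseteq \str B$, which is exactly the EPPA condition. The main subtlety is ensuring that the embedding of $\str A$ survives both stages — the Herwig-Lascar construction (automatic, since $\str A$ literally sits inside $\str B_0$) and the passage to the completion $\str B$ — and this is where the irreducibility of $\str A$ built into the definition of $\mathcal K$ is essential; everything else is bookkeeping.
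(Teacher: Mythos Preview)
Your proof is correct and follows exactly the same strategy as the paper's: apply Theorem~\ref{thm:herwiglascar} with the \Fraisse{} limit as $\str M$, obtain a finite EPPA-witness in $\Forb(\mathcal F)$, and then pass to $\mathcal K$ via the automorphism-preserving completion. The paper's version is much terser, but you have usefully made explicit two points it glosses over: why $\str M$ itself lies in $\Forb(\mathcal F)$, and why the irreducibility of $\str A$ is needed to ensure that $f|_A$ remains an embedding after completion.
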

\begin{proof}
Take an arbitrary $\str A\in \mathcal K$. As $\mathcal K$ is a \Fraisse{} class and $\mathcal K\subseteq \Forb(\mathcal F)$, both conditions of Theorem~\ref{thm:herwiglascar} are satisfied and hence there is a structure $\str B\in \Forb(\mathcal F)$ which is an \emph{EPPA-witness} for $\str A$ in $\Forb(\mathcal F)$. Now it is enough to take the automorphism-preserving completion of $\str B$ in $\mathcal K$.
\end{proof}
\chapter{Metric spaces}\label{ch:metricspaces}
In~2007 Ne\v set\v ril proved that the class of all finite ordered metric spaces has the Ramsey property~\cite{Nevsetvril2007}, while Solecki~\cite{solecki2005} and Vershik~\cite{vershik2008} independently proved that the class of all finite metric spaces has EPPA.\footnote{It is worth mentioning that Ma\v sulovi\'c~\cite{Masulovic2017metric} gave a simpler proof of Ne\v set\v ril's theorem by a reduction to the Graham-Rothschild theorem.}

As a warm-up for the next chapter, we shall use the modern techniques to prove that the class of all ordered finite $\delta$-valued metric spaces is Ramsey. Almost for free (using Corollary~\ref{cor:herwiglascar}) we also get EPPA. The subsequent chapters generalise the approach introduced here, but are technically much more challenging.

Let $\delta\geq 2$ be an integer. By $\mathcal M^\delta$ we denote the class of all finite $\delta$-valued metric spaces. We will look at the structures both as on $\delta$-edge-labelled graphs and as tuples $(V, d)$, where $d\colon V^2\rightarrow \{0,1,2,\ldots,\delta\}$ is the metric. Finally by $\overrightarrow{\mathcal M^\delta}$ we mean the class of all ordered finite $\delta$-valued metric spaces; for every $(V, d)\in \mathcal M^\delta$ there are all $|V|!$ of its expansions in $\overrightarrow{\mathcal M^\delta}$. Take it as a fact (but it also follows from the following theorem) that $\mathcal M^\delta$ is a \Fraisse{} class with the strong amalgamation property.

For $\delta\geq 2$, let $\mathcal F^\delta\subset \mathcal G^\delta$ be the class of all \emph{non-metric cycles}, that is, $\delta$-edge-labelled cycles with distances $\ell, a_1, \ldots, a_k$ such that $\ell > \sum_i a_i$.

We first state the main result of this short chapter and use it to prove two corollaries.
\begin{theorem}\label{thm:metriccompletion}
Fix an integer $\delta\geq 2$. Then $\str G\in \mathcal G^\delta$ has a completion in $\mathcal M^\delta$ if and only if $\str G\in \Forb(\mathcal F^\delta)$.

Furthermore, for every $\str C = (C, d)\in \Forb(\mathcal F^\delta)$ there exists $\str C' = (C, d') \in \mathcal M^\delta$ such that $\str C'$ is an automorphism-preserving completion of $\str C$ and for every $\str C'' = (C, d'')\in \mathcal M^\delta$ which is a completion of $\str C$ and for every $x,y\in C$ it holds that $d'(x,y) \geq d''(x,y)$.
\end{theorem}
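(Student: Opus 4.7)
The plan is to define explicitly the \emph{shortest path completion} $d'$ and verify it has all the listed properties. Concretely, for $\str C = (C, d) \in \Forb(\mathcal F^\delta)$, set
$$d'(x,y) = \min\!\left(\delta,\; \inf_{P} \operatorname{len}(P)\right),$$
where $P$ ranges over all paths in $\str C$ connecting $x$ and $y$, and $\operatorname{len}(P)$ is the sum of the distance labels along $P$ (if no such path exists, the infimum is $\delta$). I will verify (i) $d'$ extends $d$, (ii) $d'$ satisfies the triangle inequality, (iii) the completion is automorphism-preserving, and (iv) any other completion is pointwise dominated by $d'$. The first direction of the equivalence (``only if'') is the easy cycle-tracing observation and I would handle it first.

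For the easy direction: if $\str C''=(C,d'')\in\mathcal M^\delta$ is any completion of $\str G$, then for every cycle with edges $\ell,a_1,\ldots,a_k$ in $\str G$, iterated application of the triangle inequality in $\str C''$ gives $\ell \leq a_1+\cdots+a_k$, so $\str G$ cannot contain a non-metric cycle, proving $\str G\in\Forb(\mathcal F^\delta)$. Along the same lines I would read off maximality (iv): for any completion $\str C''$ and any path $P=x,v_1,\ldots,v_{k-1},y$ in $\str C$, the triangle inequality in $\str C''$ yields $d''(x,y)\leq \operatorname{len}(P)$, and also $d''(x,y)\leq \delta$; taking the infimum over $P$ gives $d''(x,y)\leq d'(x,y)$.

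For the main direction I would first check (i), which is the only place where the hypothesis $\str C\in\Forb(\mathcal F^\delta)$ is actually used. If $xy$ is an edge with $d(x,y)=\ell$ and there were a path $P$ from $x$ to $y$ of total length strictly less than $\ell$, then $P$ together with the edge $xy$ would be a non-metric cycle in $\str C$, contradiction; hence $d'(x,y)=\ell$ (note $\ell\leq \delta$). For (ii), given $x,y,z$ and paths $P_{xy}$ and $P_{yz}$ realising distances close to $d'(x,y)$ and $d'(y,z)$, their concatenation is a path from $x$ to $z$ whose length is at most $d'(x,y)+d'(y,z)$; combined with the bound $d'(x,z)\leq \delta$ this gives $d'(x,z)\leq d'(x,y)+d'(y,z)$ (using that all distances are bounded by $\delta$). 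Symmetry is immediate from the path definition, and $d'(x,y)=0 \iff x=y$ comes from the convention that the empty path has length $0$ only from a vertex to itself.

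Part (iii) is essentially automatic from the definition: if $\alpha$ is an automorphism of $\str C$, then $\alpha$ bijectively maps $xy$-paths of length $n$ to $\alpha(x)\alpha(y)$-paths of length $n$, so $d'(\alpha(x),\alpha(y))=d'(x,y)$, meaning $\alpha$ is also an automorphism of $\str C'=(C,d')$, so every automorphism of $\str C$ extends (trivially, by itself) to an automorphism of $\str C'$. The expected main obstacle is really (i), the verification that the shortest path completion does not ``shorten'' an existing edge — but this is precisely what the forbidden non-metric cycles rule out, so the hypothesis is tailored to the argument and no technical difficulty remains.
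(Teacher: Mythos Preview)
Your proposal is correct and follows the same route as the paper: define the shortest path completion and verify extension, triangle inequality, maximality, and automorphism preservation. Two small points to tighten: in (ii) the concatenation of two paths is a \emph{walk}, not a path, so you need the (trivial, since all labels are positive) observation that any walk can be shortcut to a path of no greater length; and in the ``only if'' direction remember that $\Forb(\mathcal F^\delta)$ is defined via \emph{homomorphisms}, not subgraphs, so phrase the iterated triangle inequality along the closed walk that a homomorphic image of a non-metric cycle traces in the completion.
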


Note that $\mathcal F^\delta$ is finite for every $\delta$ (because every $\str F\in \mathcal F^\delta$ has at most $\delta$ vertices). With this in mind, Corollary~\ref{cor:herwiglascar} gives us immediately:
\begin{corollary}
$\mathcal M^\delta$ has EPPA for every $\delta$.
\end{corollary}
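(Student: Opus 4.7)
The plan is to invoke Corollary~\ref{cor:herwiglascar} with $\mathcal K = \mathcal M^\delta$ and $\mathcal F = \mathcal F^\delta$. All the non-trivial work has already been packaged into Theorem~\ref{thm:metriccompletion}; the corollary is then just a matter of checking that its hypotheses are satisfied.

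First I would verify the background hypotheses of Corollary~\ref{cor:herwiglascar}. The class $\mathcal M^\delta$ consists of complete $\delta$-edge-labelled graphs, hence finite irreducible $L$-structures in the language $\{\rel{}{1},\ldots,\rel{}{\delta}\}$, and it is stated to be a \Fraisse{} class (with strong amalgamation). Next I would take $\mathcal F := \mathcal F^\delta$, the class of non-metric cycles. The family $\mathcal F^\delta$ is finite because any non-metric cycle with long edge $\ell$ and remaining edges $a_1,\ldots,a_k$ satisfies $k < \sum_{i=1}^k a_i < \ell \leq \delta$, so the cycle has fewer than $\delta+1$ vertices, and there are only finitely many $\delta$-edge-labellings on such a bounded number of vertices (up to isomorphism).

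Second, I would check $\mathcal M^\delta \subseteq \Forb(\mathcal F^\delta)$. If $\str M \in \mathcal M^\delta$ admitted a homomorphism from some non-metric cycle with edges $\ell, a_1, \ldots, a_k$, then iterating the triangle inequality along the path in $\str M$ realising $a_1,\ldots,a_k$ would force the $\ell$-edge to have distance at most $\sum a_i < \ell$, a contradiction. Hence no such homomorphism exists.

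Third, the second clause of Theorem~\ref{thm:metriccompletion} gives exactly what the hypothesis of Corollary~\ref{cor:herwiglascar} demands: every $\str C \in \Forb(\mathcal F^\delta)$ admits an automorphism-preserving completion $\str C' \in \mathcal M^\delta$. Having verified all three ingredients, Corollary~\ref{cor:herwiglascar} applies and yields EPPA for $\mathcal M^\delta$. I do not expect any real obstacle: the only non-routine input is the finiteness of $\mathcal F^\delta$ combined with the existence (and automorphism-preserving nature) of the shortest-path completion, both of which are already at hand.
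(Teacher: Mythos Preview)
Your proposal is correct and follows exactly the paper's approach: note that $\mathcal F^\delta$ is finite (each non-metric cycle has at most $\delta$ vertices) and invoke Corollary~\ref{cor:herwiglascar} via Theorem~\ref{thm:metriccompletion}. The paper's proof is a single sentence to this effect; you have simply spelled out the hypothesis-checking in more detail (with one harmless slip: $k \leq \sum a_i$ rather than $k < \sum a_i$, but the conclusion is unaffected).
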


To prove the Ramsey property, we need to do a little more work.
\begin{corollary}\label{cor:metricareramsey}
$\overrightarrow{\mathcal M^\delta}$ has the Ramsey property for every $\delta$.
\end{corollary}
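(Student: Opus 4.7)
The plan is to apply the Hubi\v cka--Ne\v set\v ril theorem (Theorem~\ref{thm:hn}) with $\mathcal K = \overrightarrow{\mathcal M^\delta}$ and with $\mathcal R$ being the class of all finite linearly ordered complete $\delta$-edge-labelled graphs (i.e.\ structures in the language $\{R_1,\ldots,R_\delta,\leq\}$ in which every pair is in exactly one of the $R_i$ and $\leq$ is a linear order, but with no metric constraint). Every structure in $\mathcal R$ is irreducible since every pair lies in some $R_i$.

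First I would verify that $\mathcal R$ is Ramsey. Applying the Ne\v set\v ril--R\"odl theorem (Theorem~\ref{thm:nr}) with $\mathcal F$ consisting of the irreducible two-vertex structures having a single pair labelled by two distinct $R_i,R_j$ yields the Ramsey property for $\Forb(\overrightarrow{\mathcal F})$, the class of all ordered $\delta$-edge-labelled graphs in which no pair carries two distinct labels. Since $\mathcal R$ sits inside this Ramsey class, Ramsey for $\mathcal R$ follows by the standard trick of completing any $\mathcal R'$-witness to a member of $\mathcal R$ by assigning arbitrary labels to the pairs not yet covered; embeddings and monochromatic copies are preserved. Heredity and strong amalgamation of $\overrightarrow{\mathcal M^\delta}$ are routine: amalgamating $\str{B}_1$ and $\str{B}_2$ over $\str A$ by setting $d(x,y)=\min_{z\in A}(d(x,z)+d(z,y))$ for $x\in B_1\setminus A, y\in B_2\setminus A$ (truncated at $\delta$) and placing $B_1\setminus A$ below $B_2\setminus A$ in the linear order gives a strong amalgam.

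The heart of the argument is local finiteness of $\overrightarrow{\mathcal M^\delta}$ in $\mathcal R$, which is where Theorem~\ref{thm:metriccompletion} enters. For any $\str C_0\in\mathcal R$ I would set $n(\str C_0)=\delta$, independent of $\str C_0$. The justification is that every non-metric cycle in $\mathcal F^\delta$ has at most $\delta$ vertices, since the long edge has length at most $\delta$ and the remaining edges have length at least $1$. Given a finite $L$-structure $\str C$ with a homomorphism-embedding into some $\str C_0\in\mathcal R$, the hom-embedding condition forces $\str C$ to carry at most one label on each pair (otherwise the induced two-vertex substructure could not embed into $\str C_0$) and forces its partial order to be consistent with a linear order; so $\str C$ is essentially an ordered member of $\mathcal G^\delta$ with possibly some distances undefined. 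If every $\leq\delta$-vertex substructure of $\str C$ has a completion in $\overrightarrow{\mathcal M^\delta}$, then $\str C$ contains no non-metric cycle, hence by Theorem~\ref{thm:metriccompletion} it admits a completion $\str C'\in\mathcal M^\delta$; an arbitrary linear extension of the partial order gives a completion in $\overrightarrow{\mathcal M^\delta}$.

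The main obstacle I anticipate is bookkeeping rather than conceptual: precisely identifying $\mathcal R$ so that it is simultaneously irreducible, Ramsey, and large enough to contain $\overrightarrow{\mathcal M^\delta}$, and carefully arguing that hom-embeddings from incomplete $\str C$ into $\str C_0\in\mathcal R$ really do force the single-label and order-consistency properties used above. Once that is pinned down, Theorem~\ref{thm:hn} applies directly and yields that $\overrightarrow{\mathcal M^\delta}$ is Ramsey.
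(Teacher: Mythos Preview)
Your proposal is correct and follows the same overall strategy as the paper: apply Theorem~\ref{thm:hn} with $\mathcal K=\overrightarrow{\mathcal M^\delta}$, use the Ne\v set\v ril--R\"odl theorem to get Ramseyness of the ambient class $\mathcal R$, and use Theorem~\ref{thm:metriccompletion} together with the bound $n=\delta$ on non-metric cycles to verify local finiteness.

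The one place where you diverge from the paper is the choice of $\mathcal R$. The paper simply takes $\mathcal R=\overrightarrow{\mathcal G^\delta}$, the class of all ordered (not necessarily complete) $\delta$-edge-labelled graphs; these are already irreducible because the linear order relates every pair, so Theorem~\ref{thm:nr} applies directly (forbidding the doubly-labelled two-vertex structures). Your choice of $\mathcal R$ as the ordered \emph{complete} $\delta$-edge-labelled graphs is also valid, but it forces you into the extra ``complete the Ramsey witness and check that no new copies of $\str A$ matter'' argument. That argument is fine (since $\str A,\str B$ are complete, every copy of $\str A$ inside the monochromatic $\str B$ already lived in the uncompleted witness), but it is work you can avoid by observing that the order alone already gives irreducibility. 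Everything else---the handling of the order (acyclicity from the homomorphism-embedding, then extend arbitrarily), the single-label-per-pair deduction, and the invocation of Theorem~\ref{thm:metriccompletion}---matches the paper.
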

\begin{proof}
Assume that $\delta$ is fixed. Let $\overrightarrow{\mathcal G^\delta}$ denote the class of all finite linearly ordered $\delta$-edge-labelled graphs. By the Ne\v set\v ril-R\"odl theorem $\overrightarrow{\mathcal G^\delta}$ is Ramsey: Let $L$ be the binary language $L=(\rel{}{1}, \ldots, \rel{}{\delta})$. Then $\overrightarrow{\mathcal G^\delta}$ is the class of all linearly ordered $L$-structures such that every pair of vertices is in at most one $L$-relation. A pair of vertices which is in more than one relation is clearly irreducible, so one can forbid all such pairs.

To prove that $\overrightarrow{\mathcal M^\delta}$ is Ramsey, we will use Theorem~\ref{thm:hn} where $\overrightarrow{\mathcal G^\delta}$ plays the role of $\mathcal R$ (the order is linear, so all structures in $\overrightarrow{\mathcal G^\delta}$ are irreducible) and $\overrightarrow{\mathcal M^\delta}$ plays the role of $\mathcal K$. It remains to verify local finiteness.

Let $n$ be the maximum number of vertices of any member of $\mathcal F^\delta$. Fix a $\str C_0\in \overrightarrow{\mathcal M^\delta}$ and let $\str C$ be a structure in the language $(\leq, \rel{}{1}, \ldots, \rel{}{\delta})$ with a homomorphism-embedding $h\colon \str C\rightarrow\str C_0$.

The existence of $h$ ensures that every pair of vertices of $\str C$ is in at most one $\rel{}{i}$ relation and that the relation $\leq_\str{C}$ is acyclic. Because in $\overrightarrow{\mathcal M^\delta}$ we allow all linear orders, we can complete the order arbitrarily to a linear order and forget about it. It remains to fill in the missing distances. Let $\str G\in \mathcal G^\delta$ be the reduct of $\str C$ which one gets by forgetting the order. By the assumption, every substructure of $\str C$ on at most $n=\delta$ vertices has a completion in $\overrightarrow{\mathcal M^\delta}$, which implies that every substructure of $\str G$ on at most $n=\delta$ vertices has a completion in $\mathcal M^\delta$. But this means that $\str G\in \Forb(\mathcal F^\delta)$ and hence there is a completion of $\str G$ in $\mathcal M^\delta$ which combined with the linear order gives a completion of $\str C$ in $\overrightarrow{\mathcal M^\delta}$.
\end{proof}

\section{The shortest path completion}
Now it suffices to prove Theorem~\ref{thm:metriccompletion}. We will do it by finding an explicit completion procedure and proving that it has all the desired properties. This folklore construction was used in Ne\v set\v ril's paper on the Urysohn space~\cite{Nevsetvril2007} as well as for example Solecki's~\cite{solecki2005} and Vershik's~\cite{vershik2008} results on EPPA. To the author's best knowledge, the name \emph{shorest path completion} was first used in the paper of Hubi\v cka and Ne\v set\v ril~\cite{Hubicka2016}.

Let $\str G = (V, E, d)$ be a $\delta$-edge-labelled graph. By a walk $W$ (from $v_1$ to $v_k$) in $\str G$ we mean a sequence of vertices $v_1, v_2, \ldots, v_k \in V$ such that $v_iv_{i+1}\in E$ for every $1\leq i \leq k-1$. We further define $\|W\| = \sum_{i=1}^{k-1} d(v_i, v_i+1)$ and call it the \emph{length of $W$} (in $\str G$). A \emph{path} $P$ is a walk such that the vertices are distinct.

\begin{definition}[Shortest path completion]\label{def:spp}
Let $\str G = (V, E, d)$ be a $\delta$-edge-labelled graph. Define $d' \colon  V^2 \rightarrow \{0, \ldots, \delta\}$ as
$$d'(x,y) = \min\left(\delta, \min_{\substack{P \text{ path in }\str G\\\text{from }x\text{ to }y}} \|P\|\right).$$
We call $(V, d')$ the \emph{shortest path completion} of $\str G$.
\end{definition}

Note that if $\str G$ is disconnected and $x,y$ are in different components then $d'(x,y) = \delta$ and also if $xy\in E$, then the edge $xy$ is also a path from $x$ to $y$. The \emph{completion} part of the name is a little bit misleading --- from the definition it is not even clear that $d'|_E = d$. In fact, we shall see that if $(V,d')$ has no completion in $\mathcal M^\delta$, then $d'|_E \neq d$, but otherwise  whenever $\str G$ has a completion in $\mathcal M^\delta$ then $(V, d')$ really is a completion of $\str G$ in $\mathcal M^\delta$.

\begin{prop}\label{prop:shortestpath}
Fix a $\delta\geq 2$. Let $\str G = (V, E, d)$ be a $\delta$-edge-labelled graph and $(V, d')$ be its shortest path completion. Then the following hold:
\begin{enumerate}[label=(\arabic*)]
\item \label{prop:shortestpath:0} If $\str G \in \mathcal F^\delta$ is a non-metric cycle, then $\str G$ has no completion in $\mathcal M^\delta$;
\item \label{prop:shortestpath:0.5} $\str G$ contains a path from $x$ to $y$ of length at most $a$ if and only if it contains a walk from $x$ to $y$ of length at most $a$;
\item \label{prop:shortestpath:1} $(V, d') \in \mathcal M^\delta$;
\item \label{prop:shortestpath:1.5} There exists $\str F\in \mathcal F^\delta$ with a homomorphism $f\colon \str F\rightarrow \str G$ if and only if there exists $\str F'\in \mathcal F^\delta$ with an injective homomorphism $g\colon \str F'\rightarrow \str G$.
\item \label{prop:shortestpath:2} $d'|_E = d$ if and only if $\str G \in \Forb(\mathcal F^\delta)$;
\item \label{prop:shortestpath:3} if $(V, d'')\in \mathcal M^\delta$ is a completion of $\str G$, then for every $x,y\in V$ it holds that $d'(x,y)\geq d''(x,y)$;
\item \label{prop:shortestpath:4} $\str G$ has a completion in $\mathcal M^\delta$ if and only if $\str G \in \Forb(\mathcal F^\delta)$; and
\item \label{prop:shortestpath:5} if $\str G \in \Forb(\mathcal F^\delta)$ then the shortest path completion preserves automorphisms.
\end{enumerate}
\end{prop}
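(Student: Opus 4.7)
The plan is to establish the eight parts in the listed order, each building on its predecessors, with (3), (4), and (7) carrying the real content and the rest following routinely.

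For (1), iterating the triangle inequality along the short side of the cycle in any hypothetical completion yields $\ell \leq \sum_i a_i$, contradicting the defining inequality of $\mathcal{F}^\delta$. Part (2) is the standard walk-to-path reduction by excising closed subwalks; since every label is positive, length only drops. For (3), reflexivity and symmetry are immediate from the definition, and the triangle inequality follows by concatenating shortest paths from $x$ to $y$ and from $y$ to $z$ into a walk, invoking (2) to upgrade it to a path, and letting the $\min(\delta,\cdot)$ cap absorb overflow as well as the case of pairs in different connected components (where one of the two summands is already $\delta$).

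Part (4) is an induction on the number of edges of $\str{F}$: given a non-injective homomorphism $f\colon \str{F}\to \str{G}$ with $f(v_i)=f(v_j)$ for some non-adjacent $i,j$, splitting $\str{F}$ at the collision produces two closed walks whose lengths sum to $\ell+\sum_i a_i$; the piece containing the long edge has length $\ell$ plus a strict subsum of the $a_i$'s, hence is still non-metric, and we recurse. Part (5) then unfolds in both directions: a strict drop $d'(x,y)<d(x,y)=\ell$ on an edge $xy\in E$ witnesses a shorter path which, closed by $xy$, forms a non-metric cycle as a subgraph of $\str{G}$; conversely, any homomorphism from $\str{F}\in\mathcal{F}^\delta$ sends the long edge to some $xy\in E$ and the remaining edges to a walk from $x$ to $y$ of total length $\sum_i a_i<\ell$, whence (2) gives $d'(x,y)<d(x,y)$. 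Part (6) follows from iterated triangle inequality in any completion $(V,d'')$ applied along an arbitrary path in $\str{G}$, then minimising.

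Part (7) is a synthesis: the forward direction comes from (1) in its walk-form (which requires no injectivity, since the triangle inequality iterates along any sequence), applied inside any putative completion; the converse uses (3) and (5) to see that $(V,d')$ is itself a completion whenever $\str{G}\in \Forb(\mathcal{F}^\delta)$. Finally (8) is immediate: $d'$ is defined purely in terms of labelled paths, so any automorphism $\alpha$ of $\str{G}$ bijects paths from $x$ to $y$ onto paths from $\alpha(x)$ to $\alpha(y)$ length-preservingly, giving $d'(\alpha(x),\alpha(y))=d'(x,y)$. The one place requiring careful bookkeeping is part (4), where one must track which side of the split contains the long edge in order to preserve the strict inequality; this is the only point where the strictness of $\ell>\sum_i a_i$ (as opposed to weak) is actually used.
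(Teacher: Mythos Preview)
Your proof is correct and follows essentially the same route as the paper's, with the same dependency structure among the eight parts. The only notable deviation is in part~(4): the paper dispatches it in one line by observing that a homomorphic image of a non-metric cycle is a long edge $\ell$ together with a walk of length $\sum a_i < \ell$, then invokes~(2) to shortcut the walk to a path and re-close it into a smaller non-metric cycle embedded injectively; you instead run an induction on the number of edges, splitting at a collision and recursing on the piece containing the long edge. Both arguments are standard and equally short once written out—your version is slightly more self-contained (it does not lean on~(2)), while the paper's is marginally cleaner in that it avoids checking that the split piece is still a bona fide cycle of length~$\geq 3$ (which does hold, since the collision cannot occur at vertices sharing a neighbour with distinct labels, but you do not explicitly say so).
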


\begin{proof}
We will prove the claims one by one.

\paragraph{\ref{prop:shortestpath:0}} Enumerate the vertices as $V = \{v_1, \ldots, v_k\}$ such that $v_iv_{i+1}\in E$ (we identify $v_{k+1} = v_1$) and $d(v_1,v_k) > \sum_{i=1}^{k-1} d(v_i, v_{i+1})$. For a contradiction suppose that the statement is not true, that is, $\str G$ has a completion $\str G' = (V, d') \in \mathcal M^\delta$ and among all such cycles, take $\str G$ to have the smallest number of vertices $k$. Clearly $k > 3$, because if $k = 3$ then $\str G$ is a non-metric triangle which is not in $\mathcal M^\delta$. Now take $\str G'$ and look at the triangle $v_1, v_2, v_3$. As $\str G'$ is a completion of $\str G$, we have $d'(v_1, v_2) = d(v_1, v_2)$ and $d'(v_2, v_3) = d(v_2, v_3)$, and because $\str G'\in \mathcal M^\delta$, it holds that $d'(v_1, v_3) \leq d'(v_1, v_2) + d'(v_2, v_3)$. But this is a contradiction with the minimality of $k$, as $\str G'$ is also the completion of the cycle $\str G'' = (\{v_1, v_3, \ldots, v_k\}, d'')$ where $d''(v_1, v_3) = d'(v_1, v_3)$ and $d''$ is equal to $d$ elsewhere, which is a non-metric cycle with fewer vertices than $\str G$.

\paragraph{\ref{prop:shortestpath:0.5}} A path is a walk, so we only need to prove the other implication. A walk can be transformed to a path by replacing all subwalks $v_i, \ldots, v_j$ where $v_i = v_j$ with just the vertex $v_i$. As all distances are non-negative, this procedure doesn't increase the length of the walk.
\smallskip

\paragraph{\ref{prop:shortestpath:1}} Suppose for a contradiction that $(V, d') \notin \mathcal M^\delta$. Clearly $d'(x,x) = 0$ for all $x$ and $d'$ is symmetric. Hence there are $x,y,z\in V$ such that $d'(x,y) > d'(y,z) + d'(z,x)$. Thus, in particular, $d'(y,z), d'(z,x) < \delta$. By definition of $d'$ there is a path $P_1$ in $\str G$ from $y$ to $z$ such that $\|P_1\| = d'(y,z)$ and a path $P_2$ from $z$ to $x$ such that $\|P_2\| = d'(z,x)$. But this means that the concatenation of $P_1$ and $P_2$ is a walk $P$ from $x$ to $y$ going through $z$ of length at $d'(y,z) + d'(z,x)$, which is together with~\ref{prop:shortestpath:0.5} a contradiction with the definition of $d'(x,y)$.
\smallskip

\paragraph{\ref{prop:shortestpath:1.5}} Follows by~\ref{prop:shortestpath:0.5} as a homomorphic image of a cycle $\str F$ with edges $\ell, a_1, \ldots, a_k$ such that $\ell > \sum a_i$ is just the long edge $\ell$ plus a walk consisting of the edges $a_1, \ldots, a_k$. As $a_i < \ell$ for all $a_i$, after shortcutting the walk to a path and adding back the edge $\ell$, we get a cycle from $\mathcal F$ with no vertices glued.
\smallskip

\paragraph{\ref{prop:shortestpath:2}} If $\str G \notin \Forb(\mathcal F^\delta)$ then by~\ref{prop:shortestpath:1.5} there is a non-metric cycle as a non-induced subgraph of $\str G$. Let the vertices of this cycle be $v_1, v_2, \ldots, v_k$ with $d(v_1, v_k) > \sum_{i=1}^{k-1} d(v_i, v_{i+1})$. This means that $d'(v_1, v_k) \leq \sum_{i=1}^{k-1} d(v_i, v_{i+1}) < d(v_1, v_k)$.

Now suppose that there are vertices $x,y$ such that $d'(x,y) < d(x,y)$. This means, by the definition of the shortest path completion, that there is a path from $x$ to $y$ of length strictly less than $d(x,y)$, hence it forms a non-metric cycle together with the edge $xy$.
\smallskip

\paragraph{\ref{prop:shortestpath:3}} Take any completion $\str G'' = (V, d'') \in \mathcal M^\delta$ of $\str G$ and look at an arbitrary pair of vertices $x,y$. If $d'(x,y) = \delta$, then the statement holds. Otherwise $d'(x,y) < \delta$ and this means that there is a path from $x$ to $y$ in $\str G$ of length $d'(x,y)$. As this path is also in $\str G''$, then $d''(x,y) \leq d'(x,y)$, because otherwise $\str G''$ would contain a non-metric cycle which would together with~\ref{prop:shortestpath:0} contradict $\str G''\in \mathcal M^\delta$.
\smallskip

\paragraph{\ref{prop:shortestpath:4}} If $\str G \in \Forb(\mathcal F^\delta)$, then the shortest path completion $\str G'\in \mathcal M^\delta$ is a completion of $\str G$ by~\ref{prop:shortestpath:1} and~\ref{prop:shortestpath:2}. If $\str G \notin \Forb(\mathcal F^\delta)$, then by~\ref{prop:shortestpath:1.5} $\str G$ contains a non-metric cycle as a non-induced subgraph and hence by~\ref{prop:shortestpath:0} has no completion in $\mathcal M^\delta$ (if it had one, then the non-metric cycle would have a completion in $\mathcal M^\delta$ as well).
\smallskip

\paragraph{\ref{prop:shortestpath:5}} Let $\alpha\colon \str G\rightarrow \str G$ be an automorphism of $\str G$. We will show that the same map is also an automorphism of $\str G'$. For this it suffices to check for all $x,y$ that $d'(x,y) = d'(\alpha(x), \alpha(y))$. But if there is a path of length $a$ from $x$ to $y$ in $\str G$, then as $\alpha$ is an automorphism there has to be a path of length $a$ from $\alpha(x)$ to $\alpha(y)$ and vice versa. This also implies that there is no path between $x$ an $y$ if and only if there is no path between $\alpha(x)$ and $\alpha(y)$.

It follows that the shortest path connecting $x$ and $y$ has the same length as the shortest path connecting $\alpha(x)$ and $\alpha(y)$, hence $d'(x,y) = d'(\alpha(x), \alpha(y))$.
\end{proof}

Theorem~\ref{thm:metriccompletion} now follows from Proposition~\ref{prop:shortestpath}.
\chapter{The magic completion algorithm}\label{ch:magiccompletion}
A natural thing to do is to try the shortest path completion algorithm for the classes $\Aclass$, because when $K_1 = 1$, $K_2 = \delta$ and $C_0,C_1\geq 3\delta+1$ the class $\Aclass$ is precisely $\mathcal M^\delta$. But unfortunately this does not work in general as the shortest path completion tends to produce triangles of long perimeter. To fix this, we now devise a completion which, instead of being the completion where all edges are as long as possible, will essentially be the completion where all edges are as close to some suitable \emph{magic} parameter $M$ as possible. Specifically, we prove:

\begin{theorem} \label{thm:magiccompletion}
Let $(\delta, K_1,K_2,C_0,C_1)$ be an admissible sequence of parameters.
Suppose that $\str{G}=(G,d)\in \mathcal G^\delta$ has a completion into $\Aclass$. Then for every magic parameter $M$ (see Definitions~\ref{defn:magicdistance} and~\ref{defn:magiccompletion}) --- which always exists --- there is a completion $\overbar{\str{G}}=(G,\bar{d}) \in\Aclass$ of $\str G$ such that it is optimal in the following sense:
Let $\str{G}'=(G,d')\in\Aclass$ be an arbitrary completion of $\str{G}$ in  $\Aclass$, then 
for every pair of vertices $u,v\in G$ one of the following holds:
\begin{enumerate}
 \item $d'(u,v) \geq \bar{d}(u,v) \geq M$,
 \item $d'(u,v) \leq \bar{d}(u,v) \leq M$,
 \item the parameters satisfy Case \ref{IIb}, $d'(u,v) \neq M$ and $\bar{d}(u,v) = M-1$.
\end{enumerate}

Finally, every automorphism of $\str{G}$ is also an automorphism of $\overbar{\str{G}}$.
\end{theorem}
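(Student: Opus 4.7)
The plan is to construct $\overbar{\str G}$ explicitly by running the magic completion algorithm outlined in the introduction, and then to verify all three conclusions of the theorem simultaneously by a single induction on the stage counter. First I would set up the inputs to the algorithm: fix a magic parameter $M$ (appealing to Definition~\ref{defn:magicdistance} and a short case-check using Theorem~\ref{thm:admissible} that such an $M$ always exists for admissible parameters), then define the binary operation $\oplus$ on $\{1,\ldots,\delta\}$ so that $a\oplus b$ is the value closest to $M$ making the triangle $a,b,a\oplus b$ permitted by all four bounds of Definition~\ref{defn:numerical}, and finally the permutation $\pi_M$ with $\pi_\delta=M$, arranged so that stage $i$ processes $\pi_i$ at increasing distance from $M$. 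The algorithm then proceeds in $\delta$ stages: at stage $i$, for every pair $u,v$ still missing an edge and every common neighbour $w$ with $\bar d(u,w)\oplus \bar d(w,v)=\pi_i$, set $\bar d(u,v)=\pi_i$.

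The core of the proof is a three-part invariant maintained after every stage $i$:
\begin{enumerate}
\item[(a)] the partial structure $\str G_i$ contains no forbidden triangle of $\Aclass$;
\item[(b)] for every completion $\str G'=(G,d')\in\Aclass$ of $\str G$ and every pair $u,v$ for which $\bar d(u,v)$ is already defined, either $d'(u,v)\geq \bar d(u,v)\geq M$, or $d'(u,v)\leq \bar d(u,v)\leq M$, or the Case~\ref{IIb} exception $\bar d(u,v)=M-1$ applies;
\item[(c)] every $\alpha\in\Aut(\str G)$ preserves the currently defined portion of $\bar d$.
\end{enumerate}
Parts (b) and (c) are the easy ones: (c) holds because the algorithm depends only on input edge-labels, common neighbours, the global operation $\oplus$, and the global permutation $\pi_M$, all of which are invariant under $\alpha$; (b) is immediate from the definition of $\oplus$, since $\pi_i=\bar d(u,v)$ is by construction the permitted completion of the fork that triggered the assignment which is closest to $M$, so any competing $d'(u,v)$ must lie on the same side of $M$ and be at least as far away --- except in Case~\ref{IIb}, where $\oplus$ is forced to skip $M$ itself, giving exactly the exceptional clause of the statement.

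The main obstacle is part (a) in the inductive step. I would argue by contradiction: suppose stage $i$ produces a forbidden triangle $u,v,w$ in which $\bar d(u,v)=\pi_i$ is the most recently added edge, arising from some fork $u,x,v$. Using the witness completion $\str G'\in\Aclass$ guaranteed by hypothesis, I would compare $d'(u,v)$ with $\pi_i$ and apply part (b) of the invariant to the older edges $\bar d(u,w),\bar d(v,w),\bar d(u,x),\bar d(x,v)$ to locate them precisely on the ``$M$-side'' determined by $\str G'$; chasing the resulting inequalities then forces $\str G'$ itself to contain a triangle violating one of the bounds $K_1,K_2,C_0,C_1$, contradicting $\str G'\in\Aclass$. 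The argument splits along the parity of the perimeter and along the admissibility cases~\ref{IIa}, \ref{IIb}, \ref{III} of Theorem~\ref{thm:admissible}, and it is here that the specific numerical content of the admissibility conditions is genuinely used; I expect Case~\ref{IIb} to be the most delicate subcase, because of the ``jump over $M$'' phenomenon.

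Once the invariant survives stage $\delta$, part (a) gives $\overbar{\str G}\in\Aclass$, part (b) yields the pair-by-pair optimality claim verbatim, and part (c) gives automorphism preservation, completing the proof.
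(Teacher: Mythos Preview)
Your overall plan --- run the magic completion algorithm and argue by induction on the stage counter --- is the paper's approach. But there is a genuine gap: your three-part invariant is missing parity control, and without it part~(a) cannot be closed. The $K_1$-, $K_2$-, and $C_1$-bounds are parity-sensitive. Suppose the algorithm produces a $K_1$-forbidden triangle with edges $a,b,c<K_1\le M$, perimeter odd and ${}<2K_1+1$. Your invariant~(b) gives $a'\le a$, $b'\le b$, $c'\le c$ in the witness completion $\str G'$, hence $a'+b'+c'<2K_1+1$ --- but you still need $a'+b'+c'$ \emph{odd}, and ``same side of $M$'' says nothing about that. The same obstruction arises for the $K_2$- and $C_1$-bounds. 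The paper handles this with a separate Parity Lemma (Lemma~\ref{lem:sameparity}): for edges with $\bar d(u,v)\le\min(K_1,M-1)$ or $\bar d(u,v)\ge\max(K_2,M+1)$, the parities of $\bar d(u,v)$ and $d'(u,v)$ agree (with one explicit exception in Case~\ref{III}). That lemma has its own inductive proof, using the $K_1$- and $K_2$-bounds in $\str G'$, and is then invoked alongside optimality in each of the five correctness lemmas (one per bound type). You would need to add parity preservation as a fourth clause to your invariant and prove it; it does not fall out of ``chasing the resulting inequalities''.

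Two smaller remarks. Your claim that (b) is ``immediate from the definition of $\oplus$'' is too quick: $a\oplus b$ is closest-to-$M$ among completions of the fork $(\bar d(u,x),\bar d(x,v))$, but $d'(u,v)$ is constrained by $(d'(u,x),d'(x,v))$, not by $(\bar d(u,x),\bar d(x,v))$. You must first apply the inductive hypothesis to the fork edges and then use that $\str G'$ is metric (for $\mathcal F^+$ and $\mathcal F^-$ forks) or satisfies the $C$-bound (for $\mathcal F^C$ forks); the paper spells this out in Lemma~\ref{lem:bestcompletion}. Also, the paper decouples the pieces: optimality and parity are proved first, with no reference to whether the intermediate $\str G_i$ contain forbidden triangles, and correctness (your~(a)) is deduced only for the final $\overbar{\str G}$. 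Since (a) is not needed to prove (b), (c), or parity, bundling them into one simultaneous induction buys nothing and obscures the actual dependency structure.
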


Throughout the chapter, we assume that $\delta, K_1,K_2,C_0,C_1$ are fixed admissible parameters as given by Theorem~\ref{thm:admissible}.
Recall that we denote $C=\min(C_0,C_1),$ and $C'=\max(C_0,C_1).$

Recall also that $\Aclass$ was defined in Definition~\ref{defn:numerical} as the class of all metric spaces that satisfy some constraints on its triangles, i.e. its 3-element subspaces. In the following it will often be more convenient to think of $\Aclass$ as the class of metric spaces that do not embed any triangle violating those constraints --- we are going to refer to such triangles as \emph{forbidden triangles} (cf. Chapter~\ref{ch:preliminaries}). We slightly abuse notation and use the term triangle for both triples of vertices $u,v,w$ and for triples of edges $a,b,c$ with $a = d(u,v)$, $b = d(v,w)$, $c = d(u,w)$. By Definition~\ref{defn:numerical} a triangle $a,b,c$ with $a\leq b\leq c$ is forbidden if it satisfies one of the following conditions: 
\begin{description}
\setlength\itemsep{0em}

\item[Non-metric:] $a+b<c$;
\item[$K_1$-bound:] $a+b+c < 2K_1+1$ and $a+b+c$ is odd;
\item[$K_2$-bound:] $b+c\geq 2K_2+a$ and $a+b+c$ is odd and $a\leq b,c$;
\item[$C_1$-bound:] $a+b+c\geq C_1$ and $a+b+c$ is odd;
\item[$C_0$-bound:] $a+b+c\geq C_0$ and $a+b+c$ is even.
\item[$C$-bound:] If $|C_0-C_1| = 1$, the $C_1$-bound and $C_0$-bound can be expressed together as $a+b+c\geq C$, where $C=\min(C_0,C_1)$.
\end{description}
Triangles that are not forbidden will be called \emph{allowed}.

\section{The magic completion algorithm}
\label{sec:algorithm}

Let $\mathcal D=\{1,2,\ldots \delta\}^2$ be a collection of (ordered) pairs. It is more natural to consider unordered pairs, but notationally easier to consider ordered pairs. We will refer to elements of $\mathcal D$ as {\em forks}. If $\str G$ is a $\delta$-edge-labelled graph, we will also refer to its triples triples of vertices $u,v,w$ such that $uv$ and $uw$ are edges and $vw$ is not an edge as \emph{forks}.

Consider the shortest path completion for $\mathcal M^\delta$ from Definition~\ref{def:spp}. There is an alternative formulation of this completion: For a fork $\vec{f}=(a,b)$, define $d^+(\vec{f})=\min(a+b,\delta)$. Proceed in steps numbered from $1$ to $\delta$ and in the $i$-th step look at all (incomplete) forks $\vec{f}$ such that $d^+(\vec{f}) = i$ and define the length of the missing edge to be $i$.

This algorithm proceeds by first adding edges of length 2, then edges of length 3 and so on up to edges of length $\delta$ and has the property that out of all metric completions of a given graph, every edge of the completion yielded by this algorithm is as close to $\delta$ as possible.

It makes sense to ask what happens if, instead of trying to make each edge as close to $\delta$ as possible, one tries to make each edge as close to some parameter $M$ as possible. For $M$ in a certain range, such an algorithm exists. For each fork $\vec{f}=(a,b)$ one can define $d^+(\vec{f}) = a+b$ and $d^-(\vec{f}) = |a-b|$, i.e. the largest and the smallest possible distance that can metrically complete the fork $\vec{f}$. The generalised algorithm will, again in steps, complete $\vec{f}$ by $d^+(\vec{f})$ if $d^+(\vec{f})<M$, by $d^-(\vec{f})$ if $d^-(\vec{f})>M$ and by $M$ otherwise. It turns out that there is a good permutation $\pi$ of $\{1,\ldots,\delta\}$, such that if one adds the distances in the order prescribed by the permutation, this generalised algorithm will produce a correct completion whenever one exists. It is easy to check that the choice $M=\delta$ and $\pi=\text{id}_\delta$ corresponds to the shortest path completion algorithm.

\begin{definition}[Completion algorithm]\label{defn:ftmcompletion}
Given $c\geq 1$, $\mathcal F\subseteq \mathcal D$, and
a $\delta$-edge-labelled graph $\str{G}=(G,d)\in \mathcal G^\delta$, we say that a $\delta$-edge-labelled $\str{G}'=(G,d')$ is the 
{\em $(\mathcal F,c)$-completion} of $\str{G}$ if $d'(u,v)=d(u,v)$ whenever $u,v$ is an edge of $\str{G}$
and $d'(u,v)=c$ if $u,v$ is not an edge of $\str{G}$ and there exist $(a,b)\in \mathcal F$, $w\in G$ such that $\{d(u,w),d(v,w)\}=\{a,b\}$. There are no other edges in $\str{G}'$.

Given $1\leq M\leq \delta$, a one-to-one function $t\colon \{1,2,\ldots,\delta\}\setminus \{M\}\to \mathbb N$ 
and a function $\mathbb F$
from $\{1,2,\ldots,\delta\}\setminus \{M\}$ to the power set of $\mathcal D$, we define the {\em $(\mathbb F,t,M)$-completion} of  $\str{G}$
as the limit of a sequence of edge-labelled graphs  $\str{G}_1, \str{G}_2,\ldots$ such that $\str{G}_1=\str{G}$ and $\str{G}_{k+1}=\str{G}_k$ if $t^{-1}(k)$ is undefined
and $\str{G}_{k+1}$ is the $(\mathbb F(t^{-1}(k)),t^{-1}(k))$-completion of $\str{G}_{k}$ otherwise, with every pair of vertices not forming an edge in this limit set to distance $M$.
\end{definition}

We will call the vertex $w$ from Definition \ref{defn:ftmcompletion} the {\em witness of the edge $u,v$}. The function $t$ is called the {\em time function} of the completion because edges of length $a$ are inserted to $\str{G}_{t(a)}$ the $t(a)$-th step of the completion. If for a $(\mathbb F, t, M)$-completion and distances $a,c$ there is a distance $b$ such that $(a,b)\in \mathbb F(c)$ (i.e. the algorithm might complete a fork $(a,b)$ with distance $c$), we say that {\em $c$ depends on $a$}.

\begin{definition}[Magic distances]
\label{defn:magicdistance}
Let $M\in\{1,2,\ldots, \delta\}$ be a distance. We say that $M$ is {\em magic} (with respect to $\Aclass$) if $$\max\left(K_1, \left\lceil\frac{\delta}{2}\right\rceil\right) \leq M \leq \min\left(K_2,\left\lfloor\frac{C-\delta-1}{2}\right\rfloor\right).$$
\end{definition}

Note that for primitive admissible parameters $(\delta, K_1,K_2,C_0,C_1)$ such an $M$ always exists.

\begin{observation}\label{obs:magicismagic}
The set $S$ of magic distances (with respect to $\Aclass$) is $$S=\left\{1\leq a \leq \delta : aab\text{ is allowed for all }1\leq b\leq \delta\right\}.$$
\end{observation}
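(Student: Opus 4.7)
The proof is a bound-by-bound unravelling of the two definitions. Specifically, the plan is to show that, for $M \in \{1,\ldots,\delta\}$, each of the four inequalities in Definition~\ref{defn:magicdistance} is equivalent to the assertion that no triangle of the shape $MMb$ with $1 \leq b \leq \delta$ violates the correspondingly named bound among $\{\text{Non-metric},\,K_1,\,K_2,\,C_0/C_1\}$ from Definition~\ref{defn:numerical}.

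First I would fix $M$ and observe that the perimeter of the triangle $MMb$ is $p = 2M+b$, so $p$ is odd iff $b$ is odd, and the minimum side is $b$ if $b \leq M$ and is $M$ if $b \geq M$. With this bookkeeping the non-metric triangle condition $M+M \geq b$ is equivalent to $b \leq 2M$, which holds for every $b \leq \delta$ iff $\delta \leq 2M$, i.e.\ iff $M \geq \lceil \delta/2\rceil$. The $K_1$-bound is avoided iff $2M+b \geq 2K_1+1$ for every odd $b \in \{1,\ldots,\delta\}$; the worst case is $b=1$, giving $M \geq K_1$.

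For the $K_2$-bound I would split according to whether $b \leq M$ or $b \geq M$. In the first subcase the two larger sides are $M, M$, so the bound reads $2M \geq 2K_2 + b$ and must fail for every odd $b \geq 1$; the worst case $b = 1$ yields the equivalence with $M \leq K_2$. In the second subcase the two larger sides are $M, b$, so the bound reads $b \geq 2K_2$; here I may restrict attention to metric triangles (the non-metric ones have already been handled), i.e.\ to $b \leq 2M \leq \delta$, and the inequality $b < 2K_2$ for the largest odd such $b$ follows from $M \leq K_2$ together with $M \geq \lceil\delta/2\rceil$ established in the first paragraph.

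The $C_0$- and $C_1$-bounds are jointly avoided iff $2M + b < C_{\mathrm{par}(b)}$ for every $b \in \{1,\ldots,\delta\}$, where $C_{\mathrm{par}(b)} = C_0$ if $b$ is even and $C_1$ if $b$ is odd. Maximising over $b$ and doing a case split on the parities of $\delta$ and of whichever of $C_0, C_1$ equals $C = \min(C_0,C_1)$, this condition collapses to the single integer inequality $2M + \delta + 1 \leq C$, which is exactly $M \leq \lfloor (C-\delta-1)/2 \rfloor$. Putting the four equivalences together yields the stated description of $S$. The only genuine technicality is the parity bookkeeping in the final step, since $C_0$ is even, $C_1$ is odd, and $\min(C_0,C_1)$ may have either parity, but this resolves uniformly once everything is written with floors.
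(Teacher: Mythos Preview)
Your argument is correct and follows the same bound-by-bound approach as the paper, which is terser and only spells out the direction $S \subseteq \{\text{magic}\}$ by exhibiting a single witness $b$ for each inequality, leaving the converse as ``follows from the definition of $\Aclass$''. One small slip: in the $K_2$ subcase with $b \geq M$ you wrote $b \leq 2M \leq \delta$, but the chain you need (and clearly intend) is $b \leq \delta \leq 2M \leq 2K_2$, whence odd $b$ satisfies $b < 2K_2$.
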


\begin{proof}
If a distance $a$ is in $S$, then $a\geq K_1$ (otherwise the triangle $aa1$ has perimeter $2a+1$, which is odd and smaller than $2K_1+1$, hence forbidden by the $K_1$ bound), $a\geq \left\lceil\frac{\delta}{2}\right\rceil$ (otherwise the triangle $aa\delta$ is non-metric), $a\leq \left\lfloor\frac{C-\delta-1}{2}\right\rfloor$ (otherwise the triangle $aab$ has perimeter $C$ for $b = C -2a \leq \delta$), and $a\leq K_2$ (otherwise the triangle $aa1$ has odd perimeter and $2a\geq 2K_2+1$, hence is forbidden by the $K_2$ bound). The other inclusion follows from the definition of $\Aclass$.
\end{proof}

Let $M$ be a magic distance and $x\in\{1,\ldots,\delta\}\setminus\{M\}$. Define $$\mathcal F^+_x = \left\{(a,b)\in \mathcal D : a+b=x\right\},$$ $$\mathcal F^-_x = \left\{(a,b)\in \mathcal D : |a-b|=x\right\},$$ $$\mathcal F^C_x = \left\{(a,b)\in \mathcal D : C-1-a-b=x\right\}.$$ We further denote
$$\mathbb F_M(x) =
\begin{cases} 
      \mathcal F^+_x\cup \mathcal F^C_x & \text{if }x < M \\
      \mathcal F^-_x & \text{if }x > M.
\end{cases}
$$
For a magic distance $M$, we also define the function $t_M\colon  \{1,\ldots,\delta\}\setminus \{M\} \rightarrow \mathbb N$ as
$$t_M(x) =
\begin{cases} 
      2x-1 & \text{if } x < M \\
      2(\delta-x) & \text{if }x > M.
\end{cases}
$$
\begin{figure}
\centering
\includegraphics{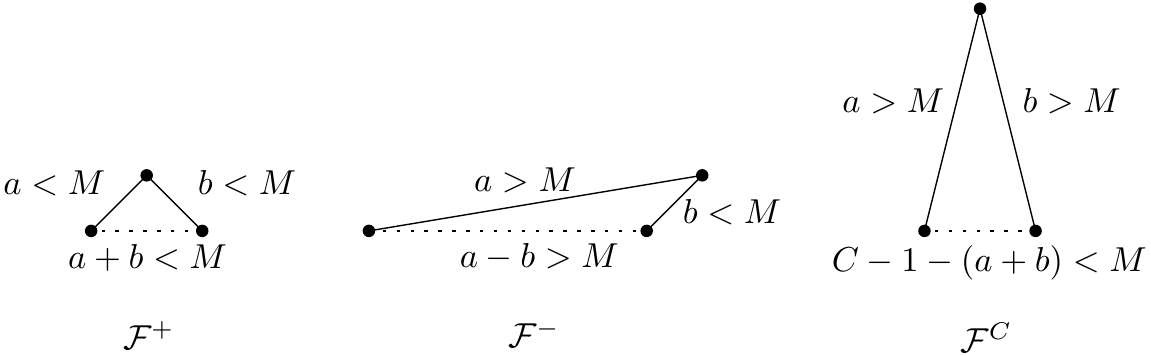}
\caption{Forks used by $\mathbb F_M$.}
\label{fig:Fforks}
\end{figure}
Forks and how they are completed according to $\mathbb F_M$ are schematically depicted in Figure~\ref{fig:Fforks}.

\begin{definition}[Completion with magic parameter $M$]
\label{defn:magiccompletion}
Let $M$ be a magic distance satisfying the following extra conditions:
\begin{enumerate}
\item If the parameters satisfy Case~\ref{III} with $K_1+2K_2 = 2\delta - 1$, then $M>K_1$;
\item if the parameters satisfy Case~\ref{III} and further $C'>C+1$ and $C=2\delta+K_2$, then $M<K_2$.
\end{enumerate}
We then call the $(\mathbb F_M,t_M,M)$-completion (of $\str{G}$) the {\em completion (of $\str{G}$) with magic parameter $M$}. 
\end{definition}

Our main goal of the following section is the proof of Theorem \ref{thm:magiccompletion} that shows that the completion of $\str{G}$ with magic parameter $M$ lies in $\Aclass$ if and only if $\str{G}$ has some completion in $\Aclass$.

The two extra conditions in Definition \ref{defn:magiccompletion} are a way to deal with certain extremal choices of admissible primitive parameters $(\delta, K_1,K_2,C_0,C_1)$. 

\begin{lemma}
For primitive parameters $(\delta, K_1,K_2,C_0,C_1)$ there is always an $M$ satisfying Definitions~\ref{defn:magicdistance} and \ref{defn:magiccompletion}.
\end{lemma}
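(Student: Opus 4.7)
The plan is to show that the interval defined in Definition~\ref{defn:magicdistance} is nonempty for all admissible parameters, and then verify that even after the two additional shrinking conditions from Definition~\ref{defn:magiccompletion} are imposed, an integer $M$ still exists. So set
\[
L = \max\!\left(K_1,\left\lceil\tfrac{\delta}{2}\right\rceil\right), \qquad
U = \min\!\left(K_2,\left\lfloor\tfrac{C-\delta-1}{2}\right\rfloor\right),
\]
and the goal is $L\le U$, together with the refinements $L+1\le U$ or $L\le U-1$ in the two excluded cases (and $L+1\le U-1$ when both hold).

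First I would verify $L\le U$ by checking the four comparisons $K_1\le K_2$, $\lceil\delta/2\rceil\le K_2$, $K_1\le\lfloor(C-\delta-1)/2\rfloor$, and $\lceil\delta/2\rceil\le\lfloor(C-\delta-1)/2\rfloor$. The first is acceptability. For the second, in Case~\ref{III} we use $3K_2\ge 2\delta$ and for Case~\ref{II} we combine $K_1+K_2\ge\delta$ with $K_1\le K_2$. For the third, in Case~\ref{III} we use $C\ge 2\delta+K_1+1$ together with $\delta\ge K_1$, and in Case~\ref{II} we rewrite $C-\delta-1=2K_1+2K_2-\delta$ and use $K_2\ge\lceil\delta/2\rceil$. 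For the fourth, in Case~\ref{III} the bound $C\ge 2\delta+2$ suffices, and in Case~\ref{II} the key observation is that $C=2K_1+2K_2+1\ge 2\delta+2$ together with integrality forces $K_1+K_2\ge\delta+1$, which is exactly what is needed (this is the parity point that almost gets missed).

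Next I would deal with the two extra conditions separately. For condition~(1) in Case~\ref{III} with $K_1+2K_2=2\delta-1$, the improved admissibility bound $C\ge 2\delta+K_1+2$ lifts the floor bound by one, while the equality $K_1+2K_2=2\delta-1$ together with $3K_2\ge 2\delta$ implies $K_2\ge K_1+1$; combining these, $\min(K_2,\lfloor(C-\delta-1)/2\rfloor)\ge K_1+1$, and the lower end is unaffected, so the shortened interval $[\max(K_1+1,\lceil\delta/2\rceil),U]$ is still nonempty. For condition~(2), with $C'>C+1$ and $C=2\delta+K_2$, the hypothesis gives $K_2\ge K_1+1$ directly from $C\ge 2\delta+K_1+1$; I would then show $K_2-1\ge\lceil\delta/2\rceil$ by using $K_2\ge\lceil 2\delta/3\rceil$ and checking that $\lceil 2\delta/3\rceil\ge\lceil\delta/2\rceil+1$ for all $\delta\ge 4$. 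The case where both conditions fire simultaneously combines the two inequalities ($C\ge 2\delta+K_1+2$ and $C=2\delta+K_2$) to give $K_2\ge K_1+2$, which is precisely what is needed for $K_1+1\le K_2-1$.

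The one place to be careful—and the main obstacle I foresee—is the boundary case $\delta=3$ with $K_2=\lceil\delta/2\rceil=2$ under condition~(2), since there the generic inequality $\lceil 2\delta/3\rceil\ge\lceil\delta/2\rceil+1$ fails. I would handle this by ruling it out from admissibility directly: $\delta=3$, $K_2=2$, and $C=2\delta+K_2=8$ force $K_1=1$ (via $C\ge 2\delta+K_1+1$), but then $K_1+2K_2=5=2\delta-1$ triggers the strengthened requirement $C\ge 2\delta+K_1+2=9$, contradicting $C=8$. Hence the hypothesis of condition~(2) is simply incompatible with $\delta=3$ and $K_2=2$, and for $\delta\ge 4$ the inequality above takes care of things. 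Throughout, the arguments are just careful parity/floor bookkeeping, with the admissibility conditions of Theorem~\ref{thm:admissible} plugging in exactly where needed.
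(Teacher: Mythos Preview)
Your plan is correct and follows essentially the same route as the paper. The paper's proof actually skips your first block entirely (it takes the nonemptiness of the basic magic interval for granted, having asserted it without proof just after Definition~\ref{defn:magicdistance}), so your explicit verification of the four pairwise inequalities $L\le U$ is an improvement in completeness rather than a different idea. For the two extra conditions your arguments match the paper's almost line by line: in condition~(1) both you and the paper deduce $K_1<K_2$ from $K_1+2K_2=2\delta-1$ together with $3K_2\ge 2\delta$, and then use the strengthened bound $C\ge 2\delta+K_1+2$ to push the upper endpoint past $K_1$; in condition~(2) both arguments reduce to showing $\lceil\delta/2\rceil<K_2$ via $3K_2\ge 2\delta$ and then dispose of the residual case $\delta=3$, $K_2=2$ by exactly the contradiction you describe ($C=8$ versus $C\ge 9$). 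The simultaneous case is also handled identically, via $K_2\ge K_1+2$.
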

\begin{proof}

For Case~\ref{III} with $K_1+2K_2 = 2\delta-1$, we proceed as follows: From admissibility, we have
\[\begin{split}K_1+2K_2&=2\delta-1\\ 3K_2&\geq2\delta\end{split}\] so we conclude that $K_1<K_2$. From this information and $K_1+2K_2=2\delta-1$, we derive $K_1<\frac{2}{3}\delta$. We know that $\delta-1 \geq \frac{2}{3}\delta$ for $\delta \geq 3$ and $K_1\leq \delta-2$, so $\left\lfloor\frac{C-\delta-1}{2}\right\rfloor \geq \left\lfloor\frac{\delta+K_1+1}{2}\right\rfloor\geq \frac{\delta+K_1}{2}$. Hence, $K_1 < \left\lfloor\frac{C-\delta-1}{2}\right\rfloor$ and there is always a magic number greater than $K_1$.\bigskip

In Case~\ref{III} with $C'>C+1$ and $C=2\delta+K_2$, we know from admissibility that $C>2\delta + K_1$, so $K_2 > K_1$. Now we need $\left\lceil\frac{\delta}{2}\right\rceil < K_2$. For $\delta \geq 3$, the inequality $\left\lceil\frac{\delta}{2}\right\rceil \leq \frac{2}{3}\delta$ holds with equality only for $\delta = 3$. Admissibility tells us $3K_2\geq 2\delta$. Now, if $\delta > 3$ or $K_2\neq \frac{2}{3}\delta$, it follows that $\left\lceil\frac{\delta}{2}\right\rceil < K_2$. 

The only remaining possibility is $\delta=3$ and $K_2=2$, which implies $C=8$ and $K_1=1$, which gives us $2K_2+K_1 = 5 = 2\delta-1$. The admissibility condition $C\geq 2\delta +K_1+2$ then yields $C\geq 9$, a contradiction. Hence there always is a magic number smaller than $K_2$.

If both these situations occur simultaneously, then we further require $M$ with $K_1<M<K_2$. But that follows as $C=2\delta+K_2$ and whenever $K_1+2K_2=2\delta-1$, from admissibility we have $C\geq 2\delta+K_1+2$, hence $K_2\geq K_1+2$.
\end{proof}

Observe that the algorithm only makes use of $C$, $\delta$ and $M$. The
interplay of individual parameters of algorithm is schematically depicted in
Figure~\ref{fig:algorithm}.
\begin{figure}
\centering
\includegraphics{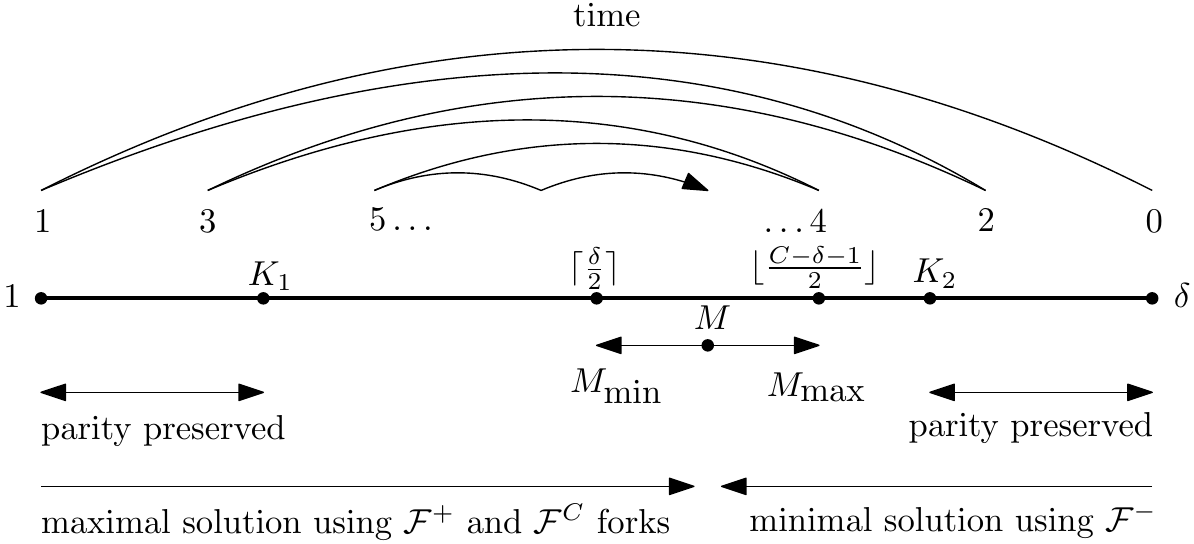}
\caption{A sketch of the main parameters of the completion algorithm, the Optimality Lemma~\ref{lem:bestcompletion} and the Parity Lemma~\ref{lem:sameparity}.}
\label{fig:algorithm}
\end{figure}

\begin{example}[Case~\ref{IIb}]
In our proofs, Case~\ref{IIb} will often form a special case.
The smallest (in terms of diameter) set of acceptable parameters that is in Case \ref{IIb} is: $$\delta=5, C=C_1 = 13, C^\prime=C_0 = 16, K_1 = K_2 = \frac{2\delta-1}{3} = 3.$$ Here $M=3$, and it is the only choice for a magic number.

Forbidden triangles are those that are non-metric (113, 114, 115, 124, 125, 135, 225), or forbidden by the $K_1$-bound $(111,122)$, the $K_2$-bound (144, 155, 245), or the $C_1$-bound (355, 445, 555). There are no triangles forbidden by the $C_0$-bound.
\begin{figure}
\centering
\includegraphics{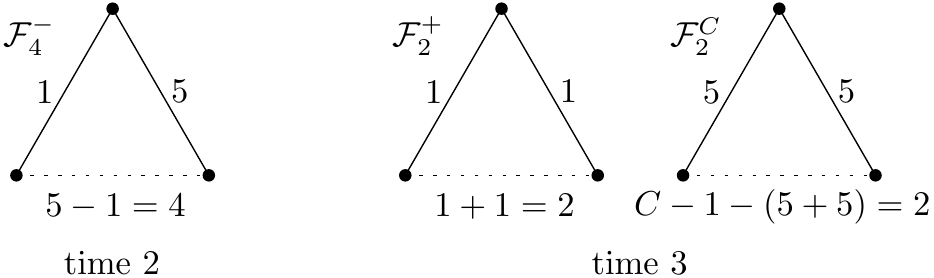}
\caption{Forks considered by the algorithm to complete to $\mathcal A^5_{3,3,16,13}$ with $M=3$.}
\label{fig:forks}
\end{figure}
\begin{table}[t]
\centering
\begin{tabular}{|c|c|c|c|c|c|}
\hline
&$j=1$&$j=2$&$j=3$&$j=4$&$j=5$\\ \hline
$i=1$&\textbf{2} &  $1,\textbf{3}$  &  $2,\textbf{3},4$    & $\textbf{3},5$   		& \textbf{4} 		\\ 
$i=2$&					&  $2,\textbf{3},4$&  $1,2,\textbf{3},4,5$& $2,\textbf{3},4$ 		& $\textbf{3},5$ 	\\ 
$i=3$&					&  							&  $1,2,\textbf{3},4,5$& $1,2,\textbf{3},4,5$	& $2,\textbf{3},4$	\\ 
$i=4$&					&  							&  									&  $2,\textbf{3},4$		& $1,\textbf{3},5$	\\ 
$i=5$&					&  							&  									&  									& $\textbf{2},4$		\\ \hline
\end{tabular}
\caption{Possible ways to complete $(i,j)$ forks, the bold number is the completion with magic parameter $M = 3$.}
\label{tab:forks}
\end{table}
Table~\ref{tab:forks} lists all possible completions of forks, with the completion preferred by our algorithm in bold type. Completions of forks in this class are depicted in Figure~\ref{fig:forks}.
Notably, the magic distance $M=3$ is chosen for all forks except $(1,1)$, which is completed by $d^+((1,1)) = 2$, $(1,5)$, which is completed by $d^-((1,5)) = 4$, and $(5,5)$, which is a $C$-bound case. 
Those cases are the only forks where $M=3$ cannot be chosen, so instead the algorithm chooses the nearest possible completion. What makes Case~\ref{IIb} special is the situation where one can choose $M-1$ or $M+1$ but not $M$ when completing a fork (for $\delta=5$ it is the fork $(5,5)$, as both the triangles $5,5,2$ and $5,5,4$ are allowed, while $5,5,3$ is forbidden by the $C_1$ bound; this behavious is going to force us to deal with some corner cases later). 

The algorithm will thus effectively run in three steps.  First (at time 2) it will complete all forks $(1,5)$ with distance 4, next (at time 3) it will complete all forks $(1,1)$ and $(5,5)$ with distance 2 and finally it will turn all non-edges into edges of distance 3. Examples of runs of this algorithm are given later, see Figures~\ref{fig:1555} and~\ref{fig:11555}.
\end{example}

\section{What do forbidden triangles look like?}
\label{sec:forbtriangles}
The majority of the proofs in the following sections assume that the completion algorithm with magic parameter $M$ introduces some forbidden triangle and then we argue that the triangle must be forbidden in any completion, hence the input structure has no completion into $\Aclass$. In such an argument it will be helpful to know how the different types of forbidden triangles relate to the magic parameter $M$. We will use $a,b,c$ for the lengths of the edges of the triangle and without loss of generality assume $a\leq b\leq c$. All conclusions are summarised in Figure~\ref{fig:Ftriangles}.
\begin{figure}[t]
\centering
\includegraphics{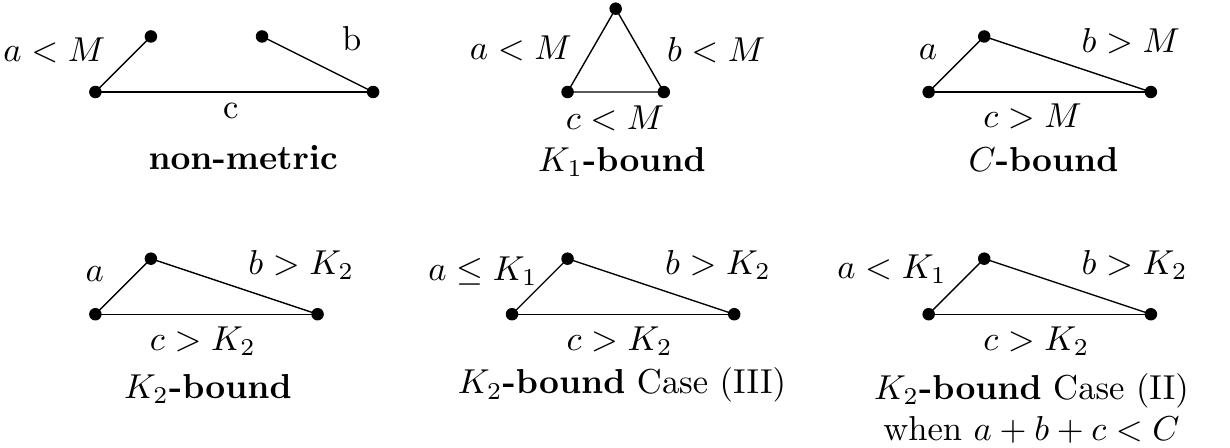}
\caption{Types of forbidden triangles.}
\label{fig:Ftriangles}
\end{figure}

\begin{description}
\item[non-metric:]
If $a+b<c$, then $a < M$, because otherwise $a+b\geq 2M \geq \delta$.

\item[$K_1$-bound:]
If $a+b+c < 2K_1+1$, $a+b+c$ is odd and $abc$ is metric, then $a,b,c < K_1\leq M$, because if $c\geq K_1$, then from the metric condition $a+b\geq c\geq K_1$ and hence $a+b+c\geq 2K_1$, for odd $a+b+c$ this means $a+b+c\geq 2K_1+1$.

\item[$C$-bound:]
If $a+b+c\geq C$ then $b,c > M$. Suppose for a contradiction that $a,b\leq M$. We then have $a+b\geq C-c\geq C-\delta$, but on the other hand $a+b\leq 2M\leq 2\left\lfloor \frac{C-\delta-1}{2} \right\rfloor\leq C-\delta-1$, which together yield $C-\delta-1\geq C-\delta$, a contradiction. Note that in some cases $C'\neq C+1$, but this observation still holds, as it only uses $a+b+c\geq C$.

\item[$K_2$-bound:]
If $abc$ is a metric triangle with odd perimeter, then $abc$ breaks the $K_2$ condition if and only if $b+c\geq 2K_2+a+1$ (the 1 on the right side comes from $a+b+c$ being odd and all distances being integers). Then $b,c>K_2$, because if $b\leq K_2$, from metricity we have $c\leq a+b$, hence $a+2K_2\geq (a+b)+b \geq c+b\geq 2K_2+a+1$, a contradiction.

Moreover, in Case~\ref{III} of Theorem~\ref{thm:admissible} we have $a \leq K_1$ because if $a > K_1$, we have $b+c\geq 2K_2+a+1 > 2K_2+K_1+1$ and from admissibility conditions for Case~\ref{III} we have $2K_2+K_1\geq 2\delta-1$, which gives $b+c>2\delta$, a contradiction. (Note that if $2K_2+K_1>2\delta-1$, we have $a<K_1$.)

Finally if $a+b+c < C$ (which is stronger than not being forbidden by the $C$ bound, as it also includes the $C'>C+1$ cases) and we are in Case~\ref{II} (where $C=2K_1+2K_2+1$), we get $a < K_1$, because if $a\geq K_1$, we would get $a+b+c\geq 2K_2+2a+1\geq 2K_2+2K_1+1 = C$.

Note that later we shall refer to all the corner cases mentioned in these paragraphs.
\end{description}

\section{Basic properties of the algorithm}
Recall the definition of $t_M$ and $\mathbb F_M$:
$$t_M(x) =
\begin{cases} 
      2x-1 & \text{if } x < M \\
      2(\delta-x) & \text{if }x > M.
\end{cases}
$$
$$\mathbb F_M(x) =
\begin{cases} 
      \mathcal F^+_x\cup \mathcal F^C_x & \text{if }x < M \\
      \mathcal F^-_x & \text{if }x > M.
\end{cases}
$$
Intuitively, the function $\mathbb F_M$ selects the forks that will be completed to triangles with an edge of type $t^{-1}_M(x)$ at time $x$. At time 0 it looks for forks that can be completed with distance $\delta$, then with distance 1, jumping back and forth on the distance set and approaching $M$ (cf. Figure \ref{fig:algorithm}). Observe that all forks that cannot be completed with $M$ are in some $\mathbb F_M(x)$.

Now we shall precisely state and prove that $t_M$ gives a suitable injection for the algorithm, as claimed before Definition~\ref{defn:ftmcompletion}.

\begin{lemma}[Time Consistency Lemma]\label{lem:expandtime}
Let $a,b$ be distances different from $M$. If $a$ depends on $b$, then $t_M(a) > t_M(b)$.
\end{lemma}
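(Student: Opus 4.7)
The plan is to unpack the hypothesis that $a$ depends on $b$: this means there is a distance $c$ with $(b,c)\in \mathbb{F}_M(a)$, so the definition of $\mathbb{F}_M(a)$ splits into the cases $a<M$ (where the fork lies in $\mathcal{F}^+_a\cup \mathcal{F}^C_a$) and $a>M$ (where it lies in $\mathcal{F}^-_a$), and in each situation one reads off a relation between $b$ and $a$, then checks that the two branches of the definition of $t_M$ respect the claimed inequality.

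First I would treat $a<M$. If $(b,c)\in \mathcal{F}^+_a$, then $b+c=a$ with $c\geq 1$ forces $b\leq a-1<M$, so $t_M(b)=2b-1\leq 2a-3<2a-1=t_M(a)$. If instead $(b,c)\in \mathcal{F}^C_a$, I would first show $b>M$: assuming $b\leq M$, the bound $c\leq \delta$ together with $2M\leq C-\delta-1$ (which is the magic bound $M\leq \lfloor(C-\delta-1)/2\rfloor$) yields $a=C-1-b-c\geq C-1-M-\delta\geq M$, contradicting $a<M$ (and $a\neq M$). With $b>M$ in hand, $t_M(b)=2(\delta-b)$, and using $a+b=C-1-c\geq C-1-\delta\geq \delta+1$ (via $C\geq 2\delta+2$) we get $2a+2b\geq 2\delta+2>2\delta+1$, i.e.\ $2a-1>2(\delta-b)$, which is $t_M(a)>t_M(b)$.

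Next I would treat $a>M$, where $\mathbb{F}_M(a)=\mathcal{F}^-_a$, so $|b-c|=a$. If $b>c$, then $b=a+c\geq a+1>M$, so $t_M(b)=2(\delta-b)<2(\delta-a)=t_M(a)$. If $c>b$, then $c=a+b$, and since $a>M\geq \lceil\delta/2\rceil$ implies $a>\delta/2$, the inequality $b=c-a\leq \delta-a<\delta/2\leq M$ gives $b<M$; hence $t_M(b)=2b-1$ and $b\leq \delta-a$ yields $t_M(a)=2(\delta-a)\geq 2b>2b-1=t_M(b)$.

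The whole argument is essentially routine inequality manipulation once the right case split is made; the single non-trivial point is the subcase $(b,c)\in \mathcal{F}^C_a$ under $a<M$, where one must use the full strength of the magic-distance bound $M\leq \lfloor(C-\delta-1)/2\rfloor$ to rule out $b\leq M$, together with the admissibility bound $C\geq 2\delta+2$ to push $a+b$ past $\delta$.
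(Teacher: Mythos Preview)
Your proof is correct and follows essentially the same approach as the paper: both split into the three fork types $\mathcal F^+$, $\mathcal F^C$, $\mathcal F^-$ and verify the time inequality by the same elementary manipulations, using the magic bound $2M\leq C-\delta-1$ and the admissibility bound $C\geq 2\delta+2$ in the $\mathcal F^C$ subcase. The only cosmetic difference is that in the $\mathcal F^C$ case you derive $a\geq M$ from $b\leq M$ to reach a contradiction, whereas the paper rearranges the same inequalities to obtain $C-\delta-1 < C-\delta-1$ directly.
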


\begin{proof}
We consider three types of forks used by the algorithm:
\begin{description}
\item[$\mathcal F^+$:]
If $a<M$ and $\mathcal F^+_a\neq\emptyset$, then $b<a<M$, hence $t_M(b) < t_M(a)$.

\item[$\mathcal F^C$:]
If $a < M$ and $\mathcal F^C_a\neq\emptyset$, then we must have $b,c > M$ ($(b,c)\in \mathcal F^C_a$ with $a=C-1-b-c$). Otherwise, if for instance $b\leq M$, then $C-\delta-1\leq C-1-c = a+b < 2M \leq 2\left\lfloor \frac{C-\delta-1}{2} \right\rfloor$, a contradiction. As $C\geq 2\delta+2$ (we are dealing with the primitive case), we obtain the inequality $b = (C-1)-c-a \geq (2\delta + 1) - \delta - a = \delta+1-a$. Hence $t_M(b) \leq 2(a-1) < 2a-1 = t_M(a)$.

\item[$\mathcal F^-$:]
Finally, we consider the case where $a > M$ and $\mathcal F^-_a\neq\emptyset$. Then either $a = b-c$, which implies $b>a>M$ and thus $t_M(b)<t_M(a)$, or $a = c-b$, which means $b = c-a\leq \delta-a$. Because of $a>M\geq \left\lceil\frac{\delta}{2}\right\rceil$, we have $b<M$. So $t_M(b) \leq 2(\delta-a) - 1 < 2(\delta-a) = t_M(a)$.
\end{description}
\end{proof}

\begin{lemma}[$\mathbb F_M$ Completeness Lemma]\label{lem:misgood}
Let $\str{G}\in \mathcal G^\delta$ and $\overbar{\str{G}}$ be its completion with magic parameter $M$. If there is a forbidden triangle (w.r.t. $\Aclass$) or a triangle with perimeter at least $C$ in $\overbar{\str{G}}$ with an edge of length $M$, then this edge is also present in $\str{G}$.
\end{lemma}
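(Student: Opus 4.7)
The plan is to argue by contradiction: suppose there is a triangle $u, v, w$ in $\overbar{\str G}$ with $d(u,v) = M$ that is either forbidden with respect to $\Aclass$ or has perimeter at least $C$, but $uv$ is not an edge of $\str G$. Since the completion algorithm only produces edges of length $\neq M$ at the indexed stages $t_M(x)$ and fills the remaining non-edges with $M$ at the end, $uv$ must have been added at this final step. First I dispose of the case where a second edge of the triangle also has length $M$: then the triangle has the form $M, M, c$ for some $c \leq \delta$, which is allowed by Observation~\ref{obs:magicismagic}, and its perimeter $2M + c \leq 2M + \delta \leq C - 1$ (using the magic bound $2M \leq C - \delta - 1$) is below $C$, contradicting the hypothesis. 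So I may assume $a := d(u,w) \neq M$ and $b := d(v,w) \neq M$, meaning $uw$ and $vw$ are edges of the pre-final graph and $(u, v, w)$ is a fork with apex $w$.

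Because the algorithm did not complete $uv$ at any earlier stage, the fork $(a, b)$ lies in no $\mathbb F_M(x)$. Unpacking the definition yields three constraints: $a + b \geq M$ (otherwise $(a,b) \in \mathcal F^+_{a+b}$ with $a+b < M$), $|a - b| \leq M$ (otherwise $(a,b) \in \mathcal F^-_{|a-b|}$ with $|a-b| > M$), and, using $a + b \leq 2\delta \leq C - 2$ valid in the primitive case, $a + b + M \leq C - 1$ (otherwise $(a,b) \in \mathcal F^C_{C-1-a-b}$ with $1 \leq C-1-a-b < M$). Hence the triangle $a, b, M$ is metric and has perimeter strictly below $C$, so it is not $C$-bound forbidden and not non-metric. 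For the $K_1$-bound, the perimeter $a + b + M \geq 2M \geq 2K_1$ by magic, so an odd perimeter strictly less than $2K_1 + 1$ is impossible.

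The core of the argument is ruling out the $K_2$-bound by case analysis on the position of $M$ in the sorted triangle. If $M$ is the minimum, the discussion in Section~\ref{sec:forbtriangles} shows that any $K_2$-forbidden triangle with perimeter $< C$ has minimum $\leq K_1$ in Case~\ref{III}, and $< K_1$ in Case~\ref{II} as well as in Case~\ref{III} with $K_1 + 2K_2 > 2\delta - 1$; combined with $M \geq K_1$, the strict sub-cases give an immediate contradiction, while the remaining sub-case (Case~\ref{III} with $K_1 + 2K_2 = 2\delta - 1$) would force $M = K_1$, which is excluded by the first extra condition of Definition~\ref{defn:magiccompletion}. If $M$ lies strictly between $a$ and $b$, the $K_2$-bound $b + M \geq 2K_2 + a$ combined with $b \leq a + M$ forces $M \geq K_2$, hence $M = K_2$ and $b = a + M$, giving an even perimeter contradicting the odd-perimeter requirement. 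If $M$ is the maximum, then $b + M \leq 2M \leq 2K_2$ forces $a \leq 0$, impossible. Thus the triangle $a, b, M$ is neither forbidden nor of perimeter $\geq C$, contradicting the hypothesis. The main obstacle is precisely this $K_2$ case analysis, where the extra conditions in Definition~\ref{defn:magiccompletion} must be invoked to rule out the extremal magic choice $M = K_1$ in the minimum sub-case; without them the analysis would not close.
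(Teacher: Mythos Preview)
Your argument is correct and mirrors the paper's proof: dispose of the $MMc$ case (you are in fact more explicit than the paper in checking its perimeter is below $C$), then use the fact that the fork $(a,b)$ was never triggered to obtain $a+b\geq M$, $|a-b|\leq M$, and $a+b+M<C$, and dispatch each forbidden-triangle type via Section~\ref{sec:forbtriangles} and the extra condition of Definition~\ref{defn:magiccompletion}, exactly as the paper does. The one step you leave implicit is the inference ``the fork $(a,b)$ lies in no $\mathbb F_M(x)$'': this requires the Time Consistency Lemma~\ref{lem:expandtime} to guarantee that the edges $uw,vw$ of lengths $a,b\neq M$ are already present at the stage when $\mathbb F_M(x)$ is processed; the paper invokes that lemma explicitly in each of its cases.
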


Observe that for $C'\neq C+1$, this lemma is talking not only about forbidden triangles, but about all triangles with perimeter at least $C$.

\begin{proof}
By Observation \ref{obs:magicismagic} no triangle of type $aMM$ is forbidden, so suppose that there is a forbidden triangle $abM$ in $\overbar{\str{G}}$ such that the edge of length $M$ is not in $\str{G}$. For convenience define $t_M(M) = \infty$, which corresponds to the fact that edges of length $M$ are added in the last step.

\begin{description}
\item[non-metric:] If $abM$ is non-metric then either $a+b<M$ or $|a-b|>M$. By Lemma \ref{lem:expandtime} we have in both cases that $t_M(a+b)$ (respectively, $t_M(|a-b|)$) is greater than both $t_M(a)$ and $t_M(b)$. Therefore the completion algorithm would chose $a+b$ (resp, $|a-b|$) as the length of the edge instead of $M$.

\item[$K_1$-bound:] Now that we know that $abM$ is metric, we also know that it is not forbidden by the $K_1$ bound, because $M\geq K_1$.

\item[$C$-bound:] If $a+b+M\geq C$ (which includes all the triangles forbidden by $C_0$ or $C_1$ bounds), then $t_M(C-1-a-b)>t_M(a),t_M(b)$ by Lemma \ref{lem:expandtime}, so the algorithm would set $C-1-a-b$ instead of $M$ as the length of the third edge.

\item[$K_2$-bound:] Finally we deal with the $K_2$ bound. Suppose that $abM$ is metric, its perimeter is less than $C$, and it is forbidden by the $K_2$ bound. From Section \ref{sec:forbtriangles} we have that the two long edges have to be longer that $K_2$, and the shortest edge is at most $K_1$ with equality only in Case~\ref{III} with $K_1+2K_2=2\delta-1$.

As $M\leq K_2,$ we know that $M$ is the shortest edge. But also $M\geq K_1$, hence this situation can happen only when $K_1$ is the length of the shortest edge, which is only in Case~\ref{III} with $K_1+2K_2=2\delta-1$. But from Definition~\ref{defn:magiccompletion} we have in this case $M>K_1$. Hence this situation never occurs.
\end{description}
\end{proof}

It may seem strange that the algorithm does not differentiate between $C_0$ and $C_1$. The following observation justifies this
by showing that in the case where $C'>C+1$, these bounds have a relatively limited effect on the run of the algorithm.
\begin{observation}\label{obs:FCforks}
If $C'>C+1$, then either $\mathcal F^C_x$ is empty for all $x < M$ or the parameters satisfy $\ref{IIb}$. In the latter case, only $\mathcal F^C_{M-1} = \{ (\delta,\delta) \}$ is non-empty. Furthermore, in this case $t_M(M-1)$ is the maximum of the time-function. This implies that $(\delta,\delta)$-forks are completed to $M-1$ in the penultimate step of the completion algorithm.
\end{observation}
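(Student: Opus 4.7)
The plan is to analyze exactly when $\mathcal{F}^C_x$ can be non-empty for $x<M$, and then to handle the two admissibility cases in turn. Unpacking the definition, $\mathcal{F}^C_x\neq\emptyset$ requires $a,b\in\{1,\dots,\delta\}$ with $a+b=C-1-x$, which, using $C\geq 2\delta+2$, amounts to $C-2\delta-1\leq x\leq C-3$. So $\mathcal{F}^C_x\neq\emptyset$ for some $x<M$ is equivalent to the single inequality $M\geq C-2\delta$.

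In Case~\ref{III} with $C'>C+1$, admissibility gives $C\geq 2\delta+K_2$; together with the extra requirement $M<K_2$ from Definition~\ref{defn:magiccompletion} (in the boundary case $C=2\delta+K_2$) and the standing bound $M\leq K_2$ this forces $M\leq C-2\delta-1$, so $\mathcal{F}^C_x$ is empty for every $x<M$. In Case~\ref{II} with $C'>C+1$ we must be in Case~\ref{IIb}. Here $K_1=K_2$ and $3K_2=2\delta-1$, and the magic-distance interval $[\max(K_1,\lceil\delta/2\rceil),\min(K_2,\lfloor(C-\delta-1)/2\rfloor)]$ collapses to the single value $M=K_1=K_2=(2\delta-1)/3$; this is a short parameter check using the $\delta\geq 5$ we get from $\delta\equiv 2\pmod 3$ and $\delta\geq 3$.

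With $M$ so pinned down, I would plug $C=2K_1+2K_2+1=4M+1$ into $C-2\delta$ to get $C-2\delta=M$, which is the heart of the second claim: for $x=M-1$ the equation $a+b=C-1-(M-1)=2\delta$ has the unique solution $(a,b)=(\delta,\delta)$, and for any $x\leq M-2$ we would need $a+b\geq 2\delta+1$, which is impossible. Hence $\mathcal{F}^C_{M-1}=\{(\delta,\delta)\}$ and all smaller-index $\mathcal{F}^C_x$ are empty.

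It remains to check that $t_M(M-1)=2M-3$ exceeds every other value of $t_M$. For $x<M-1$ this is immediate from $t_M(x)=2x-1\leq 2M-5$; for $x>M$ we have $t_M(x)\leq 2(\delta-M-1)$, so the comparison reduces to $M\geq (2\delta+1)/4$. Substituting $M=(2\delta-1)/3$ this becomes $8\delta-4\geq 6\delta+3$, i.e.\ $\delta\geq 3.5$, which holds since in Case~\ref{IIb} we have $\delta\geq 5$. The only real obstacle is bookkeeping: keeping straight the extra side-conditions of Definition~\ref{defn:magiccompletion} while ruling out Case~\ref{III}, after which everything reduces to direct parameter arithmetic.
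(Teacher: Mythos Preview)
Your proof is correct and follows essentially the same approach as the paper's: both arguments dispose of Case~\ref{III} using $C\geq 2\delta+K_2$ together with the side condition $M<K_2$ from Definition~\ref{defn:magiccompletion}, then in Case~\ref{IIb} pin down $M=K_1=K_2=(2\delta-1)/3$, compute that the only nonempty $\mathcal F^C_x$ with $x<M$ is $\mathcal F^C_{M-1}=\{(\delta,\delta)\}$, and finish by checking $t_M(M-1)>t_M(M+1)$ via $4M>2\delta+1$ for $\delta\geq 5$. Your reformulation of the nonemptiness condition as the single inequality $M\geq C-2\delta$ is a slightly cleaner way to organize the same computation.
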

\begin{proof}
Consider a fork $(a,b) \in \mathcal F^C_x$ and the cases where $C'>C+1$ is allowed.

In Case~\ref{III} with $C'>C+1$ we have (by admissibility) $C\geq 2\delta+K_2$, so $x = C-1-a-b\geq K_2-1$ with equality only for $C=2\delta+K_2$. From the extra condition for a magic parameter in Definition~\ref{defn:magiccompletion}, we get that $M < K_2$.

In Case~\ref{IIb} we have $M=K_2=K_1=\frac{2\delta-1}{3}$, hence $C=2K_1+2K_2+1 = 2\delta+K_2$, thus again we have $C-1-a-b\geq K_2-1$. This means that the only fork in $\mathcal F^C_{M-1}$ is going to be $(\delta,\delta)$, which will be completed by $K_2-1=M-1$.

In order to see that $t_M(M-1)$ is maximal, it is enough to check $t_M(M+1) < t_M(M-1)$. We have $3M=3K_2=2\delta-1$, so by definition $t_M(M-1) = 2M-3$ and $t_M(M+1) = 2\delta-2M-2$, so we want $2M-3>2\delta-2M-2$, or $4M>2\delta+1$ which is true for $\delta\geq 5$ and this always holds in Case \ref{IIb}. So $t_M(M-1) > t_M(M+1)$ and therefore $t_M(M-1)>t_M(a)$ for any $a$ different from $M$ and $M-1$. 
\end{proof}

\begin{lemma}[Optimality Lemma]\label{lem:bestcompletion}
Let $\str{G}=(G,d)\in \mathcal G^\delta$ such that it has a completion in $\Aclass$. Denote by $\overbar{\str{G}}=(G,\bar d)$ the completion of $\str{G}$ with magic parameter $M$ and let $\str{G}'=(G,d')\in\Aclass$ be an arbitrary completion of $\str{G}$. 
Then for every pair of vertices $u,v\in G$ one of the following holds:
\begin{enumerate}
 \item $d'(u,v) \geq \bar d(u,v) \geq M$,
 \item $d'(u,v) \leq \bar d(u,v) \leq M$,
 \item the parameters $(\delta,K_1,K_2,C_0,C_1)$ satisfy Case~\ref{IIb}, $\bar d(u,v) = M-1$, $d'(u,v) > M$ and $d'(u,v)$ has the same parity as $\bar d(u,v)$.
\end{enumerate}
\end{lemma}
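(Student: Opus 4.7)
The plan is to proceed by induction on the time function $t_M$, processing edges in the order the algorithm adds them. For the base case, any edge $uv \in \str{G}$ satisfies $d'(u,v)=\bar{d}(u,v)=d(u,v)$, trivially yielding case 1 or case 2. For the ``default'' edges of length $M$ (handled last, with $t_M(M)$ taken as $\infty$ as in Lemma~\ref{lem:misgood}), the conclusion is immediate: any value $d'(u,v) \geq M$ gives case 1 and any $d'(u,v) \leq M$ gives case 2.

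For the main inductive step, suppose $\bar{d}(u,v) = a \neq M$ was inserted via some fork $(b,c) = (\bar{d}(u,w), \bar{d}(v,w)) \in \mathbb F_M(a)$ with witness $w$. The Time Consistency Lemma ensures $t_M(b), t_M(c) < t_M(a)$, so the induction hypothesis applies to the pairs $u,w$ and $v,w$. A quick inspection of the defining equations of $\mathcal F^+_a$, $\mathcal F^-_a$, $\mathcal F^C_a$ together with the magnitude bounds derived in Section~\ref{sec:forbtriangles} shows $b,c \notin \{M, M-1\}$ in every fork type (in $\mathcal F^-$ the inequality $a \leq \delta - M + 1$ combined with $a > M$ forces $\delta < 5$, impossible in Case~\ref{IIb}), so the induction collapses cleanly: whenever $\bar{d} > M$, case 1 applies giving $d' \geq \bar{d}$, and whenever $\bar{d} < M$, case 2 applies giving $d' \leq \bar{d}$. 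I then split by fork type. If $(b,c) \in \mathcal F^+_a$, so $b+c=a<M$ and both $b,c<M$, the triangle inequality in $\str{G}'$ yields $d'(u,v) \leq d'(u,w) + d'(v,w) \leq b + c = a$, giving case 2. If $(b,c) \in \mathcal F^-_a$ with WLOG $b-c=a$, then $b > M > c$ as in the proof of Lemma~\ref{lem:expandtime}, and reverse triangle inequality gives $d'(u,v) \geq d'(u,w) - d'(v,w) \geq b - c = a$, giving case 1.

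The genuinely delicate case, and the main obstacle, is $(b,c) \in \mathcal F^C_a$, where $b,c > M$ and induction provides $d'(u,w) \geq b$ and $d'(v,w) \geq c$. In the uniform regime $C' = C+1$, any triangle of perimeter $\geq C$ is forbidden in $\Aclass$, so $d'(u,v) + b + c < C$ forces $d'(u,v) \leq C - 1 - b - c = a$, placing us in case 2. In the regime $C' > C+1$, Observation~\ref{obs:FCforks} forces Case~\ref{IIb} with $(b,c) = (\delta, \delta)$ and $a = M-1$, so $d'(u,w) = d'(v,w) = \delta$ and the triangle $d'(u,v), \delta, \delta$ must be admissible. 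Here $3M = 2\delta-1$ forces $M$ odd and $C = C_1 = 2\delta + M$; the $K_2$-bound then rules out odd $d'(u,v) \leq M$ while the $C_1$-bound rules out odd $d'(u,v) \geq M$, so odd values are fully excluded. Hence $d'(u,v)$ is even, which matches the parity of $M-1$, and splits into $d'(u,v) \leq M-1 = a$ (case 2) or $d'(u,v) > M$ (case 3). This parity bookkeeping in Case~\ref{IIb} is really the only nontrivial ingredient; everything else is a direct application of the triangle inequality together with the induction hypothesis.
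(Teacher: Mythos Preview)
Your proof is correct and follows essentially the same route as the paper: induction along the time function, case split by fork type, and the triangle inequality (resp.\ the $C$-bound) to push $d'$ to the correct side of $\bar d$. The paper frames this as a minimal-counterexample argument rather than a forward induction, and handles the ``witness edges are not in case~3'' issue by invoking Observation~\ref{obs:FCforks} (edges of length $M-1$ are added last in Case~\ref{IIb}) instead of your direct fork-by-fork check that $b,c\notin\{M,M-1\}$; both arguments are equivalent. Your Case~\ref{IIb} analysis at the end is slightly more explicit than the paper's (you separately invoke the $K_2$- and $C_1$-bounds to force $d'(u,v)$ even, whereas the paper compresses this to ``forbidden by the $C$ bound''), but the content is the same.
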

Note that for $\bar d(u,v)=M$ the statement trivially holds.

\begin{proof}
Suppose that the statement is not true and take any witness $\str{G}'=(G, d')$ (i.e. a completion of ${\str{G}}$ into $\Aclass$ such that there is a pair of vertices violating the statement). Recall that the completion with magic parameter $M$ is defined as a limit of a sequence $\str{G}_1, \str{G}_2, \ldots$ of edge-labelled graphs such that $\str{G}_1={\str{G}}$ and each two subsequent graphs differ at most by adding edges of a single distance.

Take the smallest $i$ such that in the graph $\str{G}_i = (G,d_i)$ there are vertices $u,v\in G$ with $d_i(u,v) > M$ and $d_i(u,v) > d'(u,v)$ or $d_i(u,v) < M$ and $d_i(u,v) < d'(u,v)$. Let $w\in G$ be the witness of $d_i(u,v)$. In Case~\ref{IIb}, by Observation~\ref{obs:FCforks} edges of length $M-1$ are added in the last step of our completion algorithm. Therefore we know that the distances $d_{i-1}(u,w)$ and $d_{i-1}(v,w)$ satisfy the optimality conditions in point 1 or 2.

We shall distinguish three cases, based on whether $d_{i}(u,v)$ was introduced by $\mathcal F^-$, $\mathcal F^+$ or $\mathcal F^C$:

\paragraph{$\mathcal F^-$ case} We have $M < d_i(u,v) = |d_{i-1}(u,w)-d_{i-1}(v,w)|$. Without loss of generality let us assume $d_{i-1}(u,w) > d_{i-1}(v,w)$, which means that $d_{i-1}(u,w) > M$ and $d_{i-1}(v,w) < M$ (as $M\geq \left\lceil\frac{\delta}{2}\right\rceil$). From the minimality of $i$, it follows that $d'(u,w) \geq d_{i-1}(u,w)$ and $d'(v,w)\leq d_{i-1}(v,w)$. Since $\str{G}'$ is metric we have $d_i(u,v) = d_{i-1}(u,w)-d_{i-1}(v,w) \leq d'(u,w)-d'(v,w) \leq d'(u,v)$, which is a contradiction.

\paragraph{$\mathcal F^+$ case} We have $M > d_i(u,v) = d_{i-1}(u,w)+d_{i-1}(v,w)$, hence $d_{i-1}(u,w),\allowbreak d_{i-1}(v,w)<M$. By the minimality of $i$ we have $d'(u,w)\leq d_{i-1}(u,w)$ and $d'(v,w)\leq d_{i-1}(v,w)$. Since $\str{G}'$ is metric, we get $d'(u,v)\leq d_i(u,v)$, which contradicts our assumptions.

\paragraph{$\mathcal F^C$ case} We have $M > d_i(u,v) = C-1-d_{i-1}(u,w)-d_{i-1}(v,w)$.

First suppose that $C'=C+1$. Recall that, by the admissibility of $C$, we have $C-1\geq 2\delta+1$ and $M\leq \left\lfloor \frac{C-\delta-1}{2} \right\rfloor$. Thus we get $d_{i-1}(u,w),d_{i-1}(v,w)>M$ (otherwise, if, say, $d_{i-1}(u,w)\leq M$, we obtain the contradiction $C-\delta-1\geq 2M > d_{i-1}(u,w)+d_i(u,v) = C-1-d_{i-1}(v,w) \geq C-\delta-1$). So again $d'(u,w)\geq d_{i-1}(u,w)$ and $d'(v,w)\geq d_{i-1}(v,w)$, which means that the triangle $u,v,w$ in $\str{G}'$ is forbidden by the $C$ bound, which is absurd as $\str{G}'$ is a completion of $\str{G}$ in $\Aclass$.

\medskip

It remains to discuss the case where $C' > C+1$. By Observation~\ref{obs:FCforks}, we only need to consider Case~\ref{IIb}, $d_{i}(u,v) = K_2-1 = M-1$ and $d_{i-1}(u,w) = d_{i-1}(v,w) = \delta$. By our assumption we have $d'(u,v) > d_i(u,v)$. Hence if $d'(u,v) \geq M$ it has to have the same parity as $d_i(u,v)$ (otherwise the triangle $u,v,w$ would be forbidden in $\str{G'}$ by the $C$ bound).
\end{proof}

Next we show that the algorithm initially runs in a way that preserves the parity of completions to $\Aclass$.

\begin{lemma}[Parity Lemma]\label{lem:sameparity}
Let $\str{G}$, $\overbar{\str G}$ and $\str{G'}$ be as in Lemma \ref{lem:bestcompletion}. Then for every pair of vertices $u,v\in G$ such that either $\bar d(u,v) \leq \min(K_1,M-1)$ or $\bar d(u,v) \geq \max(K_2,M+1)$, at least one of the following holds:
\begin{enumerate}
\item The parity of $\bar d(u,v)$ is the same as the parity of $d'(u,v)$;
\item the parameters come from Case~\ref{III}, $C=2\delta+K_1+1$, $C\neq  2K_1+2K_2+1$, $M>K_1>1$ and $\bar d(u,v)=K_1$.
\end{enumerate}
\end{lemma}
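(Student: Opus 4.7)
The plan is to argue by induction on the time step $t_M(\bar d(u,v))$ at which the magic completion algorithm inserts the edge $uv$, invoking the Optimality Lemma~\ref{lem:bestcompletion} to control the $d'$-values of the edges witnessing that insertion and then pinning down the parity of $d'(u,v)$ from the fact that the triangle $d'(u,v), d'(u,w), d'(v,w)$ must be an allowed triangle in $\Aclass$. The base case is an edge $uv$ of $\str{G}$, where $\bar d(u,v)=d'(u,v)=d(u,v)$ and parities match trivially. In the inductive step, let $w$ be the witness of $uv$ and set $a=\bar d(u,w)$, $b=\bar d(v,w)$; we split on whether $(a,b)$ belongs to $\mathcal F^+$, $\mathcal F^-$, or $\mathcal F^C$. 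In each case we first verify that $a$ and $b$ satisfy the hypothesis of the lemma, so that the inductive hypothesis gives $d'(u,w)\equiv a$ and $d'(v,w)\equiv b\pmod 2$, and then determine the parity of $d'(u,v)$ from the allowedness of the triangle in $\str{G}'$. The Case~\ref{IIb} clause of the Optimality Lemma is harmless here: by Observation~\ref{obs:FCforks} and the Time Consistency Lemma~\ref{lem:expandtime}, edges of length $M-1$ can never serve as sub-fork elements.

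For an $\mathcal F^+$-fork, $\bar d(u,v)=a+b\leq K_1$ and $a,b<M$, hence $a,b\leq \min(K_1,M-1)$. Optimality gives $d'(u,w)\leq a$ and $d'(v,w)\leq b$, so the perimeter of the $\str{G}'$-triangle is at most $2(a+b)\leq 2K_1$; an odd perimeter would be $\leq 2K_1-1$ and therefore forbidden by the $K_1$-bound. Thus the perimeter is even and $d'(u,v)\equiv d'(u,w)+d'(v,w)\equiv a+b=\bar d(u,v)\pmod 2$.

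For an $\mathcal F^-$-fork with $a>b$, we have $\bar d(u,v)=a-b\geq K_2$, so $a\geq K_2+1$ and $b\leq \delta-K_2$. The admissibility inequalities $K_1+K_2\geq \delta$ (Case~\ref{II}) and $K_1+2K_2\geq 2\delta-1$ (Case~\ref{III}) both imply $\delta-K_2\leq K_1$, so the inductive hypothesis applies to both $a$ and $b$ (note that $K_2=\delta$ would make this case vacuous). Optimality yields $d'(u,w)\geq a$ and $d'(v,w)\leq b$, and then
$$d'(u,v)+d'(u,w)\geq \bar d(u,v)+a=2a-b\geq 2K_2+b\geq 2K_2+d'(v,w).$$
Since $d'(v,w)\leq K_1\leq M$ is the smallest edge of the triangle in $\str{G}'$, an odd perimeter would trigger the $K_2$-bound and be forbidden, so the perimeter is even and the parity matches $a-b=\bar d(u,v)$.

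The $\mathcal F^C$-case is the main obstacle. From $a+b=C-1-\bar d(u,v)$, $\bar d(u,v)\leq K_1$, and $a,b\leq \delta$, one verifies that in Case~\ref{III} the admissibility bound $C\geq 2\delta+K_1+1$ forces $C=2\delta+K_1+1$, $\bar d(u,v)=K_1$, and $a=b=\delta$, while in Case~\ref{II} the identity $C-1=2K_1+2K_2$ transfers parity information directly and standard bookkeeping closes the argument. In Case~\ref{III}, Optimality then gives a triangle $(x,\delta,\delta)$ in $\str{G}'$ with $x\leq K_1$, whose allowedness is constrained by the $K_2$-bound ($x\leq 2(\delta-K_2)$ for odd $x$), the $K_1$-bound, and the $C_0$- and $C_1$-bounds. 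These bounds force $x\equiv K_1\pmod 2$ in every subcase except when $C=2\delta+K_1+1$, $K_1+2K_2\neq 2\delta$ (equivalently $C\neq 2K_1+2K_2+1$), and $K_1>1$ --- precisely the configuration of item~2 of the lemma --- where $x=K_1-1$ may be compatible with allowedness and parity need not be preserved. In every remaining subcase one of the bounds kills the wrong-parity candidate, completing the induction.
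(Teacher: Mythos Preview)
Your proposal is correct and follows the same strategy as the paper: induction on the time step, case split by fork type $\mathcal F^+$, $\mathcal F^-$, $\mathcal F^C$, and use of the Optimality Lemma together with the $K_1$-, $K_2$-, and $C$-bounds on the triangle $u,v,w$ in $\str{G}'$ to force the parity of $d'(u,v)$. One detail you elide is that when you invoke the inductive hypothesis on the witness edges $a,b$ you must rule out conclusion~2 of the Parity Lemma itself for them (not only the Case~\ref{IIb} clause of the Optimality Lemma, which is a separate issue); the paper handles this up front by showing that under the parameters of item~2 the value $K_1$ has maximal $t_M$ among all distances in the lemma's range, so an edge of that length is never a witness for a later step --- your case-by-case bounds on $a$ and $b$ happen to imply the same thing, but you should say so.
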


Note that we are only interested in distances not equal to $M$. 

\begin{proof}
Suppose that the statement is not true, and let $\str{G}'=(G, d')$ be a counterexample. Recall that the completion with magic parameter $M$ is defined as a limit of a sequence $\str{G}_1, \str{G}_2, \ldots$ of edge-labelled graphs  such that $\str{G}_1={\str{G}}$ and each two subsequent edge-labelled graphs differ at most by adding edges of a single distance.

Take the smallest $i$ such that in $\str{G}_i = (G,d_i)$ there are vertices $u,v\in G$ with $d_i(u,v)$ and $d'(u,v)$ not satisfying the lemma. Denote by $w$ a witness of the distance $d_i(u,v)$. As in the proof Lemma~\ref{lem:bestcompletion}, we can argue that $d_{i-1}(u,w)$ respectively $d_{i-1}(v,w)$ satisfy the optimality conditions 1 or 2 in Lemma~\ref{lem:bestcompletion}.

First we will show that the exceptional case 2 from the statement only happens at the very end of the induction, hence when using the induction hypothesis (or minimality of $i$), we can work only with the first part of the statement.

Suppose that the parameters satisfy Case~\ref{III} and further $C=2\delta+K_1+1$, $C\neq 2K_1+2K_2+1$ and $M>K_1>1$. We have $t_M(K_1) > t_M(a)$ for any distance $a<K_1$ and also, by admissibility, $t_M(K_1) > t_M(b)$ for any distance $b\geq K_2$ and $b>M$: since $t_M(K_1) = 2K_1-1$ and $t_M(b) \leq 2\delta-2K_2$, we need to verify that $2K_1-1 > 2\delta-2K_2$ and thus $2K_1+2K_2 > 2\delta+1$. By admissibility it follows $2K_2+K_1\geq 2\delta$ (when $2K_2+K_1=2\delta-1$, admissibility implies $C\geq 2\delta+K_1+2$), which give the desired bound.

\medskip

Next observe that if $K_1=1$ then from Lemma \ref{lem:bestcompletion} we have that whenever $\bar d(u,v)=1$ for some vertices $u,v$, then in any completion the edge has also length 1, hence also fixed parity.

As in the proof of Lemma \ref{lem:bestcompletion}, we will now distinguish three cases based on whether $d_i(u,v)$ was introduced due to $\mathcal F^+$, $\mathcal F^-$ or $\mathcal F^C$:

\paragraph{$\mathcal F^+$ case} In this case $d_i(u,v)<M$ and $d_i(u,v)=d_{i-1}(u,w)+d_{i-1}(w,v)$. Because of our assumption $d_i(u,v)\leq K_1$, the perimeter of the triangle $uvw$ in $\str{G}_i$ is even and at most $2K_1$. By Lemma~\ref{lem:bestcompletion} either the third possibility happened, hence $\bar{d}(u,v)$ has the same parity as $d'(u,v)$, or we have $d'(u,v)\leq d_{i}(u,v)$, $d'(u,w)\leq d_{i-1}(u,w)$ and $d'(v,w)\leq d_{i-1}(v,w)$, hence $d'(u,v)+d'(u,w)+d'(w,v)$ is odd and smaller than $2K_1+1$. Thus the triangle $uvw$ is forbidden by the $K_1$ bound in $\str{G}'$, a contradiction.

\paragraph{$\mathcal F^-$ case} Here $d_i(u,v) > M$ and without loss of generality we can assume that $d_i(u,v)=d_{i-1}(u,w)-d_{i-1}(w,v)$. Then the triangle $uvw$ has even perimeter with respect to $d_i$. By our assumption we have $d_i(u,v)\geq K_2$ and thus $d_{i-1}(u,w)>d_i(u,v)\geq K_2$ and $d_{i-1}(v,w)<M$.

This implies $d_i(u,v)+d_{i-1}(u,w) = 2d_i(u,v)+d_{i-1}(v,w)\geq 2K_2+d_{i-1}(v,w)$. From Lemma \ref{lem:bestcompletion} we get that $d'(u,v)\geq d_i(u,v)$, $d'(u,w)\geq d_{i-1}(u,w)$ and $d'(v,w)\leq d_{i-1}(v,w)$, hence also $d'(u,v)+d'(u,w) \geq 2K_2+d'(v,w)$ holds. Thus the triangle $uvw$ is forbidden by the $K_2$ bound in $\str{G}'$, a contradiction.

\paragraph{$\mathcal F^C$ case} Here $d_i(u,v)<M$ and $d_i(u,v)=C-1-d_{i-1}(u,w)-d_{i-1}(w,v)$. From our assumption it follows that $d_i(u,v)\leq K_1$.

In Case~\ref{III} we have $C\geq 2\delta+K_1+1$, hence $d_i(u,v) = K_1$ if and only if $C=2\delta+K_1+1$, $M>K_1$ and $d(u,w)= d(v,w) = \delta$; this case is treated in point 2.

It remains to consider Case~\ref{II}. Hence we can assume that $d'(u,v) < d(u,v)$ and these edges have different parity. Note that then the triangle $uvw$ has even perimeter $C-1$. By Lemma~\ref{lem:bestcompletion} we have $d'(u,w)+d'(v,w)\geq d_{i-1}(u,w)+d_{i-1}(v,w) = C-1-d(u,v) = 2K_1+2K_2-d_i(u,v)$. But as $d'(u,v) \leq d_i(u,v)\leq K_1$ we have $d'(u,w)+d'(v,w)\geq 2K_2+d'(u,v)$, so the triangle $uvw$ is forbidden by the $K_2$ bound in $\str{G}'$, a contradiction.
\end{proof}

\begin{lemma}[Automorphism Preservation Lemma]
\label{lem:aut}
Let $\str{G}\in \mathcal G^\delta$ and let $\overbar{\str{G}}$ be its completion with magic parameter $M$. Then every automorphism of $\str{G}$ is also an automorphism of $\overbar{\str{G}}$.
\end{lemma}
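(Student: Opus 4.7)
The plan is a straightforward induction on the stage of the completion algorithm. Recall that $\overbar{\str{G}}$ is constructed as the limit of a sequence $\str{G}_1 = \str{G}, \str{G}_2, \ldots, \str{G}_{2\delta-1}$, where $\str{G}_{k+1}$ is obtained from $\str{G}_k$ by adding edges of length $t_M^{-1}(k)$ (if defined) according to the fork families $\mathbb{F}_M(t_M^{-1}(k))$, and finally all remaining non-edges are filled in with distance $M$. I will show by induction on $k$ that every automorphism $g$ of $\str{G}$ is an automorphism of $\str{G}_k$, and hence of $\overbar{\str{G}}$.

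The base case $k=1$ is the hypothesis. For the inductive step, assume $g$ is an automorphism of $\str{G}_{k-1}$; in particular, $g$ preserves the set of edges of $\str{G}_{k-1}$ together with their labels, and therefore also the set of non-edges of $\str{G}_{k-1}$. Let $c = t_M^{-1}(k)$. A pair $uv$ becomes an edge of length $c$ in $\str{G}_k$ precisely when $uv$ is a non-edge of $\str{G}_{k-1}$ and there exists a vertex $w$ with $(d_{k-1}(u,w), d_{k-1}(v,w)) \in \mathbb{F}_M(c)$. Applying $g$, we see that $g(u)g(v)$ is a non-edge of $\str{G}_{k-1}$ with witness $g(w)$ satisfying $(d_{k-1}(g(u),g(w)), d_{k-1}(g(v),g(w))) = (d_{k-1}(u,w), d_{k-1}(v,w)) \in \mathbb{F}_M(c)$, so $g(u)g(v)$ is also added as an edge of length $c$ in $\str{G}_k$. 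Conversely, if $g(u)g(v)$ is added in step $k$, then applying $g^{-1}$ (also an automorphism of $\str{G}_{k-1}$ by induction on the inverse) shows that $uv$ is added as well. Hence $g$ preserves the edges and labels of $\str{G}_k$, so $g$ is an automorphism of $\str{G}_k$.

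For the final step, all pairs that remain non-edges are assigned distance $M$. Since $g$ preserves the non-edge relation in the last intermediate structure, it sends pairs of distance $M$ to pairs of distance $M$ in $\overbar{\str{G}}$, so $g$ is an automorphism of $\overbar{\str{G}}$.

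There is essentially no obstacle here: the algorithm is defined purely in terms of isomorphism-invariant data (labels of existing edges and the witness configuration), so automorphism preservation is built into its definition. The only thing to be slightly careful about is that the criterion for an edge to be added at stage $k$ refers to the state $\str{G}_{k-1}$ and not to any choice of witness, so the argument does not require selecting witnesses coherently — any witness for $uv$ gives a witness for $g(u)g(v)$ via $g$.
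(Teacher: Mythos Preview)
Your proof is correct and follows the same approach as the paper: induction on the stage $k$ of the completion algorithm, using that each step is defined purely in terms of the edge-labels of the previous stage. The paper's own proof is in fact only a two-sentence sketch of exactly this induction, so your version is more detailed but essentially identical in content.
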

\begin{proof}
Given $\str{G}$ and an automorphism $f\colon \str{G}\to \str{G}$, it can be verified by induction that for every $k>0$, $f$ is also an automorphism graph $\str{G}_k$ as in Definition~\ref{defn:ftmcompletion}.
For every edge $x,y$ of $\str{G}_k$ which is not an edge of $\str{G}_{k-1}$, it is true that $f(x),f(y)$
is also an edge of $\str{G}_k$ which is not an edge of $\str{G}_{k-1}$, and moreover the edges $x,y$ and $f(x),f(y)$ are of the same length. This follows
directly from the definition of $\str{G}_k$.
\end{proof}

\section{Correctness of the completion algorithm}
\label{sec:magiccompletion}
In the next five lemmas we use Lemmas \ref{lem:bestcompletion} and \ref{lem:sameparity} to show that $\str{G}\in \mathcal G^\delta$ has a completion into $\Aclass$, if and only if the algorithm with a magic parameter $M$ yields such a completion. We deal with each type of forbidden triangle separately, and in doing that, we implicitly use the results of Section~\ref{sec:forbtriangles}.

\begin{lemma}[$C$-bound Lemma]\label{lem:Cbound}
Suppose $C'=C+1$, and let ${\str{G}}=(G,{d})\in \mathcal G^\delta$ be such that there is a completion of ${\str{G}}$ into $\Aclass$; let $\overbar{\str{G}}=(G,\bar d)$ be its completion with magic parameter $M$. Then there is no triangle forbidden by the $C$ bound in $\overbar{\str{G}}$.
\end{lemma}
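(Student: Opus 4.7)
My plan is to argue by contradiction: assume $\overbar{\str G}$ contains a triangle $uvw$ forbidden by the $C$-bound, and relabel so that its edge lengths satisfy $a = \bar d(u,v) \leq b = \bar d(u,w) \leq c = \bar d(v,w)$. From the analysis in Section~\ref{sec:forbtriangles}, the two long edges must satisfy $b, c > M$. The goal is then to use the hypothesized completion $\str G' = (G, d') \in \Aclass$ to force a $C$-bound-forbidden triangle in $\str G'$ itself, contradicting $\str G' \in \Aclass$. The critical simplification afforded by the hypothesis $C' = C+1$ is twofold: it rules out Case~\ref{IIb} (which requires $C' > C+1$), so the anomalous third clause of the Optimality Lemma~\ref{lem:bestcompletion} cannot occur; and it makes every triangle of perimeter at least $C$ forbidden in $\Aclass$, independent of parity (by either the $C_0$ or the $C_1$ bound).

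First I would apply clause~1 of the Optimality Lemma to the two long edges to obtain $d'(u,w) \geq b$ and $d'(v,w) \geq c$. If $\bar d(u,v) \geq M$, clause~1 also applies to $(u,v)$, yielding $d'(u,v) \geq a$, so the triangle $uvw$ in $\str G'$ has perimeter at least $a+b+c \geq C$ and is therefore forbidden, contradiction.

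If $\bar d(u,v) = a < M$, clause~2 of Optimality gives only $d'(u,v) \leq a$, and my plan is to rule this case out by splitting according to how the edge $(u,v)$ arose in the magic completion. If $(u,v)$ was already an edge of $\str G$ with $d(u,v) = a$, then $d'(u,v) = a$ in every completion, reducing to the previous case. Otherwise the algorithm inserted $(u,v)$ using some witness $w^*$ via a fork from $\mathbb F_M(a) = \mathcal F^+_a \cup \mathcal F^C_a$. In the $\mathcal F^C$ subcase, $\bar d(u,w^*) + \bar d(v,w^*) = C - 1 - a$ with both $> M$; applying clause~1 of Optimality to these two edges and comparing the constraints imposed on $\str G'$ by the triangles $uvw$ and $uvw^*$ (both of which must be allowed in $\str G'$) yields the contradiction after combining the resulting inequalities with $a + b + c \geq C$.

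The $\mathcal F^+$ subcase is the main obstacle, since here Optimality clause~2 only gives upper bounds $d'(u,w^*) \leq \bar d(u,w^*)$ and $d'(v,w^*) \leq \bar d(v,w^*)$, which do not directly force any edge in $\str G'$ to be long. My plan is to examine the four-vertex configuration $\{u,v,w,w^*\}$ in $\str G'$ and invoke the triangle inequality on the triangles $u,w,w^*$ and $v,w,w^*$ in $\str G'$; combined with the lower bounds $d'(u,w) \geq b > M$ and $d'(v,w) \geq c > M$ and the upper bounds on $d'(u,w^*)$ and $d'(v,w^*)$, this should force $d'(w,w^*)$ to be large enough that one of the triangles $uww^*$ or $vww^*$ in $\str G'$ has perimeter at least $C$, contradicting $\str G' \in \Aclass$. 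I expect that the bound $a + b + c \geq C$ together with $a < M \leq \lfloor (C - \delta - 1)/2 \rfloor$ is what makes these triangle-inequality-derived lower bounds sufficient.
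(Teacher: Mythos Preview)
Your overall setup is right, and you correctly identify that $C'=C+1$ kills Case~\ref{IIb} and makes every perimeter $\geq C$ forbidden. The first case ($a\geq M$) is essentially fine, modulo the borderline $a=M$, where Optimality alone does not pin down $d'(u,v)$; there you should invoke the $\mathbb F_M$ Completeness Lemma to say that an edge of length $M$ in a perimeter-$\geq C$ triangle must already lie in $\str G$.

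The real problem is your handling of the case $a<M$ with $(u,v)\notin\str G$. Splitting on whether the witness $w^*$ came from $\mathcal F^+_a$ or $\mathcal F^C_a$ and then trying to squeeze a contradiction out of $\str G'$ does not work as you describe. In the $\mathcal F^C$ subcase the two constraints you propose (triangles $uvw$ and $uvw^*$ both allowed in $\str G'$) only yield $d'(u,v)\leq a-1$ and $d'(u,v)\leq a$ respectively, which are perfectly compatible; no contradiction emerges. In the $\mathcal F^+$ subcase your four-vertex plan is speculative: the triangle inequalities on $uww^*$ and $vww^*$ give $d'(w,w^*)\geq\max(b-p,\,c-q)$, but since you only have \emph{upper} bounds on $d'(u,w^*),d'(v,w^*)$, you cannot lower-bound either perimeter by $C$. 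Working through the arithmetic, the best you get is that the sum of the two perimeters is at least $2C-3a+1$, which falls short of $2C$ as soon as $a\geq 1$.

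The paper sidesteps all of this with a much cleaner argument that never looks at $w^*$ or at $\str G'$. Observe that $w$ itself---the third vertex of the offending triangle---already carries the fork $(b,c)$, and since $b,c>M$ with $b+c\geq C-a$, this fork lies in $\mathcal F^C_{\bar a}$ for $\bar a=C-1-b-c<a$. By the Time Consistency Lemma the edges $(u,w)$ and $(v,w)$ (having lengths $b,c$) are present by time $t_M(\bar a)<t_M(a)$. Hence the algorithm would set $\bar d(u,v)=\bar a$ (or something with even smaller $t_M$-value) strictly before step $t_M(a)$, contradicting $\bar d(u,v)=a$. The contradiction is with the \emph{run of the algorithm}, not with the existence of $\str G'$; once you see this, no case split on the actual witness is needed.
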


\begin{proof}
Suppose for a contradiction that there is a triangle with vertices $u,v,w$ in $\overbar{\str{G}}$ such that $\bar d(u,v)+\bar d(v,w)+\bar d(u,w)\geq C$. For brevity let $a=\bar d(u,v)$, $b=\bar d(v,w)$ and $c=\bar d(u,w)$. Assume without loss of generality that $a\leq b\leq c$. Let $a',b',c'$ be the corresponding edge lengths in an arbitrary completion of ${\str{G}}$ into $\Aclass$. Then two cases can appear. 

Either $a,b,c > M$, and then by Lemma \ref{lem:bestcompletion} we have $a'\geq a$, $b'\geq b$ and $c'\geq c$, so we get the contradiction $a' + b' + c' \geq C$; or $a\leq M$, $c\geq b>M$ and $a+b+c\geq C$. In this case Lemma \ref{lem:bestcompletion} implies $b'\geq b$ and $c'\geq c$ and $a'\leq a$. If the edge $(u,v)$ was already in $\str{G}$, then clearly $a' + b' + c' \geq a+b+c\geq C$, which is a contradiction. If $(u,v)$ was not already an edge in $\str{G}$, then it was added by the completion algorithm with magic parameter $M$ in step $t_M(a)$. Let $\bar{a}=C-1-b-c$. Then clearly $\bar{a}<a$, which means that $t_M(\bar{a}) < t_M(a)$, and as $\bar{a}$ depends on $b,c$, we have $t_M(b),t_M(c)<t_M(\bar{a})$. But then the completion with magic parameter $M$ actually sets the length of the edge $u,v$ to be $\bar{a}$ in step $t_M(\bar{a})$, which is a contradiction.
\end{proof}

\begin{lemma}[Metric Lemma]\label{lem:metric}
Let $\str{G}$ and $\overbar{\str{G}}$ be as in Lemma \ref{lem:Cbound}. Then there are no non-metric triangles in $\str{G}$.
\end{lemma}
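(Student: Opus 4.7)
I plan to prove the statement (reading ``in $\str{G}$'' as ``in $\overbar{\str{G}}$'', matching the pattern of the surrounding lemmas) by contradiction, following the template of Lemma~\ref{lem:Cbound}. Assume $\overbar{\str{G}}$ contains a non-metric triangle on vertices $u, v, w$ with distances $a = \bar d(u,v) \leq b = \bar d(v,w) \leq c = \bar d(u,w)$ satisfying $a + b < c$, and let $\str{G}' = (G, d')$ be any completion of $\str{G}$ in $\Aclass$, with corresponding distances $a', b', c'$ on the same triangle. The ``non-metric'' entry of Section~\ref{sec:forbtriangles} already gives $a < M$; I then split into cases based on how $b$ and $c$ compare to $M$.

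The easy subcase is $c \geq M$ together with $b \leq M$. Lemma~\ref{lem:bestcompletion} yields $a' \leq a$, $b' \leq b$ and $c' \geq c$ (any borderline equality $b=M$ or $c=M$ forces the corresponding edge into $\str{G}$ by Lemma~\ref{lem:misgood}, since a non-metric triangle is forbidden in $\Aclass$), so the chain $a' + b' \leq a + b < c \leq c'$ directly contradicts metricity of $\str{G}'$.

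The two remaining subcases are handled by the same strategy: track the time-schedule $t_M$ to show that one of the triangle's edges must already lie in $\str{G}$. When $c < M$ (hence $a, b < M$ as well), the fork $(a,b)$ at $v$ belongs to $\mathcal F^+_{a+b} \subseteq \mathbb F_M(a+b)$, and $a+b < c$ gives the chain $t_M(a), t_M(b) < t_M(a+b) < t_M(c)$; thus at time $t_M(a+b)$ both $uv$ and $vw$ are present while the algorithm has not yet added $uw$, so unless $uw$ already lies in $\str{G}$ the algorithm would force $\bar d(u,w) = a+b \neq c$. When $c > M$ and $b > M$, the fork $(a,c)$ at $u$ lies in $\mathcal F^-_{c-a} \subseteq \mathbb F_M(c-a)$ (since $c - a > b > M$), and the inequality $a+b < c$ rewrites as $t_M(c-a) < t_M(b)$, while $t_M(a), t_M(c) < t_M(c-a)$; by the same argument $vw$ must lie in $\str{G}$. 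In either case, after fixing $c' = c$ or $b' = b$, Lemma~\ref{lem:bestcompletion} supplies the bounds on the remaining two edges and the metric inequality in $\str{G}'$ is violated exactly as in the easy subcase.

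The main obstacle I anticipate is the exceptional clause~3 of Lemma~\ref{lem:bestcompletion}, which in Case~\ref{IIb} allows an edge of $\bar d$-value $M-1$ to be completed in $\str{G}'$ by a strictly larger distance of the same parity, thereby breaking the bounds $a' \leq a$ or $b' \leq b$. I would handle this by first using the tight Case~\ref{IIb} arithmetic $3M = 2\delta - 1$ and the constraint $a+b < c \leq \delta$ to show that almost no non-metric triangle in $\overbar{\str{G}}$ can have an edge of length $M-1$ at all, and for the residual configurations by observing that the forced alternative values in $\str{G}'$ produce a triangle that is forbidden not by the metric axiom but by the $K_2$- or $C_1$-bound, again contradicting $\str{G}' \in \Aclass$.
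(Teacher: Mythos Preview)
Your proposal is correct and follows essentially the same approach as the paper's proof. The three-way case split (both short edges below $M$ with the long edge above; all three below $M$; two edges above $M$) matches the paper's cases, merely reordered, and in each case the paper uses exactly your mechanism: either apply Optimality directly, or use the $t_M$-schedule to force one edge of the triangle into $\str{G}$ and then apply Optimality to the other two. Your identification of the Case~\ref{IIb} exception as the only real obstacle, and your plan to resolve it via the tight arithmetic $3M=2\delta-1$ together with a $K_2$-bound violation in $\str{G}'$, is precisely what the paper does (invoking the Parity Lemma to control parities and then exhibiting a $K_2$-forbidden triangle); be aware that the paper's execution of this step is fairly long, including a separate hand-check of the $\delta=5$ configuration $(a,b,c)=(2,2,5)$, so your sketch hides nontrivial work.
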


\begin{proof}
Suppose for a contradiction that there is a triangle with vertices $u,v,w$ in $\str{G}$ such that $d(u,v)+d(v,w)<d(u,w)$. Denote $a=d(u,v)$, $b=d(v,w)$ and $c=d(u,w)$ and assume without loss of generality that $a\leq b < c$. Let $a',b',c'$ be the corresponding edge lengths in an arbitrary completion of $\bar{\str{G}}$ into $\Aclass$. We shall distinguish three cases based on Section~\ref{sec:forbtriangles}:

\begin{enumerate}

\item First suppose $a,b,c < M$. Then $t_M(a)\leq t_M(b) < t_M(a+b) < t_M(c)$, which means that $c$ must be already in $\str{G}$. Note that in Case~\ref{IIb} if $b=K_1-1=M-1$, then $c\geq M$, hence we can use Lemma~\ref{lem:bestcompletion} for $a$ and $b$, which gives us that $a' + b' \leq a + b < c = c'$, which is a contradiction.

\item Another possibility is $a<M$ and $b,c\geq M$ (actually $c>M$, since $abc$ is non-metric).

Suppose $a'\leq a$ and $c'\geq c$ (the first possibility of Lemma~\ref{lem:bestcompletion}). If $b$ was already in $\str{G}$, then $\str{G}$ has no completion -- a contradiction. Otherwise clearly $c-a > b \geq M$, so $t_M(c-a) < t_M(b)$. But as $c-a$ depends on $c$ and $a$, we get $t_M(c-a) > t_M(c),t_M(a)$, which means that the completion algorithm with magic parameter $M$ would complete the edge $v,w$ with the length $c-a$ and not with $b$.

If the previous paragraph does not apply we have Case~\ref{IIb} and $a=K_1-1=K_2-1$. But then as $M=K_2$, we have $b\geq K_2$, which means $a+b\geq 2K_2-1 = \frac{4\delta-2}{3}-1\geq \delta$ for $\delta\geq 5$, which holds in \ref{IIb}, but that means that $abc$ is actually metric, a contradiction.

\item The last possibility is $a,b<M$ and $c\geq M$. Then either (by Lemma~\ref{lem:bestcompletion} and Lemma~\ref{lem:misgood} if $c=M$) we have $a'\leq a$, $b'\leq b$ and $c'\geq c$, hence the triangle $a',b',c'$ is again non-metric, or we have Case~\ref{IIb}, $b=K_1-1$, $a\leq K_1-1$.   The rest of proof of this lemma consists of a verification of this special case.
\end{enumerate}

From admissibility of \ref{IIb} we have $M=K_1=K_2=\frac{2\delta-1}{3}$ and $\delta\geq 5$. Note that $c-a \geq b+1 = K_1 = M$ from non-metricity of $abc$, hence $c>M$.

If both $a$ and $b$ were already in $\bar{\str{G}}$, then $abc$ is non-metric in any completion by Lemma~\ref{lem:bestcompletion}. The same thing is true if $b$ was already in $\bar{\str{G}}$ and $a'\leq a$ in any completion (i.e. either $a<K_1-1$ or $a$ was not introduced by $\mathcal F^C$ due to a $(\delta,\delta)$ fork).

Note that for $\delta\geq 8$ it cannot happen that $a=b=K_1-1$, as then $a+b=2K_1-2=2\frac{2\delta-1}{3} - 2 \geq \delta$, hence $a+b<c$ is absurd. So the only case when $a=b=K_1-1$ is $\delta=5$ (because from \ref{IIb} it follows that $\delta=3m+2$ for some $m\geq 1$). In that case we have triangle $5,2,2$ and each of the twos either was in $\bar{\str{G}}$ or is supported by a fork $(1,1)$ or by a fork $(5,5)$. And it can be shown that none of these structures has a strong completion into $\Aclass$.

Hence $b$ was not in the input graph and $a<K_1-1$.

Observe that $c-a = M$. From non-metricity of $abc$ we have $c-a \geq b+1 = K_1=M$. And if $c-a\geq M+1$, then $t_M(c-a)\leq t_M(M+1) = 2\delta-2M-2$. And this is strictly less than $t_M(M-1) = 2M-3$ since $M=\frac{2\delta-1}{3}$ and $\delta\geq 5$. Further as $M=\frac{2\delta-1}{3}$ is odd, we see that $a$ and $c$ have different parities.

From Lemma~\ref{lem:bestcompletion} we have that in any completion $c'\geq c$ and $a'\leq a$. So the only way that the triangle $u,v,w$ can be metric is to have $b'> b$. Note that $c'-a'\geq c-a = M = K_2$, hence $c'\geq a'+K_2$. And from Lemma~\ref{lem:sameparity} we have that the parities of $a,b,c$ are preserved.

Note that as $M$ is odd, $b'$ is even. And since the parities of $c'$ and $a'$ are different, we have that $a'+b'+c'$ is odd. Also note that $c'+b'\geq a'+K_2+K_2+1 \geq 2K_2+a'$. Hence $u,v,w$ is forbidden by the $K_2$ bound in $\str{G}'$, which is a contradiction.
\end{proof}

\begin{lemma}[$K_1$-bound Lemma]
\label{lem:K1bound}
Let $\str{G}$, $\overbar{\str{G}}$ be as in Lemma \ref{lem:Cbound}. Then there are no triangles forbidden by the $K_1$-bound in $\overbar{\str{G}}$.
\end{lemma}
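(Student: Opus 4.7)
My plan is to argue by contradiction in a few quick steps, leveraging the Optimality Lemma and the Parity Lemma established earlier.

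Assume for contradiction that there is a triangle $uvw$ in $\overbar{\str{G}}$ with $\bar d(u,v)=a$, $\bar d(v,w)=b$, $\bar d(u,w)=c$ (with $a\leq b\leq c$) that is forbidden by the $K_1$-bound, i.e. $a+b+c$ is odd and $a+b+c<2K_1+1$. By the Metric Lemma (Lemma~\ref{lem:metric}), which has already been established under our hypothesis that $\str{G}$ has a completion into $\Aclass$, the triangle $abc$ is metric. The structural analysis of forbidden triangles in Section~\ref{sec:forbtriangles} then tells us that $a,b,c<K_1\leq M$.

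Now let $\str{G}'=(G,d')$ be an arbitrary completion of $\str{G}$ into $\Aclass$, and write $a',b',c'$ for the corresponding distances in $\str{G}'$. Since each of $a,b,c$ is strictly below $M$, the Optimality Lemma (Lemma~\ref{lem:bestcompletion}) yields $a'\leq a$, $b'\leq b$, $c'\leq c$; in particular $a'+b'+c'\leq a+b+c<2K_1+1$. Moreover, each of $a,b,c$ satisfies $\bar d\leq K_1-1\leq\min(K_1,M-1)$, so the Parity Lemma (Lemma~\ref{lem:sameparity}) applies to every edge of the triangle. The exceptional case (point 2 of that lemma) would require $\bar d=K_1$, which is impossible since our distances are strictly less than $K_1$. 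Hence the parity of each of $a',b',c'$ matches that of $a,b,c$, and so $a'+b'+c'\equiv a+b+c\pmod 2$ is odd.

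Combining the two conclusions, the triangle $u,v,w$ in $\str{G}'$ has odd perimeter strictly less than $2K_1+1$, so it is itself forbidden by the $K_1$-bound. This contradicts $\str{G}'\in\Aclass$, completing the proof. The argument is essentially a clean combination of the Optimality and Parity Lemmas; the only subtle point is verifying that the exceptional case of the Parity Lemma (which ties us to $\bar d=K_1$) cannot interfere, and this is automatic from the strict inequality $\bar d<K_1$ given by Section~\ref{sec:forbtriangles}.
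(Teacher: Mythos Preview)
Your argument covers the generic situation correctly, but there is a genuine gap: you misapply the Optimality Lemma. Lemma~\ref{lem:bestcompletion} does \emph{not} say that $\bar d(u,v)<M$ implies $d'(u,v)\leq \bar d(u,v)$. It says that one of three alternatives holds, and the third alternative (Case~\ref{IIb} with $\bar d(u,v)=M-1$, $d'(u,v)>M$) is compatible with $\bar d(u,v)<M$. In Case~\ref{IIb} we have $M=K_1$, so $M-1=K_1-1$, and since a triangle forbidden by the $K_1$-bound has all edges $\leq K_1-1$, it is entirely possible that some edge of your triangle has $\bar d=M-1$ and falls into alternative~3, whence $d'>M$ rather than $d'\leq \bar d$. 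Your sentence ``the Optimality Lemma yields $a'\leq a$, $b'\leq b$, $c'\leq c$'' is therefore unjustified, and your perimeter bound $a'+b'+c'<2K_1+1$ does not follow. You correctly dismiss the exceptional case of the \emph{Parity} Lemma (which needs $\bar d=K_1$), but you overlook the exceptional case of the \emph{Optimality} Lemma (which needs $\bar d=K_1-1$).

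The paper's proof has exactly your argument as its first paragraph, and then a second paragraph dealing with this leftover case. There one uses that if (say) $c=K_1-1$ then $a+b=c+1=K_1$ is forced by metricity, odd perimeter, and the perimeter bound; hence $a'+b'\leq a+b=c+1$ bounds $c'$ from above via metricity in $\str{G}'$, and the Parity Lemma then excludes $c'=c+1$, recovering $c'\leq c$. You need to supply an argument of this kind to close the gap.
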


\begin{proof}
Suppose for a contradiction that there is a metric (from Lemma~\ref{lem:metric} we already know that all triangles in $\str{G}$ are metric) triangle with vertices $u,v,w$ in $\str{G}$ such that $\bar d(u,v)+\bar d(v,w)+\bar d(u,w)$ is odd and less than $2K_1+1$. Denote $a=\bar d(u,v)$, $b=\bar d(v,w)$ and $c=\bar d(u,w)$. From Section~\ref{sec:forbtriangles} we get $a,b,c < K_1\leq M$.

First suppose that Lemma~\ref{lem:bestcompletion} gives us that for any completion $a',b',c'$ that $a'\leq a$, $b'\leq b$ and $c'\leq c$. Also $a$ has the same parity as $a'$, $b$ as $b'$ and $c$ as $c'$ by Lemma~\ref{lem:sameparity}, hence $a'+b'+c'\leq a+b+c$ and those two expressions have the same parity, hence $a',b',c'$ is also forbidden by the $K_1$ bound, a contradiction.

Otherwise we have Case~\ref{IIb} and $c=K_1-1$. But then from metricity of $abc$ either $a+b=c$ (but then $a+b+c$ is even, a contradiction), or $a+b=c+1$ (if $a+b\geq c+2$, then the perimeter of the triangle is too large to be forbidden by the $K_1$ bound). But again in any completion $a'\leq a$ and $b'\leq b$, so either $c'\leq c$ or $c'=c+1$ (from metricity). From Lemma~\ref{lem:sameparity} we know that the parity of $c$ is preserved, hence $c'=c+1$ is absurd, so $a'\leq a$, $b'\leq b$ and $c'\leq c$, and we can apply the same argument as in the preceding paragraph.
\end{proof}

\begin{lemma}[$C_0,C_1$-bound Lemma]
\label{lem:C01bound}
Let $C'>C+1$ and let $\str{G}$, $\overbar{\str{G}}$ be as in Lemma \ref{lem:Cbound}. Then there are no triangles forbidden by either of the $C_0$ and $C_1$ bounds in $\overbar{\str{G}}$.
\end{lemma}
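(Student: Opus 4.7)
The plan is to argue by contradiction in parallel with Lemma \ref{lem:Cbound}. Suppose $\overbar{\str{G}}$ contains a triangle $uvw$ with edges $a \leq b \leq c$ that is forbidden by either the $C_0$- or the $C_1$-bound, so in particular $a+b+c \geq C$. By the analysis in Section \ref{sec:forbtriangles} we immediately have $b, c > M$. The hypothesis $C' > C+1$ together with Observation \ref{obs:FCforks} tells us that we must be either in Case \ref{III} with admissibility giving $C \geq 2\delta + K_2$, or in Case \ref{IIb} (where again $C = 2\delta + K_2$). In either situation, combining $a+b+c \geq C \geq 2\delta + K_2$ with $b, c \leq \delta$ forces $a \geq K_2 \geq M$, so every edge of the triangle has $\bar d$-value at least $M$.

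The heart of the argument is a joint application of the Optimality Lemma \ref{lem:bestcompletion} and the Parity Lemma \ref{lem:sameparity}, with the goal of showing that an arbitrary completion $\str{G}' \in \Aclass$ of $\str{G}$ would contain a triangle on $u,v,w$ whose perimeter has the same parity as, and is at least as large as, the perimeter in $\overbar{\str{G}}$. Since all three edges satisfy $\bar d \geq M$, case 1 of Lemma \ref{lem:bestcompletion} yields $a' \geq a$, $b' \geq b$, $c' \geq c$; case 3 is precluded because it requires $\bar d = M-1$. For edges with $\bar d = M$, Lemma \ref{lem:misgood} guarantees that they are already present in $\str{G}$, hence fixed in $\str{G}'$. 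Moreover, in Case \ref{III} the extra condition on $M$ from Definition \ref{defn:magiccompletion} (namely $M < K_2$ when $C = 2\delta+K_2$) rules out $a = M$, confining that delicate situation to Case \ref{IIb}. For parities, every edge is either equal to $M$ (parity trivial) or at least $\max(K_2, M+1)$, so the main case of Lemma \ref{lem:sameparity} applies; the exceptional case 2 there requires $\bar d = K_1 < K_2$, which is incompatible with $\bar d \geq K_2$.

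Putting these ingredients together, the triangle $u,v,w$ in $\str{G}'$ has perimeter of the same parity as and at least as large as in $\overbar{\str{G}}$, so it remains forbidden by the same $C_0$- or $C_1$-bound in $\str{G}'$, contradicting $\str{G}' \in \Aclass$. The main obstacle is the careful management of boundary cases in the interplay between the extra conditions of Definition \ref{defn:magiccompletion} and the sub-cases of admissibility. In particular, one must observe that although in Case \ref{IIb} the algorithm inserts short edges of length $M-1$ via $(\delta,\delta)$-forks (Observation \ref{obs:FCforks}), such edges are too small to occur in any $C_0$- or $C_1$-forbidden triangle, whose smallest edge is necessarily at least $M$; this is exactly what prevents the extremal case 3 of Lemma \ref{lem:bestcompletion} from interfering with the argument.
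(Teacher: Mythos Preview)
Your argument is correct and follows essentially the same route as the paper's proof: derive $a\geq K_2\geq M$ from $C\geq 2\delta+K_2$, then use Lemma~\ref{lem:misgood} for the case $\bar d=M$ and Lemmas~\ref{lem:bestcompletion} and~\ref{lem:sameparity} for the case $\bar d>M$ to transport the $C$-bound violation to an arbitrary completion $\str{G}'$. Your treatment is in fact slightly more explicit than the paper's in spelling out why the exceptional clauses of Lemmas~\ref{lem:bestcompletion} and~\ref{lem:sameparity} cannot occur here (namely that $\bar d\geq K_2$ rules out both $\bar d=M-1$ and $\bar d=K_1$ in the relevant exception cases), whereas the paper simply splits on whether the short edge $a$ was already present in $\str{G}$.
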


\begin{proof}
Suppose for a contradiction that there is a triangle with vertices $u,v,w$ in $\overbar{\str{G}}$, such that $a+b+c\geq C$ and has parity such that it is forbidden by one of the $C$ bounds, where $a=\bar d(u,v)$, $b=\bar d(v,w)$ and $c=\bar d(u,w)$.

In Case~\ref{IIb}, we have $K_1=K_2$, $C=2K_1+2K_2+1 = 4K_2+1$ and $3K_2=2\delta-1$, hence $C=2\delta + K_2$. For parameters from Case~\ref{III} we have $C\geq 2\delta+K_2$, which means that we always have $C\geq 2\delta+K_2$. This implies that $b,c>K_2\geq M$ and $a\geq K_2$. If $a$ was already present in $\str{G}$, then by Lemmas \ref{lem:misgood}, \ref{lem:bestcompletion} and \ref{lem:sameparity} we have that any completion $a',b',c'$ has $a'=a$, $b'\geq b$ and $c'\geq c$ and the parities are preserved, hence $a',b',c'$ is forbidden by the $C$ bound as well, a contradiction to $\str{G}$ having a completion. If $a$ is not in $\str{G}$, we have $a\neq M$ (by Lemma~\ref{lem:misgood}) and actually $a>M$ as $a\geq K_2\geq M$. Thus we can again use Lemmas \ref{lem:bestcompletion} and \ref{lem:sameparity} to get a contradiction.
\end{proof}

\begin{lemma}[$K_2$-bound Lemma]
\label{lem:K2bound}
Let $\str{G}$, $\overbar{\str{G}}$ be as in Lemma \ref{lem:Cbound}.  Then there are no triangles forbidden by the $K_2$-bound in $\overbar{\str{G}}$.
\end{lemma}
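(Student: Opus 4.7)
Suppose for contradiction that there is a triangle $u,v,w$ in $\overbar{\str G}$ forbidden by the $K_2$-bound, and write $a=\bar d(u,v)$, $b=\bar d(v,w)$, $c=\bar d(u,w)$ with $a\leq b\leq c$. By the analysis in Section~\ref{sec:forbtriangles} we then have $b+c\geq 2K_2+a+1$, the perimeter is odd, $b,c>K_2\geq M$, and $a\leq K_1\leq M$; moreover $a<K_1$ except possibly in Case~\ref{III} with $K_1+2K_2=2\delta-1$. Let $\str G'=(G,d')\in\Aclass$ be an arbitrary completion of $\str G$ and denote $a'=d'(u,v)$, $b'=d'(v,w)$, $c'=d'(u,w)$.

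In the generic case I would argue as follows. Because $b,c>M$ and $b,c\geq K_2\geq\max(K_2,M+1)$, Lemma~\ref{lem:bestcompletion} gives $b'\geq b$ and $c'\geq c$, and Lemma~\ref{lem:sameparity} gives that $b',c'$ have the same parities as $b,c$ (the exceptional case there requires $\bar d\leq K_1$, which does not apply here). Similarly, for the short edge, if $a\leq\min(K_1,M-1)$ then Lemma~\ref{lem:bestcompletion} yields $a'\leq a$ and Lemma~\ref{lem:sameparity} yields that $a'$ has the same parity as $a$. Combining, $b'+c'\geq b+c\geq 2K_2+a+1\geq 2K_2+a'+1$, and the perimeter $a'+b'+c'$ has the same parity as $a+b+c$, hence is odd. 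So the triangle $a'b'c'$ is forbidden by the $K_2$-bound in $\str G'$, contradicting $\str G'\in\Aclass$.

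Next I would clear the exceptional cases one by one. First, if the edge of length $a$ was already present in $\str G$, then $a'=a$ and the same computation works. Second, in Case~\ref{III} with $K_1+2K_2=2\delta-1$ and $a=K_1$, by Definition~\ref{defn:magiccompletion} we have $M>K_1=a$, so $a\leq M-1$; here the parity-lemma exception does \emph{not} apply because that exception requires $C=2\delta+K_1+1$ and $C\neq 2K_1+2K_2+1$, but $K_1+2K_2=2\delta-1$ forces $2K_1+2K_2+1=2\delta+K_1$, which combined with admissibility ($C\geq 2\delta+K_1+2$) shows the exception sits outside this regime — so parity is preserved and the generic argument still closes. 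The analogous parity-lemma exception with $\bar d(u,v)=K_1$ outside this sub-case is also excluded because there $C\neq 2K_1+2K_2+1$ forces $a+b+c\geq C$, which was already ruled out by Lemmas~\ref{lem:Cbound} and \ref{lem:C01bound}.

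The main obstacle, and what I expect to be the delicate part, is the Case~\ref{IIb} exception of Lemma~\ref{lem:bestcompletion}: here $a$ could equal $M-1$ while $a'>M$ of the same parity as $a$. In Case~\ref{IIb} we have $K_1=K_2=M$ and $C=2\delta+K_2$, so $b,c>K_2=M$ forces $b,c\geq M+1$; combined with $a'\geq M+1$ the perimeter satisfies $a'+b'+c'\geq 3M+3=2\delta+2$. Using $C=2\delta+K_2=2\delta+M$ and the parity preservation for $b,c$ together with the assumption $a'\equiv a\pmod 2$, one checks that $a'+b'+c'$ retains the odd parity of $a+b+c$, and I would verify by a short perimeter/parity count (using $3M=2\delta-1$) that $a'+b'+c'\geq C$ with matching parity, so the triangle $a'b'c'$ is forbidden in $\str G'$ either by the $K_2$-bound (if $a'\leq K_2$, which is excluded by $a'>M=K_2$, so actually) or — and this is the substantive observation — by the $C_1$-bound, again contradicting $\str G'\in\Aclass$. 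Finally, Lemma~\ref{lem:aut} then guarantees the completion preserves automorphisms, which together with Lemmas~\ref{lem:Cbound}--\ref{lem:K2bound} will yield Theorem~\ref{thm:magiccompletion}.
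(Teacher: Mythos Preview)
Your overall strategy matches the paper's: reduce the generic case via the Optimality and Parity Lemmas, then treat the two exceptional regimes (Case~\ref{III} with $K_1+2K_2=2\delta-1$, and Case~\ref{IIb}). Two points need tightening.

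\textbf{The Case~\ref{IIb} bound is too weak.} From $b,c\geq M+1$ and $a'\geq M+1$ you only get $a'+b'+c'\geq 3M+3=2\delta+2$, whereas $C=2\delta+M$; for $\delta\geq 5$ one has $M\geq 3$, so the gap $M-2\geq 1$ is real and no ``parity count'' closes it. The missing observation (which the paper uses) is that the $K_2$-bound hypothesis itself gives the stronger inequality $b+c\geq 2K_2+a+1=2M+(M-1)+1=3M=2\delta-1$. Since $b'\geq b$ and $c'\geq c$, this yields $b'+c'\geq 2\delta-1$, and then $a'+b'+c'\geq (M+1)+(2\delta-1)=2\delta+M=C$ with odd perimeter, forbidden by the $C_1$-bound. (The paper also notes that in fact $b+c=2\delta-1$ exactly, by parity, but the inequality suffices.)

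\textbf{The parity-exception bookkeeping is slightly off.} Your sentence ``$C\neq 2K_1+2K_2+1$ forces $a+b+c\geq C$'' invokes the Case~\ref{II} analysis of Section~\ref{sec:forbtriangles}, but the parity-lemma exception lives in Case~\ref{III}. The correct dichotomy is: if $K_1+2K_2=2\delta-1$ then admissibility gives $C\geq 2\delta+K_1+2$, which rules out the exception's requirement $C=2\delta+K_1+1$; and if $K_1+2K_2>2\delta-1$ then Section~\ref{sec:forbtriangles} gives $a<K_1$ strictly, so $\bar d(u,v)=K_1$ never occurs. Either way Lemma~\ref{lem:sameparity} applies cleanly to $a$.

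With these two fixes your argument coincides with the paper's. (The final sentence about Lemma~\ref{lem:aut} and Theorem~\ref{thm:magiccompletion} belongs to the proof of that theorem, not of this lemma.)
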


\begin{proof}
We know that all triangles are metric and not forbidden by the $C$ bounds. Suppose for a contradiction that there is a triangle with vertices $u,v,w$ in $\overbar{\str{G}}$ such that $b+c\geq 2K_2+a+1$, where $a=\bar d(u,v)$, $b=\bar d(v,w)$ and $c=\bar d(u,w)$. We know that $b,c > K_2$ and $a\leq K_1$ by Section~\ref{sec:forbtriangles}, where equality can occur only in Case~\ref{III} when $2K_2+K_1=2\delta-1$ and furthermore $M > a$ (because of Definition~\ref{defn:magiccompletion}).

Note that from the conditions for Case~\ref{III}, we know that if $2K_2+K_1=2\delta-1$, then $C\geq 2\delta+K_1+2$, which means that for edges $a,b,c$ Lemma~\ref{lem:sameparity} guarantees that the parity is preserved.

Unless $a=K_1-1$ and Case~\ref{IIb}, Lemmas~\ref{lem:bestcompletion} and~\ref{lem:sameparity} yield that $a'+b'+c'$ has the same parity as $a+b+c$ for any completion $a',b',c'$ and $b'\geq b$, $c'\geq c$ and $a'\leq a$, hence triangle $u,v,w$ is forbidden by the $K_2$ bound in any completion of ${\str{G}}$, which is a contradiction.

The last case remaining is $a=K_1-1=K_2-1$, Case~\ref{IIb}. But then $b+c\geq 2K_2+a+1 = 3K_2=2\delta-1$, so either $b+c=2\delta-1$, or $b+c=2\delta$. But from being forbidden by the $K_2$-bound we know that $a+b+c$ is odd, hence $b+c$ has different parity than $a$. And we know that $a=K_2-1 = \frac{2\delta-1}{3}-1$, which is even, hence $b+c=2\delta-1$. We also know that parities are preserved, so if $a'\geq K_2+1$, then $a'+b'+c'\geq 2\delta + K_2$ and it is thus forbidden by one of the $C$ bounds.
\end{proof}
\begin{proof}[Proof of Theorem~\ref{thm:magiccompletion}]
From the Lemmas~\ref{lem:Cbound}, \ref{lem:metric}, \ref{lem:K1bound}, \ref{lem:C01bound}, \ref{lem:K2bound} we conclude that the algorithm will correctly complete every graph $\str{G}$
which has completion into $\Aclass$. The optimality statement follows by Lemma~\ref{lem:bestcompletion}. Automorphisms are preserved according to Lemma~\ref{lem:aut}.
\end{proof}

\chapter{Ramsey expansion and EPPA}\label{ch:ramseyeppa}

The goal of this chapter is to prove the main results of this thesis:
\begin{theorem}
\label{thm:regularramsey}
For every choice of admissible primitive parameters $\delta$, $K_1$, $K_2$, $C_0$ and $C_1$, the class $\overrightarrow{\mathcal A}^\delta_{K_1,K_2,C_0,C_1}$ of all possible linear orderings of structures from $\mathcal A^\delta_{K_1,K_2,C_0,C_1}$ is Ramsey and
has the expansion property.
\end{theorem}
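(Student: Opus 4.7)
The plan is to prove the Ramsey property by invoking Theorem~\ref{thm:hn} with $\mathcal R = \overrightarrow{\mathcal G^\delta}$ (linearly ordered $\delta$-edge-labelled graphs) and $\mathcal K = \overrightarrow{\mathcal A}^\delta_{K_1,K_2,C_0,C_1}$. Exactly as in Corollary~\ref{cor:metricareramsey}, the class $\overrightarrow{\mathcal G^\delta}$ is Ramsey by the Ne\v set\v ril-R\"odl theorem (Theorem~\ref{thm:nr}): any pair of vertices carrying two different distance relations is irreducible and can be listed in the forbidden family, and every other pair fits into the binary relational framework of Theorem~\ref{thm:nr}. Strong amalgamation and heredity of $\mathcal K$ follow from Theorem~\ref{thm:admissible} together with the observation that linear orders amalgamate freely and independently of the metric structure. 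What remains is to check local finiteness, and this is where the magic completion of Chapter~\ref{ch:magiccompletion} does the real work.

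To verify local finiteness I would first exhibit a finite family $\mathcal F \subseteq \mathcal G^\delta$ such that $\str G \in \mathcal G^\delta$ admits a completion into $\Aclass$ if and only if $\str G \in \Forb(\mathcal F)$. The construction follows the outline of the Introduction: apply the magic completion algorithm to $\str G$, and whenever it outputs a forbidden triangle, backtrack through the $\delta$ stages of the algorithm to identify the minimal sub-structure of $\str G$ responsible for that triangle. Since each newly added edge was supported by a fork of two earlier edges, the number of relevant edges at most doubles per stage, so every minimal obstruction has size bounded purely in terms of $\delta$. Theorem~\ref{thm:magiccompletion} then certifies that this finite $\mathcal F$ characterises exactly the $\delta$-edge-labelled graphs that complete into $\Aclass$.

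With $\mathcal F$ in hand, fix $\str C_0\in \overrightarrow{\mathcal G^\delta}$ and set $n(\str C_0)$ to be one more than the maximum number of vertices of any element of $\mathcal F$. Given $\str C$ in the ordered language with a homomorphism-embedding to $\str C_0$, the order relation inherited through this map is acyclic and every pair of vertices carries at most one distance relation; extend the order arbitrarily to a linear order on $\str C$. If every substructure of size at most $n(\str C_0)$ has a completion in $\overrightarrow{\mathcal A}^\delta_{K_1,K_2,C_0,C_1}$, then in particular the $\delta$-edge-labelled reduct of $\str C$ avoids $\mathcal F$, so magic completion applied to it produces an element of $\Aclass$; re-attaching the linear order gives the required completion in $\mathcal K$. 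Theorem~\ref{thm:hn} then delivers the Ramsey property for $\overrightarrow{\mathcal A}^\delta_{K_1,K_2,C_0,C_1}$.

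For the expansion property I would use the standard Ne\v set\v ril-R\"odl trick of deriving the ordering property from the Ramsey property together with strong amalgamation. Given $\str B \in \Aclass$, first assemble (using strong amalgamation from Theorem~\ref{thm:admissible}) a structure $\str B^\ast\in \Aclass$ into which every linearly ordered expansion of $\str B$ embeds, and then apply the Ramsey property just proven to find $\str C^+ \in \overrightarrow{\mathcal A}^\delta_{K_1,K_2,C_0,C_1}$ with $\str C^+ \longrightarrow (\str B^{\ast+})^{\str B^+}_{|B|!}$, colouring each copy of $\str B$ in $\str C^+$ by its induced ordering type. A monochromatic copy of $\str B^{\ast+}$ then witnesses that every ordering of the reduct $\str C$ contains every ordering of $\str B$, which is precisely the expansion property. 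The main obstacle throughout is the local-finiteness step, and this is exactly what the magic completion algorithm was engineered to overcome; the expansion property itself, once Ramsey is secured, is essentially bookkeeping on top of strong amalgamation.
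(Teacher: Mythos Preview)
Your argument for the Ramsey property is correct and essentially identical to the paper's: invoke Theorem~\ref{thm:hn} with $\mathcal R=\overrightarrow{\mathcal G^\delta}$, establish local finiteness by backtracking the magic completion through its $\delta$ stages to produce a finite obstacle set (this is the paper's Lemma~\ref{lem:obstacles}), and transfer exactly as in Corollary~\ref{cor:metricareramsey}.

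Your expansion-property argument, however, has a genuine gap. You propose to colour $\leq$-copies of a fixed ordered $\str B^+$ inside $\str C^+$ by the order type that an arbitrary second ordering $\leq'$ induces on them, using $|B|!$ colours. A monochromatic $\str B^{\ast+}$ then tells you only that every $\leq$-copy of $\str B^+$ in it carries the \emph{same} $\leq'$-type $\tau$; it says nothing about metric copies of $\str B$ whose $\leq$-order type is \emph{different} from that of $\str B^+$. Those are exactly the copies you need in order to conclude that $(\str B^{\ast},\leq')$ contains every ordering of $\str B$, so the conclusion does not follow.

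The paper closes this gap by a different (and classical) trick: it colours \emph{edges}, not copies of $\str B$, and with only two colours. Starting from the ordered join $\overrightarrow{\str B}_0$ of all orderings of $\str B$, it first inserts, for every pair $u<v$ with $d(u,v)\neq M$, a new vertex $w$ at distance $M$ from both with $u<w<v$ (this is legal by Observation~\ref{obs:magicismagic}), completes to $\overrightarrow{\str B}_1$, and then applies Ramsey to the single ordered $M$-edge $\overrightarrow{\str E}$. Given a second ordering $\leq'$ of the witness $\str B$, one $2$-colours $M$-edges by whether $\leq$ and $\leq'$ agree. On a monochromatic copy of $\overrightarrow{\str B}_1$, the inserted $w$'s force $\leq'$ to agree with $\leq$ (or with its reverse) on \emph{all} pairs, not just on $M$-edges; either way the copy of $\overrightarrow{\str B}_0$ inside delivers every ordering of $\str B$. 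The key missing ingredient in your sketch is precisely this density step that propagates control from the coloured objects to the full order.
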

\begin{theorem}
\label{thm:regulareppa}
For every choice of admissible primitive parameters $\delta$, $K_1$, $K_2$, $C_0$ and $C_1$, the class $\mathcal A^\delta_{K_1,K_2,C_0,C_1}$  has EPPA.
\end{theorem}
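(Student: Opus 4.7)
The plan is to derive Theorem~\ref{thm:regulareppa} from Corollary~\ref{cor:herwiglascar} using the magic completion algorithm developed in Chapter~\ref{ch:magiccompletion}. By Cherlin's Admissibility Theorem~\ref{thm:admissible}, the class $\mathcal A^\delta_{K_1,K_2,C_0,C_1}$ is a strong amalgamation class of finite $\delta$-valued metric spaces; viewed as relational structures with one binary symbol per distance $1,\ldots,\delta$, its members are irreducible (every pair of vertices carries some $\rel{}{i}$), and the class is countable, so it is a \Fraisse{} class of irreducible $L$-structures. Thus two ingredients remain: (a) exhibit a \emph{finite} family $\mathcal F$ of $\delta$-edge-labelled graphs such that $\mathcal A^\delta_{K_1,K_2,C_0,C_1}\subseteq \Forb(\mathcal F)$ and every member of $\Forb(\mathcal F)$ has a completion into $\mathcal A^\delta_{K_1,K_2,C_0,C_1}$, and (b) verify that this completion can be chosen automorphism-preserving.

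For (b), Theorem~\ref{thm:magiccompletion} (specifically its last sentence, established through Lemma~\ref{lem:aut}) already guarantees that the completion with magic parameter $M$ preserves all automorphisms of the input graph. For (a), I would fix a magic parameter $M$ and define $\mathcal F$ to be the class of all finite $\delta$-edge-labelled graphs $\str F$ on which the magic completion algorithm produces a forbidden triangle but which are minimal with this property under substructures. Theorem~\ref{thm:magiccompletion} says that whenever an input $\str G$ admits some completion into $\mathcal A^\delta_{K_1,K_2,C_0,C_1}$, the magic completion succeeds; equivalently, any $\str G$ on which the algorithm fails contains a member of $\mathcal F$ as a substructure, so $\mathcal A^\delta_{K_1,K_2,C_0,C_1}$-completability is characterised by $\Forb(\mathcal F)$ (replacing substructures by homomorphic images is harmless on irreducible targets, as in Proposition~\ref{prop:shortestpath}\ref{prop:shortestpath:1.5}).

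The main obstacle is showing that $\mathcal F$ is finite, i.e.\ bounding the size of a minimal obstruction. I would argue by backtracking the $\delta$-staged magic completion algorithm. If the algorithm produces a forbidden triangle $uvw$ at some stage, then each of its edges was either already in the input $\str G$ or was introduced by a fork $\vec f=(a,b)$ witnessed by a third vertex at an earlier stage, using one of the rules $\mathcal F^+$, $\mathcal F^-$ or $\mathcal F^C$. By the Time Consistency Lemma~\ref{lem:expandtime}, any such witness edge was inserted at a strictly earlier time $t_M$. Replacing each algorithmically added edge of the triangle by its two witness edges and iterating, a minimal obstruction unfolds into a tree of witnesses of depth at most the number of algorithmic stages, namely at most $2\delta-1$. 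Hence the number of vertices and edges of a minimal obstruction is bounded by an explicit function of $\delta$, so $\mathcal F$ is finite up to isomorphism.

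Once (a) and (b) are in place, Corollary~\ref{cor:herwiglascar} applies directly: given $\str A\in \mathcal A^\delta_{K_1,K_2,C_0,C_1}\subseteq \Forb(\mathcal F)$, Theorem~\ref{thm:herwiglascar} yields an EPPA-witness $\str B\in \Forb(\mathcal F)$ for $\str A$, and the automorphism-preserving magic completion of $\str B$ is then a member of $\mathcal A^\delta_{K_1,K_2,C_0,C_1}$ which still extends every partial automorphism of $\str A$. This establishes EPPA for $\mathcal A^\delta_{K_1,K_2,C_0,C_1}$.
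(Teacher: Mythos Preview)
Your proposal is correct and follows essentially the same route as the paper: apply Corollary~\ref{cor:herwiglascar} using Lemma~\ref{lem:aut} for automorphism preservation and the backtracking argument (packaged in the paper as Lemma~\ref{lem:obstacles}) to bound the size of minimal obstructions. One small point: to pass from substructures to homomorphic images the paper does not invoke an analogue of Proposition~\ref{prop:shortestpath}\ref{prop:shortestpath:1.5} but instead uses strong amalgamation to ``unglue'' identified vertices; your remark there is a bit quick, though the conclusion is the same.
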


To prove them, we employ Theorem~\ref{thm:hn} and Corollary~\ref{cor:herwiglascar} respectively. And in order to do that, we need the following two corollaries of Theorem \ref{thm:magiccompletion}.
\begin{lemma}[Finite Obstacles Lemma]
\label{lem:obstacles}
Let $\delta$, $K_1$, $K_2$, $C_0$ and $C_1$ be primitive admissible parameters.
Then the class $\mathcal A^\delta_{K_1,K_2,C_0,C_1}$ has a finite set of obstacles all of which are cycles of diameter at most $2^\delta \cdot 3$.
\end{lemma}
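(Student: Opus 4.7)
The plan is to combine Theorem~\ref{thm:magiccompletion} with a stage-by-stage backtracking through the magic completion algorithm. Suppose $\str{G}=(G,d)\in\mathcal G^\delta$ has no completion into $\Aclass$; by Theorem~\ref{thm:magiccompletion}, the magic completion $\overbar{\str{G}}$ produced by the algorithm is not in $\Aclass$ either, so it must contain a forbidden triangle on some vertices $u,v,w$. The goal is to extract, from $\str{G}$ itself, a homomorphic image of a short cycle that witnesses the non-completability.

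Each of the three edges $uv, vw, uw$ is either already an edge of $\str{G}$ or was introduced by the algorithm at some stage $t_M(x)\geq 1$ via a witness vertex $w'$ and a fork $(a,b)\in\mathbb F_M(x)$. In the latter case, replace the edge by the two fork edges through $w'$; by the Time Consistency Lemma~\ref{lem:expandtime}, both new edges were inserted strictly earlier than stage $t_M(x)$. After one backtracking round the forbidden triangle has become a closed walk of length at most $6$, and each of its edges is either an original edge of $\str{G}$ or arose at a strictly earlier stage. Iterate: at each round the length at most doubles, and once an edge is reached at stage $0$ it is an edge of $\str{G}$ and we leave it alone. Since $t_M$ takes only finitely many values (one for each element of $\{1,\ldots,\delta\}\setminus\{M\}$, plus the final step for edges of length $M$ coming from the original triangle), the process terminates after at most $\delta$ rounds, producing a closed walk in $\str{G}$ of length at most $3\cdot 2^\delta$.

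This closed walk corresponds to a cycle $\str{C}$ with edge labels inherited from the backtracking and a homomorphism $\str{C}\to\str{G}$. By construction, running the magic completion on $\str{C}$ reproduces exactly the same forbidden triangle (the same forks fire at the same stages), so $\str{C}$ itself has no completion into $\Aclass$. Define $\mathcal F$ to be the set of all edge-labelled cycles of length at most $3\cdot 2^\delta$ (with labels in $\{1,\ldots,\delta\}$) that fail to have a completion into $\Aclass$. This set is finite. The argument above gives that every non-completable $\str{G}\in\mathcal G^\delta$ admits a homomorphism from some $\str{F}\in\mathcal F$; conversely, any homomorphism $\str{F}\to\str{G}$ composed with a hypothetical completion $\str{G}\to\str{G}'\in\Aclass$ would yield the forbidden triangle inside the metric space $\str{G}'$, a contradiction.

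The main subtlety is making the backtracking bookkeeping precise: one needs to check that the replacements preserve the ``closed walk'' structure (replacing an edge by a path of length two inside a closed walk keeps it closed), that Lemma~\ref{lem:expandtime} genuinely guarantees strict decrease in stage at every replacement, and that the resulting cycle $\str{C}$ --- when viewed on its own as an edge-labelled graph --- is non-completable and not merely problematic inside $\str{G}$. The latter follows because the magic completion algorithm is local: the forks used in the completion of $\str{C}$ are exactly the ones produced by the backtracking, so the same forbidden triangle appears.
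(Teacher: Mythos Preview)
Your backtracking argument is essentially the paper's: start from the forbidden triangle in $\overbar{\str G}$, replace each algorithm-introduced edge by its witnessing fork, and iterate stage by stage. The bound $3\cdot 2^\delta$ and the appeal to Lemma~\ref{lem:expandtime} match the paper's reasoning, and your claim that the resulting abstract cycle $\str C$ is itself non-completable (because the magic completion of $\str C$ reproduces the same forbidden triangle) is exactly what the paper asserts as well.

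The gap is in your converse direction. You need that if some $\str F\in\mathcal F$ has a homomorphism into $\str G$, then $\str G$ is not completable; equivalently, that no non-completable cycle $\str F$ admits a homomorphism into any $\str G'\in\Aclass$. Your sentence ``composed with a hypothetical completion $\str G\to\str G'\in\Aclass$ would yield the forbidden triangle inside the metric space $\str G'$'' does not establish this. The forbidden triangle lives in the magic completion of the \emph{abstract} cycle $\str F$; its edges are produced by the algorithm running on $\str F$, and there is no reason those edge-lengths agree with the distances between the corresponding (possibly identified) image vertices in $\str G'$. A non-injective homomorphism $\str F\to\str G'$ is not a completion of $\str F$ in the sense of Definition~\ref{defn:completion}, so no contradiction arises directly.

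The paper closes this gap with a short but essential argument using the \emph{strong amalgamation property} of $\Aclass$: given a homomorphism $f\colon\str F\to\str M\in\Aclass$ identifying two vertices $x,y$, one amalgamates two copies of $\str M$ over $\str M\setminus\{f(x)\}$ to ``unglue'' $x$ and $y$, obtaining $\str K\in\Aclass$ into which $\str F$ maps with one fewer identification. Iterating yields an injective homomorphism-embedding of $\str F$ into some member of $\Aclass$, i.e.\ a genuine completion of $\str F$, contradicting $\str F\in\mathcal F$. You should add this step (or an equivalent one) to make the argument complete.
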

\begin{example}
Consider $\mathcal A^5_{3,3,16,13}$ discussed in Section~\ref{sec:algorithm}.
The set of obstacles of this class contains all the forbidden triangles listed
earlier, but in addition to that it also contains some cycles with 4 or more vertices. A complete
list of those can be obtained by running the algorithm backwards from the forbidden
triangles.

All such cycles with 4 vertices can be constructed from the triangles by substituting distances by the forks depicted at Figure~\ref{fig:forks}. This means substituting $2$ for $11$ or $55$, and $4$ for $15$ or $51$. With equivalent cycles removed this gives the following list: 
$$
\begin{array}{rrcl}
\hbox{non-metric:}&124&\implies& 1\mathbf{11}4, 1\mathbf{55}4, 12\mathbf{15}, 12\mathbf{51}\\
&125&\implies& 1\mathbf{11}5, 1\mathbf{55}5^{**}\\
&114&\implies& 11\mathbf{15}^*\\
&225&\implies& \mathbf{11}25^{*}, \mathbf{55}25\\
\hbox{$K_1$-bound:}&122&\implies& 1\mathbf{11}2, 1\mathbf{55}2\\
\hbox{$K_2$-bound:}&144&\implies& 1\mathbf{15}4, 1\mathbf{51}4, 14\mathbf{15}\\
&245&\implies& \mathbf{11}45, \mathbf{55}45, 2\mathbf{15}5^*, 25\mathbf{15}\\
\hbox{$C$-bound:}&445&\implies& \mathbf{15}45, \mathbf{51}45, 4\mathbf{15}5\\
\end{array}
$$
Observe that running the algorithm may produce multiple forbidden triangles which leads
to duplicated cycles in the list.
Such duplicates are denoted by $*$. For example, 125 was expanded to $1\mathbf{11}5$. The algorithm will first notice the fork $(1,5)$ and produce $114$. This is also a forbidden triangle, but a different one.  In the case of $1\mathbf{55}5$ (another expansion of 125) the algorithm will again use the fork $(1,5)$ first and produce the triangles $455$ and $145$, which are valid triangles, see Figure~\ref{fig:1555}. Not all expansions here are necessarily forbidden, because not all of them correspond to a valid run of the algorithm.  However with the exception of cases denoted by $**$ all the above 4-cycles are forbidden.
\begin{figure}[t]
\centering
\includegraphics{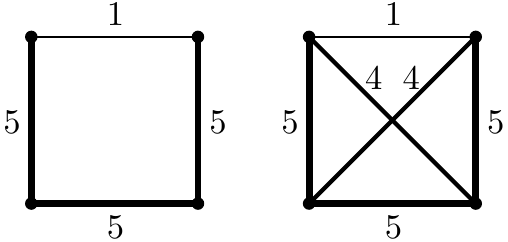}
\caption{Completing the cycle 1555.}
\label{fig:1555}
\end{figure}

Repeating the procedure one obtains the following cycles with five edges that cannot be completed into this class of metric graphs:
 $$11111,\allowbreak 11115,\allowbreak  11155,\allowbreak  11515,\allowbreak  11555,\allowbreak 15155 ,\allowbreak  15555,\allowbreak  55555.$$ 

Because there are no distances 2 or 4 in the cycles with five edges that cannot be completed into this class, it follows that all cycles with at least six edges can be completed.

An example of a failed run of algorithm trying to complete one of the forbidden cycles is depicted in Figure~\ref{fig:11555}.
\begin{figure}
\centering
\includegraphics{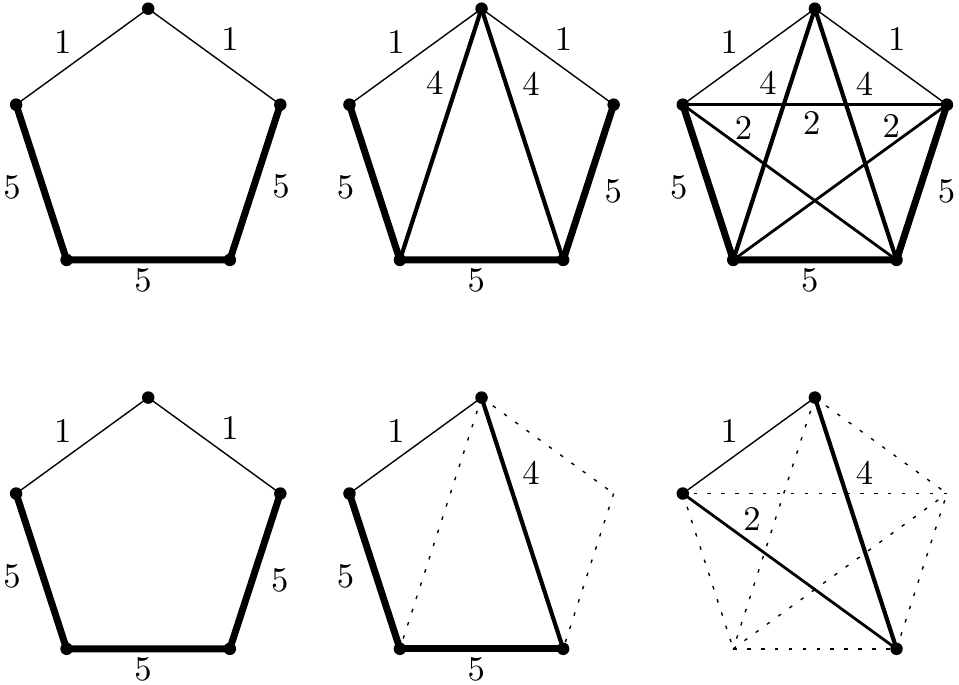}
\caption{Failed run attempting to complete the cycle 11555. In the bottom row is the backward run from the non-metric triangle 124 to the original obstacle used in the proof of Lemma~\ref{lem:obstacles}.}
\label{fig:11555}
\end{figure}
\end{example}
 
\begin{proof}[Proof of Lemma \ref{lem:obstacles}]
Let $\str{G}=(G,d)\in \mathcal G^\delta$ be an edge-labelled graph no completion in
$\mathcal A^\delta_{K_1,K_2,C_0,C_1}$.  We seek a subgraph of $\str{G}$ of bounded size
which has also no completion into $\mathcal A^\delta_{K_1,K_2,C_0,C_1}$.

Consider the sequence of graphs $\str{G}_0, \str{G}_1,\ldots,\str{G}_{2M+1}$ as given by Definition~\ref{defn:magiccompletion}
when completing $\str{G}$ with magic parameter $M$.  Set $\str{G}_{2M+2}$ to be the actual completion.

Because $\str{G}_{2M+2}\notin \mathcal A^\delta_{K_1,K_2,C_0,C_1}$ we know it contains a forbidden triangle $\str{O}$.
By backward induction on $k=2M+1,2M,\ldots, 0$ we obtain cycles $\str{O}_k$
 such that $\str{O}_k$ has no completion in $\mathcal A^\delta_{K_1,K_2,C_0,C_1}$
and there exists a homomorphism $f_k\colon \str{O}_k\to \str{G}_k$. We assume that all $\str{G}_k$ share the vertex set, thus $f_k$ makes sense as a function from $\str{O}_k$ to $\str{G}_{k'}$ for any $k'$ (although it is not necessarily a homomorphism).

Put $\str{O}_{2M+1}=\str{O}$. By Lemma~\ref{lem:misgood} we know that this triangle is also in $\str{G}_{2M+1}$.
At step $k$ we will create $\str{O}_{k}$ by changing $\str{O}_{k+1}$: Consider every edge $u,v$ of $\str{O}_{k+1}$ which is not an edge of $\str{G}_k$ (by this we mean that $f_{k+1}(u)$ and $f_{k+1}(v)$ form an edge in $\str{G}_{k+1}$, but the corresponding vertices do not form an edge of $\str{G}_k$), look at
its witness $w$ (i.e. vertex $w$ in $\str{G}_k$ such that the edges $f_k(u),w$ and $f_k(v),w$ implied the addition of the edge $f_k(u),f_k(v)$ to $\str{G}_{k+1}$) and replace the edge $uv$ in $\str{O}_k$ with the fork $uw'v$, where $w'$ is a new vertex with $d_{\str{O}_{k}}(u,w')=d_{\str{G}_{k+1}}(u,w)$ and $d_{\str{O}_{k}}(v,w')=d_{\str{G}_{k+1}}(v,w)$.
One can verify that the completion algorithm will fail to complete $\str{O}_k$ the same way
as it failed to complete $\str{O}_{k+1}$ and moreover there is a homomorphism $\str{O}_{k+1}\to \str{G}_{k+1}$.

At the end of this procedure we obtain $\str{O}_0$, a subgraph of $\str{G}$, that
has no completion into $\mathcal A^\delta_{K_1,K_2,C_0,C_1}$.
The bound on the size of the cycle follows from the fact that only $\delta$ steps
of the algorithm are actually changing the graph and each time every edge may
introduce at most one additional vertex.

Let $\mathcal O$ consist of all edge-labelled cycles with at most $2^\delta \cdot 3$
vertices that are not completable in $\mathcal A^\delta_{K_1,K_2,C_0,C_1}$. Clearly $\mathcal O$ is finite. To check that $\mathcal O$ is a set of obstacles it remains
to verify that there is no $\str{O}\in \mathcal O$ with a homomorphism
to some $\str{M}\in \mathcal A^\delta_{K_1,K_2,C_0,C_1}$. Denote by $\mathcal{O}'$ the set of all homomorphic images
of structures in $\mathcal{O}$ that are not completable in $\mathcal A^\delta_{K_1,K_2,C_0,C_1}$.
Assume, to the contrary, the existence of such an
$\str{O}=(O,d)\in \mathcal{O}'$ and $\str{M}=(M,d')\in\Aclass$ and a homomorphism $f\colon \str{O}\to\str{M}$ and among all those choose one minimising 
the difference of the numbers of their vertices, $\lvert O\rvert-\lvert M\rvert$. From minimality it follows that $\lvert O\rvert-\lvert M\rvert=1$, hence $f$ is surjective and there is exactly one pair of vertices $x,y\in O$ such that $f(x)=f(y)$.

Let $\str{M}'=(M',d'')$ be the substructure of $\str{M}$, where $M'=M\setminus\{f(x)\}$. And let $\alpha\colon \str M' \rightarrow \str M$ be the inclusion embedding. As $\mathcal A^\delta_{K_1,K_2,C_0,C_1}$ has the strong amalgamation property and both $\str M$ and $\str M'$ are in $\Aclass$, there is $\str K$ and embedding $\beta_1\colon \str M\rightarrow \str K$ and $\beta_2\colon  \str M\rightarrow \str K$, such that $\beta_1\circ\alpha = \beta_2\circ \alpha$ and furthermore $\beta_1$ and $\beta_2$ overlap only by $\alpha(M')$. We can further assume that $K = M'\cup\{x,y\}$ and $\beta_1\circ\alpha$ is an inclusion.

In human terms this means that if one un-glues the vertices $x$ and $y$ of $\str M$, from the strong amalgamation property one gets a structure $\str K\in \Aclass$ which is a completion of $\str M$ with $x$ and $y$ unglued. But then clearly $\str K$ is a completion of $\str O$ in $\Aclass$, which is a contradiction.
\end{proof}

\begin{proof}[Proof of Theorem~\ref{thm:regularramsey}]
Ramsey property follows by a combination of Theorem~\ref{thm:hn} and Lemma~\ref{lem:obstacles} in the same way as it does for metric spaces in Corollary~\ref{cor:metricareramsey}.

To show the expansion property we use the now standard argument that edge-Ramsey implies ordering property~\cite{Nevsetvril1996,Jasinski2013}:
Given a metric space $\str{A}\in \mathcal A^{\delta}_{K_1,K_2,C_0,C_1}$ construct an ordered metric space $\overrightarrow{\str{B}}_0\in \overrightarrow{\mathcal A}^\delta_{K_1,K_2,C_0,C_1}$ as a disjoint union of all
possible linear orderings of $\str{A}$. Now consider every pair of
vertices $u < v$, $d(u,v)\neq M$ and add third vertex $w$ in distance $M$ from both $u$ and $v$ with the order extended in a way so
$u < w < v$ holds. Because $M$ is magic, by Observation~\ref{obs:magicismagic}, all new triangles are allowed and thus it is possible to complete this structure to an ordered metric space $\overrightarrow{\str{B}}_1\in \overrightarrow{\mathcal A}^\delta_{K_1,K_2,C_0,C_1}$.
Now denote by $\overrightarrow{\str{E}}$ the ordered metric space consisting of two vertices in distance $M$ and construct
$$\overrightarrow{\str{B}}\longrightarrow(\overrightarrow{\str{B}_1})^{\overrightarrow{\str{E}}}_2.$$

We claim that $\str{B}$ (the unordered reduct of $\overrightarrow{\str{B}}$) has the property that every ordering of $\str{B}$
contains every ordering of $\str{A}$.  Denote by $\leq$ the order of $\overrightarrow{\str{B}}$ and choose an arbitrary linear order order $\leq'$ of vertices of $\str{B}$. $\leq'$ implies two-coloring of copies of $\overrightarrow{\str{E}}$
in $\overrightarrow{\str{B}}$: color a copy red if both orders agree and blue otherwise.  Because $\overrightarrow{\str{B}}$ is Ramsey, we obtain a monochromatic
copy of $\overrightarrow{\str{B}}_1$ which contains a copy of $\overrightarrow{\str{B}}_0$ with the property that $\leq'$ restriced to this copy either agrees either with $\leq$ or with $\geq$.
In both cases we obtain a copy of every ordering of $\str{A}$ within this copy of $\overrightarrow{\str{B}}_0$.
\end{proof}

\begin{proof}[Proof of Theorem~\ref{thm:regulareppa}]
Follows from Corrolary~\ref{cor:herwiglascar}, Lemma~\ref{lem:obstacles} and Lemma~\ref{lem:aut}.
\end{proof}

\chapter{Conclusion}\label{ch:conclusion}
We found Ramsey expansions with the expansion property of all the primitive 3-constrained classes from Cherlin's catalogue of metrically homogeneous graphs. This was done by devising an explicit way to fill in the missing distances in $\delta$-edge-labelled graphs to obtain Cherlin's metric spaces. This method also implies EPPA. (It is one of the first times when EPPA and Ramseyness were proved at the same time.) This result is a contribution to the Ne\v set\v ril classification programme of Ramsey classes and also gives more insight into the catalogue.

To our surprise, the techniques (in particular the magic completion algorithm) turned out to be very flexible and generalise to many more classes then just the primitive 3-constrained ones:

\section{The rest of the catalogue}
In addition to the primitive classes, Cherlin's catalogue also contains:
\begin{description}
\item[Bipartite classes:] Bipartite classes are an extremal variant of the primitive 3-constrained $\Aclass$ where all odd triangles are forbidden, which one can model for example by setting $K_1 = \infty$.
\item[Antipodal classes:] These are another extremal variant of $\Aclass$ where $C=2\delta+1$ and hence all triangles with two edges of length $\delta$ are forbidden. This means that edges of length $\delta$ form a matching. Antipodal classes can also be bipartite.
\item[Infinite diameter:] Bipartite and primive classes can also have $\delta=\infty$, but they are Ramsey and have EPPA by a simple reduction to the finite-diameter variants.
\item[Tree-like structures:] The tree-like structures are metric spaces of infinite diameter where the edges of length one form complete graphs such that each complete graph has size at most $m$, each vertex is in at most $n$ such cliques and each vertex is a cutvertex.
\end{description}

Besides this, one can also observe that the magic completion never introduces edges of length $\delta$ and $1$ (unless $M=\delta$ and $C=2\delta+2$ respectively) and thus, in the primitive cases, one can also further forbid any set of finite metric spaces containing only distances 1 and $\delta$ --- Cherlin calls these \emph{Henson's constraints}.

Throughout the thesis, we mentioned several times that the primitive 3-constrained classes are ``core'', ``key'' etc. It probably deserves a proper justification, which we can now give. For a precise statement of the results (which we just sketch in the following paragraphs) see~\cite{Aranda2017}.

Initially when we started on this project, the goal was to understand the primitive 3-constrained cases, because the other cases intuitively seemed to be extremal variants. Later it turned out, rather surprisingly, that the magic completion algorithm quite naturally extends to all the other non-tree-like cases.

In the bipartite cases one needs to make sure not to introduce any odd cycles. And if one is careful, the only step in which odd cycles could be introduced is the last one, when adding distances $M$. Our solution is to have a magic pair of distances, $M$ and $M+1$ and add them in the last step according to whether the edge goes across the bipartitions or inside them. The Ramsey expansion is slightly more complex (although standard), as one needs to distinguish the bipartitions (by, say, two unary relations) and only take orders \emph{convex} with respect to the bipartitions. The proofs for the bipartite magic completion algorithm are direct analogues of the proofs for the magic completion algorithm and they are often much simpler and shorter.

The antipodal spaces contain a matching; for each vertex there is at most one vertex in distance $\delta$ from it (in the \Fraisse{} limit exactly one). But one can without doing any harm assume that each vertex has exactly one in distance $\delta$ from it. Furthermore, in these cases it holds that if one has two pairs of vertices in distance $\delta$, then a single distance between the pairs determines all three other distances. So again, one can assume that if one distance is defined between two edges of length $\delta$, then all four are defined. Thus one can pick one representant from each $\delta$-pair and this induces a $(\delta-1)$-edge-labelled graph where the magic completion algorithm can be used. The Ramsey expansion is slightly more complex (besides other things, one needs to fix the choice of the representants by for example expanding the language by a unary mark which selects exactly one vertex from each edge of length $\delta$), but again standard.

The bipartite antipodal spaces need a mixture of both techniques.

While for the Ramsey property everything is settled, for EPPA this is not the case, because for certain antipodal classes it is not known whether they have EPPA\footnote{They do have EPPA when expanded by unary marks for the representants.}: To use the magic completion algorithm, we needed to pick a representant of each $\delta$-pair. And sometimes different choices lead to different completions, which are not automorphism preserving. The smallest $\delta$ for which this happens is $\delta=3$ and the class of all finite antipodal Cherlin's spaces of diameter $3$ is bi-interpretable with \emph{two-graphs} (triple systems such that on every four vertices there is an even number of triples, they have been studied since the 1960's~\cite{Seidel1973}). The question whether two-graphs have EPPA has been asked by Macpherson and also appears in~\cite{Siniora2}.

\section{Explicit description of forbidden cycles}
Lemma~\ref{lem:obstacles} which we proved in Chapter~\ref{ch:ramseyeppa} states that there is a finite family $\mathcal F^\delta_{K_1, K_2, C_0, C_1}$ of $\delta$-edge-labelled cycles such that a $\delta$-edge-labelled graph $\str G$ has a completion in $\Aclass$ if and only if $\str G\in \Forb\left(\mathcal F^\delta_{K_1, K_2, C_0, C_1}\right)$. However, it did not give an explicit description of $\mathcal F^\delta_{K_1, K_2, C_0, C_1}$. In~\cite{Coulson2018}, Coulson, Hubi\v cka, Kompatscher and the author prove that $\mathcal F^\delta_{K_1, K_2, C_0, C_1}$ is the union of the following cycles:
\begin{description}
\item[Non-metric cycles:] Cycles with edges $a, x_1, x_2, \ldots, x_k$ such that $$a > \sum_i x_i.$$
\item[$C_0$-cycles:] Cycles of even perimeter with distances $d_0, d_1, \ldots, d_{2n}, x_1, \ldots, x_k$ for some $n\geq 0$ and $k\geq 0$ such that
$$\sum d_i > n(C_0 - 1) + \sum x_i.$$
\item[$C_1$-cycles:] Cycles of odd perimeter with distances $d_0, d_1, \ldots, d_{2n}, x_1, \ldots, x_k$ for some $n\geq 0$ and $k\geq 0$ such that
$$\sum d_i > n(C_1 - 1) + \sum x_i.$$
\item[$K_1$-cycles:] \emph{Metric} cycles of odd perimeter with distances $x_1, \ldots, x_k$ such that $$2K_1 > \sum_i x_i.$$
\item[$K_2$-cycles:] Cycles of odd perimeter with distances $d_1, \ldots, d_{2n+2}, x_1, \ldots, x_k$ such that $$\sum d_i > n(C-1) + 2K_2 + \sum x_i.$$
\end{description}
Note that the non-metric cycles are precisely the union of $C_0$- and $C_1$-cycles for $n=0$.

\section{Future work}
There are still some open questions remaining and worth investigating. We have already mentioned the question whether two-graphs have EPPA. It seems like a difficult question: The personal opinion of the author (shared by others) is that two-graphs probably do not have EPPA. And there are not many techniques to show that a class does not have EPPA. As EPPA implies that the automorphism group of the \Fraisse{} limit is amenable, one possibility is to prove non-amenability. But in Section~9.2 of~\cite{Aranda2017} we show that our Ramsey expansions are ``regular enough'' to actually give amenability for the non-expanded structures.

In~\cite{Aranda2017} we also show that the families of forbidden substructures for the antipodal and bipartite cases are ``nicely behaved'' (for the bipartite cases all odd cycles are obviously forbidden and there are infinitely many of them, but this is the only complication). It would make sense to extend the explicit list of forbidden cycles also to these cases.

Tent and Ziegler~\cite{Tent2013, Tent2013b} proved that the automorphism group of the bounded Urysohn space (distances from the closed interval $[0,1]$) is simple and that the automorphism group of the Urysohn space has only one normal subgroup. Their proof heavily relies on the properties of the shortest path completion and there is hope that one might be able to amend it to work also for the primitive 3-constrained cases. First steps towards this were already taken by Li~\cite{Li2018}, she proves simplicity for classes of diameter at most 4.


 \bibliographystyle{alpha}

\renewcommand{\bibname}{Bibliography}


\bibliography{./bibliography.bib}

\newcommand{\etalchar}[1]{$^{#1}$}
\begin{thebibliography}{ABWH{\etalchar{+}}17b}

\bibitem[ABWH{\etalchar{+}}17a]{Aranda2017a}
Andres Aranda, David Bradley-Williams, Eng~Keat Hng, Jan Hubi{\v c}ka,
  Miltiadis Karamanlis, Michael Kompatscher, Mat{\v e}j Kone{\v c}n{\'y}, and
  Micheal Pawliuk.
\newblock Completing graphs to metric spaces.
\newblock {\em Electronic Notes in Discrete Mathematics}, 61:53 -- 60, 2017.
\newblock The European Conference on Combinatorics, Graph Theory and
  Applications (EUROCOMB'17).

\bibitem[ABWH{\etalchar{+}}17b]{Aranda2017c}
Andres Aranda, David Bradley-Williams, Eng~Keat Hng, Jan Hubi{\v c}ka,
  Miltiadis Karamanlis, Michael Kompatscher, Mat{\v e}j Kone{\v c}n{\'y}, and
  Micheal Pawliuk.
\newblock Completing graphs to metric spaces.
\newblock Submitted, arXiv:1706.00295, 2017.

\bibitem[ABWH{\etalchar{+}}17c]{Aranda2017}
Andres Aranda, David Bradley-Williams, Jan Hubi{\v c}ka, Miltiadis Karamanlis,
  Michael Kompatscher, Mat{\v e}j Kone{\v c}n{\'y}, and Micheal Pawliuk.
\newblock Ramsey expansions of metrically homogeneous graphs.
\newblock 2017.
\newblock Submitted, arXiv:1707.02612.

\bibitem[AH78]{Abramson1978}
Fred~G. Abramson and Leo~A. Harrington.
\newblock Models without indiscernibles.
\newblock {\em Journal of Symbolic Logic}, 43:572--600, 1978.

\bibitem[Bod15]{Bodirsky2015}
Manuel Bodirsky.
\newblock Ramsey classes: Examples and constructions.
\newblock {\em Surveys in Combinatorics 2015}, 424:1, 2015.

\bibitem[BPT11]{Bodirsky2011a}
Manuel Bodirsky, Michael Pinsker, and Todor Tsankov.
\newblock Decidability of definability.
\newblock In {\em Proceedings of the 2011 IEEE 26th Annual Symposium on Logic
  in Computer Science}, LICS '11, pages 321--328, Washington, DC, USA, 2011.
  IEEE Computer Society.

\bibitem[Cam99]{Cameron1999}
Peter~J. Cameron.
\newblock {\em Oligomorphic permutation groups}, volume~45 of {\em London
  Mathematical Society Student Texts}.
\newblock Cambridge University Press, 1999.

\bibitem[Che98]{Cherlin1998}
Gregory Cherlin.
\newblock {\em The Classification of Countable Homogeneous Directed Graphs and
  Countable Homogeneous N-tournaments}.
\newblock Number 621 in Memoirs of the American Mathematical Society. American
  Mathematical Society, 1998.

\bibitem[Che16]{Cherlin2013}
Gregory Cherlin.
\newblock Homogeneous ordered graphs and metrically homogeneous graphs.
\newblock In preparation, 2016.

\bibitem[CHKK18]{Coulson2018}
Rebecca Coulson, Jan Hubi{\v c}ka, Michael Kompatscher, and Mat{\v e}j Kone{\v
  c}n{\'y}.
\newblock Forbidden cycles in metrically homogeneous graphs.
\newblock in preparation, 2018.

\bibitem[Cou17]{Coulson}
Rebecca Coulson.
\newblock {\em Metrically Homogeneous Graphs: Twisted Isomorphisms, Twisted
  Automorphisms, and Properties of their Automorphism Groups}.
\newblock PhD thesis, Rutgers Mathematics Department, 2017.

\bibitem[EHN17]{Evans2}
David~M. Evans, Jan Hubi{\v c}ka, and Jaroslav Ne{\v{s}}et{\v{r}}il.
\newblock Automorphism groups and {R}amsey properties of sparse graphs.
\newblock Submitted, arXiv:1801.01165, 56 pages, 2017.

\bibitem[Fra53]{Fraisse1953}
Roland Fra{\"\i}ss{\'e}.
\newblock Sur certaines relations qui g\'en\'eralisent l'ordre des nombres
  rationnels.
\newblock {\em Comptes Rendus de l'Academie des Sciences}, 237:540--542, 1953.

\bibitem[Fra86]{Fraisse1986}
Roland Fra{\"\i}ss{\'e}.
\newblock {\em Theory of relations}.
\newblock Studies in logic and the foundations of mathematics. North-Holland,
  1986.

\bibitem[GGL95]{Graham1995}
Roland~L. Graham, M.~Grotschel, and L{\'a}szl{\'o} Lov{\'a}sz.
\newblock {\em Handbook of combinatorics}, volume~1.
\newblock North Holland, 1995.

\bibitem[Hal59]{Hall1959}
Philip Hall.
\newblock {S}ome {C}onstructions for {L}ocally {F}inite {G}roups.
\newblock {\em Journal of the London Mathematical Society}, s1-34(3):305--319,
  1959.

\bibitem[Hen72]{Henson1972}
C.~Ward Henson.
\newblock {C}ountable {H}omogeneous {R}elational {S}tructures and
  $\aleph_0$-{C}ategorical {T}heories.
\newblock {\em J. Symbolic Logic}, 37(3):494--500, 09 1972.

\bibitem[HL00]{herwig2000}
Bernhard Herwig and Daniel Lascar.
\newblock Extending partial automorphisms and the profinite topology on free
  groups.
\newblock {\em Transactions of the American Mathematical Society},
  352(5):1985--2021, 2000.

\bibitem[HN16]{Hubicka2016}
Jan Hubi{\v{c}}ka and Jaroslav Ne\v{s}et\v{r}il.
\newblock All those {R}amsey classes ({R}amsey classes with closures and
  forbidden homomorphisms).
\newblock Submitted, arXiv:1606.07979, 58 pages, 2016.

\bibitem[Hod93]{Hodges1993}
Wilfrid Hodges.
\newblock {\em Model theory}, volume~42.
\newblock Cambridge University Press, 1993.

\bibitem[Hru92]{hrushovski1992}
Ehud Hrushovski.
\newblock Extending partial isomorphisms of graphs.
\newblock {\em Combinatorica}, 12(4):411--416, 1992.

\bibitem[Hus08]{HusekUrysohn2008}
Miroslav Husek.
\newblock Urysohn universal space, its development and {H}ausdorff's approach.
\newblock {\em Topology and its Applications}, 155(14):1493 -- 1501, 2008.
\newblock Special Issue: Workshop on the Urysohn space.

\bibitem[JLNVTW14]{Jasinski2013}
Jakub Jasi{\'n}ski, Claude Laflamme, Lionel Nguyen Van~Th{\'e}, and Robert
  Woodrow.
\newblock Ramsey precompact expansions of homogeneous directed graphs.
\newblock {\em The Electronic Journal of Combinatorics}, 21, 2014.

\bibitem[KPT05]{Kechris2005}
Alexander~S. Kechris, Vladimir~G. Pestov, and Stevo Todor{\v c}evi{\' c}.
\newblock Fra{\"\i}ss{\'e} limits, {R}amsey theory, and topological dynamics of
  automorphism groups.
\newblock {\em Geometric and Functional Analysis}, 15(1):106--189, 2005.

\bibitem[Li18]{Li2018}
Yibei Li.
\newblock Simplicity of the automorphism groups of some binary homogeneous
  structures determined by triangle constraints.
\newblock in preparation, 2018.

\bibitem[LW80]{Lachlan1980}
Alistair~H. Lachlan and Robert~E. Woodrow.
\newblock Countable ultrahomogeneous undirected graphs.
\newblock {\em Transactions of the American Mathematical Society}, pages
  51--94, 1980.

\bibitem[Mac11]{Macpherson2011}
Dugald Macpherson.
\newblock A survey of homogeneous structures.
\newblock {\em Discrete Mathematics}, 311(15):1599 -- 1634, 2011.
\newblock Infinite Graphs: Introductions, Connections, Surveys.

\bibitem[Ma{\v{s}}17]{Masulovic2017metric}
Dragan Ma{\v{s}}ulovi{\'{c}}.
\newblock {Dual Ramsey theorems for relational structures}.
\newblock {\em arXiv:1707.09544}, July 2017.

\bibitem[Ne{\v{s}}89]{Nevsetvril1989a}
Jaroslav Ne{\v{s}}et{\v{r}}il.
\newblock For graphs there are only four types of hereditary {R}amsey classes.
\newblock {\em Journal of Combinatorial Theory, Series B}, 46(2):127--132,
  1989.

\bibitem[Ne{\v{s}}95]{Nevsetvril1996}
Jaroslav Ne{\v{s}}et{\v{r}}il.
\newblock Ramsey theory.
\newblock In R.~L. Graham, M.~Gr\"{o}tschel, and L.~Lov\'{a}sz, editors, {\em
  Handbook of Combinatorics}, volume~2, pages 1331--1403. MIT Press, Cambridge,
  MA, USA, 1995.

\bibitem[Ne{\v{s}}05]{Nevsetril2005}
Jaroslav Ne{\v{s}}etril.
\newblock Ramsey {C}lasses and {H}omogeneous {S}tructures.
\newblock {\em Combinatorics, probability and computing}, 14(1-2):171--189,
  2005.

\bibitem[Ne{\v{s}}07]{Nevsetvril2007}
Jaroslav Ne{\v{s}}et{\v{r}}il.
\newblock Metric spaces are {R}amsey.
\newblock {\em European Journal of Combinatorics}, 28(1):457--468, 2007.

\bibitem[NR75]{Nesetril1975}
Jaroslav Ne\v{s}et\v{r}il and Vojt{\v{e}}ch R{\"o}dl.
\newblock Partitions of subgraphs.
\newblock In Miroslav Fiedler, editor, {\em Recent advances in graph theory},
  pages 413--423. Prague, 1975.

\bibitem[NR77a]{Nevsetvril1977}
Jaroslav Ne{\v{s}}et{\v{r}}il and Vojt{\v{e}}ch R{\"o}dl.
\newblock A structural generalization of the {R}amsey theorem.
\newblock {\em Bulletin of the American Mathematical Society}, 83(1):127--128,
  1977.

\bibitem[NR77b]{Nevsetvril1977b}
Jaroslav Ne\v{s}et\v{r}il and Vojt{\v{e}}ch R{\"o}dl.
\newblock Partitions of finite relational and set systems.
\newblock {\em Journal Combinatorial Theory, Series A}, 22(3):289--312, 1977.

\bibitem[NVT13]{The2013}
Lionel Nguyen Van~Th{\'e}.
\newblock More on the {K}echris--{P}estov--{T}odorcevic correspondence:
  Precompact expansions.
\newblock {\em Fundamenta Mathematicae}, 222:19--47, 2013.

\bibitem[NVT15]{NVT14}
Lionel Nguyen Van~Th{\'e}.
\newblock {A survey on structural {R}amsey theory and topological dynamics with
  the {K}echris-{P}estov-{T}odorcevic correspondence in mind}.
\newblock {\em Selected Topics in Combinatorial Analysis}, 17(25):189--207,
  2015.

\bibitem[Ram30]{Ramsey1930}
Frank~Plumpton Ramsey.
\newblock On a {P}roblem of {F}ormal {L}ogic.
\newblock {\em Proceedings of the London Mathematical Society},
  s2-30(1):264--286, 1930.

\bibitem[Sch79]{Schmerl1979}
James~H. Schmerl.
\newblock Countable homogeneous partially ordered sets.
\newblock {\em algebra universalis}, 9(1):317--321, Dec 1979.

\bibitem[Sei73]{Seidel1973}
Johan~Jacob Seidel.
\newblock A survey of two-graphs.
\newblock {\em Colloquio Internazionale sulle Teorie Combinatorie (Rome,
  1973)}, 1:481--511, 1973.

\bibitem[Sin17]{Siniora2}
Daoud Siniora.
\newblock {\em Automorphism Groups of Homogeneous Structures}.
\newblock PhD thesis, University of Leeds, March 2017.

\bibitem[Sok17]{Sokic2017}
Miodrag Soki{\'c}.
\newblock Diameter 3.
\newblock Preprint, 2017.

\bibitem[Sol05]{solecki2005}
S{\l}awomir Solecki.
\newblock Extending partial isometries.
\newblock {\em Israel Journal of Mathematics}, 150(1):315--331, 2005.

\bibitem[Sol13]{Solecki2013}
S{\l}awomir Solecki.
\newblock Abstract approach to finite {R}amsey theory and a self-dual {R}amsey
  theorem.
\newblock {\em Advances in Mathematics}, 248:1156--1198, 2013.

\bibitem[TZ13a]{Tent2013b}
Katrin Tent and Martin Ziegler.
\newblock The isometry group of the bounded urysohn space is simple.
\newblock {\em Bulletin of the London Mathematical Society}, 45(5):1026--1030,
  2013.

\bibitem[TZ13b]{Tent2013}
Katrin Tent and Martin Ziegler.
\newblock On the isometry group of the {U}rysohn space.
\newblock {\em Journal of the London Mathematical Society}, 87(1):289--303,
  2013.

\bibitem[Ury27]{Urysohn1927}
Paul~S. Urysohn.
\newblock Sur un espace m{\'e}trique universel.
\newblock {\em Bulletin des Sciences Math{\'{e}}matiques}, 51(2):43--64, 1927.

\bibitem[Ver08]{vershik2008}
Anatoly~M. Vershik.
\newblock Globalization of the partial isometries of metric spaces and local
  approximation of the group of isometries of {U}rysohn space.
\newblock {\em Topology and its Applications}, 155(14):1618--1626, 2008.

\end{thebibliography}

\openright
\end{document}